\def\thesection{\arabic{section}}
\def\theequation{\thesection.\arabic{equation}}
\newcommand{\fa} {\forall}
\newcommand{\ds} {\displaystyle}
\newcommand{\e}{\epsilon}
\newcommand{\al} {\alpha}
\newcommand{\ba} {\beta}
\newcommand{\de} {\delta}
\newcommand{\ga} {\gamma}
\newcommand{\Om} {\Omega}
\newcommand{\ra} {\rightarrow}
\newcommand{\De} {\Delta}
\newcommand{\la} {\lambda}
\newcommand{\La} {\Lambda}
\newcommand{\noi} {\noindent}
\newcommand{\na} {\nabla}
\newcommand{\oline} {\overline}
\newcommand{\mb} {\mathbb}
\newcommand{\mc} {\mathcal}
\newcommand{\lra} {\longrightarrow}
\newcommand{\ld} {\langle}
\newcommand{\rd} {\rangle}
\def\theequation{\@arabic{\c@section}.\@arabic{\c@equation}}
\def\proof{\noindent{\textbf{Proof. }}}
\def\QED{\hfill {$\square$}\goodbreak \medskip}
\newtheorem{Theorem}{Theorem}[section]
\newtheorem{Lemma}[Theorem]{Lemma}
\newtheorem{Proposition}[Theorem]{Proposition}
\newtheorem{Definition}[Theorem]{Definition}
\begin{document}

\title
{$n$-Kirchhoff type equations with exponential nonlinearities}

\author{
{\bf   \; Sarika Goyal\footnote{email: sarika1.iitd@gmail.com}},\; {\bf Pawan Kumar Mishra\footnote{email: pawanmishra31284@gmail.com,}}\; and {\bf  K. Sreenadh\footnote{e-mail: sreenadh@gmail.com}}\\
{\small Department of Mathematics}, \\{\small Indian Institute of Technology Delhi}\\
{\small Hauz Khas}, {\small New Delhi-16, India}\\
 }

\date{}

\maketitle

\begin{abstract}

In this article, we study the existence of
non-negative solutions of the class of non-local problem of $n$-Kirchhoff type
$$  \left\{
\begin{array}{lr}
 \quad - m(\int_{\Om}|\na u|^n)\De_n u = f(x,u) \; \text{in}\;
\Om,\quad u =0\quad\text{on} \quad \partial \Om,
\end{array}
\right.
$$
where $\Om\subset \mb R^n$ is a bounded domain with smooth boundary, $n\geq 2$ and $f$ behaves like $e^{|u|^{\frac{n}{n-1}}}$ as $|u|\ra\infty$. Moreover, by minimization on the suitable subset of the Nehari manifold, we study the existence and multiplicity of solutions, when $f(x,t)$ is concave near $t=0$ and convex as $t\rightarrow \infty.$

\medskip

\noi \textbf{Key words:} Kirchhoff equation,
Trudinger-Moser embedding, sign-changing weight function.

\medskip

\noi \textit{2010 Mathematics Subject Classification:} 35J35, 35J60,
35J92

\end{abstract}

\bigskip
\vfill\eject

\section{Introduction}
\setcounter{equation}{0}
The aim of this article is to study the existence of positive solutions of following $n$-Kirchhoff type equation
$$ \mc{( M)}\quad \left\{
\begin{array}{rllll}
 -m(\int_{\Om} |\na u|^n)\Delta_n u&=f(x,u) \; \text{in}\;  \Omega,\\
  u&=0\; \text{on}\;  \partial \Omega,
\end{array}\right.$$
where $\Omega\subset \mb R^n$ is a bounded domain with smooth boundary,
$m:\mb R^+\rightarrow \mb R^+$ and $f:\Omega\times\mb R\rightarrow \mb R$ are continuous functions that satisfy some conditions which will be stated later on.\\

\noi We also study the existence of non-negative solutions of the following $n$-Kirchhoff problem
$$ (\mc P_{\la,M})\quad \left\{
\begin{array}{rllll}
 \quad  -m(\int_{\Om}|\na u|^n)\De_n u &= \la h(x)|u|^{q-1}u+ u|u|^{p}~ e^{|u|^{\ba}} \; \text{in}\;
\Om \\
  u &=0\quad \text{on} \quad \partial \Om,
\end{array}
\right.
$$
where $\Om\subset \mb R^n$ is a bounded domain with smooth boundary, $n\geq 2$, $0< q<n-1<2n-1 <
p+1$, $\ba\in \left(1,\frac{n}{n-1}\right]$ and $\la>0$. By
minimization on the suitable subset of the Nehari manifold we show the existence and multiplicity of solutions with respect to the parameter $\la$.\\

\noi The above problems are called non-local because of the presence of the term $m(\int_\Om |\na u|^n)$ which implies that the equations
in $(\mc M)$ and $(\mc P_{\la, M})$ are no longer a pointwise identity. This phenomenon causes some mathematical difficulties which makes the study
of such a class of problem interesting. Basically, the presence of $\int_\Om |\na u|^n$ as the coefficient of $\int_{\Om} |\na u|^{n-2}\na u \na \phi$ in the weak formulation makes the study of compactness of Palais-Smale sequences difficult. The study of elliptic equations with exponential growth nonlinearities are motivated by
the following Trudinger-Moser inequality  \cite{moser}, namely
\begin{Theorem} For $n\geq2$, $u\in W^{1,n}_{0}(\Om)$
\begin{align}\label{eqc002}
\sup_{\| u\| \leq 1}\int_{\Om} e^{\al |u|^{\frac{n}{n-1}}} dx
<\infty
\end{align}
if and only if ${\al}\leq {\al_n}$, where $\al_n = n
w_{n-1}^{\frac{1}{n-1}}$, $w_{n-1}=$ volume of $\mb S^{n-1}$.
\end{Theorem}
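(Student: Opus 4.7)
The plan is to follow Moser's classical argument in four stages: symmetrize to reduce to radial decreasing profiles on a ball, convert the radial problem to a one-dimensional problem on a half-line via a logarithmic change of variable, prove a sharp one-dimensional exponential estimate, and finally exhibit an extremal (Moser) sequence to show $\al_n$ is optimal. For the first stage, given $u\in W^{1,n}_0(\Om)$ with $\|\na u\|_n\leq 1$, let $u^*$ denote the Schwarz symmetrization of $|u|$, defined on the ball $B_R\subset \R^n$ with $|B_R|=|\Om|$. The Polya--Szego inequality gives $\|\na u^*\|_n\leq \|\na u\|_n\leq 1$, while equimeasurability gives $\int_\Om e^{\al |u|^{n/(n-1)}}\,dx = \int_{B_R} e^{\al (u^*)^{n/(n-1)}}\,dx$. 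So it suffices to establish the supremum bound for nonnegative, radial, decreasing $u$ on $B_R$.

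Next, apply the change of variables $r=Re^{-t/n}$, $t\in[0,\infty)$, and set $\phi(t)=c_n\, u(Re^{-t/n})$ with $c_n=w_{n-1}^{1/n} n^{(n-1)/n}$. A direct computation gives $\phi(0)=0$,
\[\int_0^\infty |\phi'(t)|^n\,dt = \int_{B_R}|\na u|^n\,dx \leq 1 \quad\text{and}\quad \int_{B_R} e^{\al_n |u|^{n/(n-1)}}\,dx = |\Om|\int_0^\infty e^{|\phi(t)|^{n/(n-1)}-t}\,dt,\]
using the identity $c_n^{n/(n-1)}=\al_n$. For $\al\leq \al_n$ the analogous identity holds with exponent $(\al/\al_n)|\phi|^{n/(n-1)}-t$, which is pointwise no larger than $|\phi|^{n/(n-1)}-t$.

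The main obstacle is the resulting one-dimensional lemma: if $\phi$ is absolutely continuous on $[0,\infty)$, $\phi(0)=0$ and $\int_0^\infty|\phi'|^n\,dt\leq 1$, then
\[\int_0^\infty e^{|\phi(t)|^{n/(n-1)}-t}\,dt \leq C_n < \infty.\]
Holder's inequality immediately yields $|\phi(t)|\leq t^{(n-1)/n}$, so the exponent is pointwise non-positive; the delicate point is that this pointwise bound alone does not give integrability, since the integrand could still be of order one on a set of infinite measure. The idea is to decompose $[0,\infty)$ according to the size of $|\phi(t)|^{n/(n-1)}/t$, control the measure of each level set through a distribution function estimate that uses the full $L^n$ bound on $\phi'$, and then sum a geometric series. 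This is precisely the step where the sharp constant $\al_n$ is genuinely used; the earlier reductions are essentially bookkeeping.

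Finally, sharpness for $\al>\al_n$ is established by the explicit Moser sequence: for $k\geq 1$ take the radial function on $B_1$ given by $M_k(x)=c_n^{-1}(\log k)^{(n-1)/n}$ for $|x|\leq 1/k$ and $M_k(x)=c_n^{-1}\log(1/|x|)/(\log k)^{1/n}$ for $1/k\leq |x|\leq 1$ (extended by zero). A short calculation shows $\|\na M_k\|_n=1$, and the contribution from the plateau $|x|\leq 1/k$ alone drives $\int_{B_1}e^{\al|M_k|^{n/(n-1)}}\,dx\to\infty$ whenever $\al>\al_n$. Thus the threshold $\al_n$ is optimal, and the whole difficulty of the theorem has been compressed into the critical one-dimensional estimate of the previous paragraph.
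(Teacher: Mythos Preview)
The paper does not actually prove this theorem; it is stated as the classical Trudinger--Moser inequality and attributed to Moser \cite{moser}, serving only as background for the existence results that follow. So there is no ``paper's own proof'' to compare against.

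Your proposal follows Moser's original 1971 argument faithfully: symmetrization via Polya--Szeg\H{o}, the logarithmic change of variable to reduce to a one-dimensional problem, the sharp one-dimensional exponential lemma, and the Moser sequence for optimality. The reductions in the first two stages and the sharpness argument are correct as written. The one genuine lacuna is the third stage: you correctly isolate the one-dimensional lemma as the heart of the matter and correctly note that the pointwise bound $|\phi(t)|^{n/(n-1)}\le t$ alone does not suffice, but the sketch you give (``decompose according to the size of $|\phi(t)|^{n/(n-1)}/t$, control level sets through a distribution function estimate, sum a geometric series'') is not yet a proof. Moser's actual argument splits the integral at a point $T$ where $\int_0^T|\phi'|^n$ and $\int_T^\infty|\phi'|^n$ are balanced, uses the refined H\"older bound $|\phi(t)|\le |\phi(T)|+(t-T)^{(n-1)/n}\big(\int_T^\infty|\phi'|^n\big)^{1/n}$ on $[T,\infty)$, and then an elementary inequality to show the exponent is bounded above by $-c(t-T)$ for some $c>0$. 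If you fill in this step with that device (or Adams' rearrangement-of-kernels variant), the proof is complete; as it stands, the crucial estimate is asserted rather than established.
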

 The embedding $W_{0}^{1,n}(\Om) \ni u\longmapsto e^{|u|^{\ba}} \in
L^{1}(\Om)$ is compact for all $\ba\in\left(1,\frac{n}{n-1}\right)$
and is continuous for $\ba=\frac{n}{n-1}$. The non-compactness of
the embedding can be shown using a sequence of functions that are
truncations and dilations of fundamental solution of $-\De_n$ on
$W^{1,n}_{0}(\Om)$. The existence results for quasilinear problems
with exponential terms on bounded domains was initiated and studied
by Adimurthi \cite{A}.\\

\noi Starting from the pioneering works of Tarantello \cite{TA} and
Ambrosetti-Brezis-Cerami \cite{ABC}, a lot of work has been done to
address the multiplicity of positive solutions for semilinear and
quasilinear elliptic problems with positive nonlinearities.
Recently, many works are devoted to the study of these multiplicity
results with polynomial type nonlinearity with sign-changing weight
functions using the Nehari manifold and fibering map analysis (see
refs.\cite{TA,DP,WU,WU9,WU10,WUFI,COA, AE}). In \cite{CK}, authors studied the existence of
multiple positive solution of Kirchhoff type problem with convex-concave polynomial type nonlinearities
having subcritical growth by Nehari manifold and fibering map methods. In addition, the corresponding results
of the Kirchhoff type problem can be found in \cite{AG, AF, CX, BW, CH, FG, gs, HZ, XZ,LF} and references therein.

\noindent The boundary value problems involving Kirchhoff equations arise in several physical and
biological systems. These type of non-local problems were initially observed by Kirchhoff in 1883
 in the study of string or membrane vibrations to describe the transversal oscillations of a stretched string,
 particularly, taking into account the subsequent change in string length caused by oscillations.\\

\noindent In this paper, first we discuss the Adimurthi \cite{A}
type existence result for the $n$-Kirchhoff problem in $(\mc{M})$
with nonlinearity $f(x,u)$ that has  superlinear growth near zero
and exponential growth near $\infty$. To prove our result we follow
the approach as in \cite{gs}. In our case, the operator $-\De_n$ is
not linear, so we required to prove the pointwise convergence of
gradients of Palais-Smale sequences. Moreover due to Kirchhoff
operator we need the norm convergence of Palais-Smale sequence to
show that weak limit is a solution. We used concentration
compactness principle to show this convergence.
In the second part, we discuss the $n$-Kirchhoff problem in $(\mc{P}_{\la,M})$ with sign-changing and exponential type nonlinearity to obtain the multiplicity of solutions with respect to the parameter $\la$. We show the multiplicity result by extracting Palais-Smale sequences in the Nehari manifold. The results obtained here are some how expected but we show how the results arise out of nature of Nehari manifold.\\

\noi The paper is organized as follows: In section 2, we consider the critical problem with positive nonlinearity and prove Adimurthi's type \cite{A} existence result. In section 3, we study the problem with convex-concave sign-changing nonlinearity by
Nehari manifold approach and show the existence of two solutions that arise from the nature of the Nehari manifold.\\

\noi We shall throughout use the following notations: The norm on
$W^{1,n}_{0}(\Om)$ and $L^{p}(\Om)$ are denoted by $\|\cdot\|$,
$\|u\|_{p}$ respectively. The weak convergence is denoted by
$\rightharpoonup$ and $\ra$ denotes strong convergence.

\section{Existence of positive solutions with positive nonlinearity}
\setcounter{equation}{0}
\noi In this section, we prove the existence result for the problem
\[\mc{(M)} -m(\|u\|^n)\Delta_n u=f(x,u) \; \text{in}\;  \Omega,\quad\quad  u=0\; \text{on}\;  \partial \Omega,\]
where $\Omega\subset \mb R^n$ is a bounded domain with smooth boundary,
$m:\mb R^+\rightarrow \mb R^+$ and $f:\Omega\times\mb R\rightarrow \mb R$ are continuous functions that satisfy the following assumptions:
\begin{enumerate}
  \item[$(m1)$] There exists $m_0>0$ such that $m(t)\geq m_0$ for all $t\geq 0$ and
  \[ M(t+s)\geq M(t)+M(s)\;\text{for all}\; s,t\geq 0,\]
  where $M(t)= \int_{0}^{t} m(s)ds$, the primitive of $m$ so that $M(0)=0.$
  \item[$(m2)$] There exist constants $a_1$, $a_2>0$ and $t_0>0$ such that for some $\sigma\in\mb R$
  \[  m(t)\leq a_1 + a_2 t^{\sigma}, \; \text{for all}\; t\geq t_0.\]
  \item[$(m3)$] $\frac{m(t)}{t}$ is nonincreasing for $t>0$.
 \end{enumerate}
The condition $(m1)$ is valid whenever $m(0)=m_0$ and $m$ is
nondecreasing. A typical example of a function $m$ satisfying the
conditions $(m1)-(m3)$ is $m(t)=m_0+at^\al$, where $m_0>0,$ $a\geq
0$ and $\al>0$. Another example is $m(t)=1+\log(1+t)$ for $t\geq 0$.\\
 From $(m3)$, we can easily deduce that
\[ \frac{1}{n}M(t)-\frac{1}{\theta}m(t)t \;\text{ is nondecreasing for }\; t\ge 0\;\text{and}\; \theta\ge 2n.\]
In particular, one has
\begin{equation}\label{7a0}
\frac{1}{n}M(t)-\frac{1}{\theta}m(t)t\ge 0\; \text{for all}\; t\ge
0\;\text{and}\; \theta\ge 2n.
\end{equation}
The nonlinearity $f(x,t)=h(x,t) e^{|t|^{n/n-1}}$, where $h(x,t)$ satisfies
\begin{enumerate}
\item[$(f1)$] $h\in C^1(\overline{\Om}\times \mb R)$, $h(x,0)=0,$ for all $t\le 0$, $h(x,t)>0,$ \text{for all}
$t>0$ and $\lim_{t\ra 0} \frac{h(x,t)}{|t|^n}=0$.
\item[$(f2)$] For any $\e>0,$ $\ds \lim_{t\ra \infty}\sup_{x\in \overline{\Om}} h(x,t) e^{-\e |t|^{n/n-1} }=0$, $\ds\lim_{t\ra \infty}\inf_{x\in \overline{\Om}} h(x,t) e^{\e|t|^{n/n-1}}=\infty.$
\item[$(f3)$] There exist positive  constants $t_0$, $K_0>0$ such that
\[  F(x,t)\le K_0 f(x,t)\;\mbox{for all}\; (x,t)\in \Om\times[t_0,+\infty).\]
\item[$(f4)$] For each $\ds x\in \Omega, \frac{f(x,t)}{t^{2n-1}}$ is increasing for $t>0$ and $\ds\lim_{t\rightarrow 0^+} \frac{f(x,t)}{t^{2n-1}}=0,\;\text{uniformly in }\; x\in \Om.$
\item[$(f5)$] $\ds \lim_{t\rightarrow \infty} t h(x,t)=\infty.$
\end{enumerate}
\noi Assumption $(f3)$ implies that $\ds  F(x,t)\ge
F(x,t_0)e^{\frac{1}{K_0}(t-t_0)}$, for all $(x,t)\in \mb R^n \times
[t_0, \infty)$ which is a reasonable condition for function behaving
as $e^{\al_0 |t|^{n/n-1}}$ at $\infty.$ Moreover from $(f3)$ it
follows that for each $\theta>0, $ there exists $R_\theta>0$
satisfying
\begin{equation}\label{n7a0}
\theta F(x,t)\le t f(x,t)\; \text{for all}\; (x,t)\in \Om \times [R_\theta, \infty).
\end{equation}
We also have that condition $(f4)$ implies that for $\mu \in [0,2n-1)$,
\begin{equation}\label{n1}
\lim_{t\rightarrow 0^+} \frac{f(x,t)}{t^\mu}=0, \; \text{uniformly in }\; x\in \Om.
\end{equation}
\noi Generally, the main difficulty encountered in non-local Kirchhoff problems is the competition between the growths of $m$ and $f$. Here we generalize the result of \cite{gs} to the $n$-Kirchhoff equation.
\begin{Definition}
We say that $u\in W_0^{1,n}(\Omega)$ is a weak solution of $(\mc{M})$ if holds
\[m(\|u\|^n)\int_\Om |\nabla u|^{n-2}\nabla u\nabla \phi ~dx=\int_\Om f(x,u)\phi~ dx  \;\; \text{for all}\; \;\phi \in W_0^{1,n}(\Omega).\]
\end{Definition}
\noi The energy functional $J:W^{1,n}_{0}(\Om)\rightarrow \mb R$ corresponding to the problem $(\mc M)$ is defined as
\[ J(u)=\frac{1}{n}M(\|u\|^n)-\int_\Om F(x,u)~ dx.\]
Then the functional $J$ is Fr$\acute{e}$chet differentiable and the critical points are the weak solutions of $(\mc M)$.
We prove the following Theorem in this section:
\begin{Theorem}\label{thm711}
Suppose $(m1)-(m3)$ and $(f1)-(f3)$ are satisfied. Then, problem $(\mc {M})$ has a positive solution.
\end{Theorem}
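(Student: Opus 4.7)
The plan is to obtain a positive critical point of the energy functional $J$ via a mountain pass scheme, tailored to the non-local $n$-Kirchhoff operator and the exponential nonlinearity.

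\textbf{Geometry of $J$.} Combining $(f4)$ with \eqref{n1} and $(f2)$ (with $\e$ arbitrarily small) yields, for every $\eta>0$ and $\al>0$, a constant $C_{\eta,\al}$ such that
\[ F(x,t) \le \eta\,|t|^{2n} + C_{\eta,\al}\,|t|^{q} e^{\al|t|^{n/(n-1)}},\qquad (x,t)\in\Om\times\mb R, \]
for some $q>n$. Using $(m1)$ (so $M(\|u\|^n)\ge m_0\|u\|^n$), the Sobolev embedding $W_0^{1,n}(\Om)\hookrightarrow L^{2n}(\Om)$, and the Trudinger--Moser bound \eqref{eqc002} applied on a sphere $\|u\|=\rho$ so small that $\al\rho^{n/(n-1)}<\al_n$, one obtains $J(u)\ge\beta>0$ for $\|u\|=\rho$. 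For the second condition I fix a nontrivial nonnegative $\phi_0\in C_c^\infty(\Om)$ and observe that $(f3)$ forces $F(x,t)\ge C_1 e^{t/K_0}$ for $t\ge t_0$, while $(m2)$ gives only polynomial growth $M(\|t\phi_0\|^n)\le C_2(1+t^{n(\sigma+1)})$. Hence $J(t\phi_0)\to -\infty$ as $t\to\infty$, and one can choose $e$ with $\|e\|>\rho$ and $J(e)<0$. The mountain pass theorem then produces a $(PS)_c$ sequence $\{u_k\}$ at level $c\ge\beta$.

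\textbf{Level estimate and boundedness.} The key step is to prove
\[ c < c^* := \frac{1}{n}\,M\!\left(\left(\frac{\al_n}{\al_0}\right)^{n-1}\right), \]
where $\al_0$ is the exponential rate of $f$. I would plug the Moser sequence $M_k$ (truncation and dilation of the fundamental solution of $-\De_n$ on a ball $B_r\subset\Om$, normalized so $\|M_k\|=1$) into the path $t\mapsto tM_k$; assumption $(f5)$, namely $t h(x,t)\to\infty$, provides the extra logarithmic gain needed to push $\max_{t\ge 0} J(tM_k)$ strictly below $c^*$ for large $k$. Boundedness of $\{u_k\}$ follows from the AR-type inequality \eqref{n7a0} (consequence of $(f3)$) together with $(m3)$ and \eqref{7a0}: testing $J'(u_k)[u_k]=o(\|u_k\|)$ against $J(u_k)\to c$ produces a bound on $\|u_k\|$. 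Along a subsequence, $u_k\rightharpoonup u$ in $W_0^{1,n}(\Om)$, and standard Trudinger--Moser arguments as in \cite{A,gs}, using $c<c^*$, give $f(x,u_k)\to f(x,u)$ and $F(x,u_k)\to F(x,u)$ in $L^1(\Om)$.

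\textbf{Convergence of gradients and positivity.} To pass to the limit in the quasilinear equation I would apply a Boccardo--Murat type argument for $\De_n$ to obtain $\na u_k\to\na u$ almost everywhere. To handle the non-local coefficient $m(\|u_k\|^n)$ I then invoke the concentration--compactness principle of P.-L.\ Lions on the measures $|\na u_k|^n\,dx$: an atom of mass $\mu_j$ at $x_j$ would force $\al_0\|u_k\|^{n/(n-1)}>\al_n$ on neighbourhoods of $x_j$ and, via the level estimate, contradict $c<c^*$. This rules out concentration, hence $\|u_k\|\to\|u\|$, so $m(\|u_k\|^n)\to m(\|u\|^n)$ and $u$ is a weak solution of $(\mc M)$. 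Testing with $u^-$ and using $h(x,t)=0$ for $t\le 0$ from $(f1)$ gives $u\ge 0$; $J(u)=c>\beta>0$ shows $u\not\equiv 0$; and standard $C^{1,\al}$ regularity plus the strong maximum principle applied to $-\De_n u=m(\|u\|^n)^{-1}f(x,u)\ge 0$ yield $u>0$ in $\Om$.

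\textbf{Main obstacle.} The crux is the interaction between the non-local Kirchhoff coefficient $m(\|u\|^n)$ and the exponential concentration. In the pure $n$-Laplacian case Brezis--Lieb plus the sub-threshold level is enough, but here $m(\|u_k\|^n)$ sits in front of $\De_n u_k$ and may itself drift in the limit, so one really needs to upgrade weak convergence to norm convergence of $\{u_k\}$ via concentration--compactness. Moreover, the threshold $c^*$ must be matched precisely to the primitive $M$, which makes the proof of $c<c^*$ genuinely more delicate than in \cite{A}: the Moser sequence estimate has to absorb both the Kirchhoff growth from $(m2)$ and the exponential gain from $(f5)$.
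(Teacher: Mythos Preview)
Your overall framework --- mountain pass geometry, the Moser-function level estimate, boundedness of $(PS)$ sequences, and a.e.\ convergence of gradients via localization --- matches the paper (Lemmas~\ref{lem7.1}--\ref{lem714}). The gap is in how you upgrade weak to norm convergence.

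You propose to rule out atoms in the defect measure $\mu=\lim|\na u_k|^n$ via the level estimate and then conclude $\|u_k\|\to\|u\|$. Two problems. First, absence of Dirac masses in $\mu$ does not force $\mu=|\na u|^n\,dx$; nothing in your outline kills the diffuse part of the gap. Second, the mechanism ``an atom at $x_j$ forces $\al_0\|u_k\|^{n/(n-1)}>\al_n$'' is not how concentration couples to the threshold in the Trudinger--Moser setting; the relevant quantitative tool here is Lions' higher integrability lemma (Lemma~\ref{plc}), not the Sobolev-type CCP with atoms. And your nontriviality step ``$J(u)=c>0$'' presupposes the very norm convergence you are trying to establish.

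The paper proceeds through an ingredient you omit entirely: the Nehari manifold. With $\rho_0:=\lim\|u_k\|$, Lemma~\ref{lem714} shows that the weak limit $u_0$ solves the \emph{auxiliary} equation $-m(\rho_0^n)\De_n u_0=f(x,u_0)$ (Claim~1), whence $u_0>0$ by regularity and the strong maximum principle. Then (Claim~2) the comparison $c_*\le b:=\inf_{\mc N}J$ together with Lemma~\ref{lem7.3} rules out $\langle J'(u_0),u_0\rangle<0$. Finally (Claim~3), assuming $J(u_0)<c_*$ one has $\|u_0\|<\rho_0$, so $v_k=u_k/\|u_k\|\rightharpoonup v_0=u_0/\rho_0\ne 0$ with $\|v_0\|<1$; Lions' Lemma~\ref{plc} plus the level bound $c_*<\frac{1}{n}M(\al_n^{n-1})$ and $(m1)$ give $\rho_0^n<\al_n^{n-1}+\|u_0\|^n$, hence $\sup_k\int_\Om e^{q|u_k|^{n/(n-1)}}<\infty$ for some $q>1$. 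This yields $\int_\Om f(x,u_k)(u_k-u_0)\to 0$ and then strong convergence in $W^{1,n}_0(\Om)$, contradicting $\|u_0\|<\rho_0$. Thus $\rho_0=\|u_0\|$ and $u_0$ solves $(\mc M)$.

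In short: compactness is obtained by combining the Nehari comparison with Lions' higher integrability lemma, not by excluding atoms; your single CCP step does not close as written.
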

\noi We prove this Theorem by mountain pass Lemma. In the next few Lemmas we studied the mountain pass structure and Palais-Smale sequence to the functional $J$.
\begin{Lemma}\label{lem7.1} Assume the conditions $(m1)$, $(f1)-(f3)$ hold. Then $J$ satisfies mountain-pass geometry around the $0$.
\end{Lemma}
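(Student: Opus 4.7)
The plan is to verify the two standard mountain-pass conditions at the origin: (i) there exist $\rho,\alpha>0$ with $J(u)\ge\alpha$ whenever $\|u\|=\rho$, and (ii) there exists $e\in W^{1,n}_0(\Om)$ with $\|e\|>\rho$ and $J(e)<0$.

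First I would establish a uniform pointwise estimate on $F$. Using $(f4)$, for any $\epsilon>0$ there is $\delta>0$ such that $f(x,t)\le \epsilon|t|^{2n-1}$ for $|t|\le\delta$. For $|t|\ge\delta$, the upper half of $(f2)$ applied to $f(x,t)=h(x,t)e^{|t|^{n/(n-1)}}$ yields $f(x,t)\le C_\epsilon e^{(1+\epsilon)|t|^{n/(n-1)}}$. Combining and integrating, for any fixed $q>n$ I obtain
\[
F(x,t)\le \epsilon |t|^{2n}+C_\epsilon |t|^q e^{\alpha |t|^{n/(n-1)}}\quad\text{for all }(x,t)\in\Om\times\mb R,
\]
with $\alpha=1+\epsilon$.

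For (i), I apply $(m1)$ to obtain $\frac{1}{n}M(\|u\|^n)\ge \frac{m_0}{n}\|u\|^n$, and then estimate $\int_\Om F(x,u)\,dx$ via H\"older's inequality with exponents $p,p'$ ($p>1$ close to $1$):
\[
\int_\Om |u|^q e^{\alpha|u|^{n/(n-1)}}\,dx\le \Bigl(\int_\Om |u|^{qp'}\,dx\Bigr)^{1/p'}\Bigl(\int_\Om e^{\alpha p |u|^{n/(n-1)}}\,dx\Bigr)^{1/p}.
\]
Writing $u=\|u\|\,v$ with $\|v\|=1$, the Moser integral becomes $\int_\Om \exp\bigl(\alpha p\|u\|^{n/(n-1)}|v|^{n/(n-1)}\bigr)\,dx$, which is uniformly bounded by the Trudinger--Moser inequality \eqref{eqc002} provided $\alpha p\,\rho^{n/(n-1)}\le\alpha_n$. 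Combined with the Sobolev embedding $W^{1,n}_0(\Om)\hookrightarrow L^{qp'}(\Om)$, this gives
\[
\int_\Om F(x,u)\,dx\le C_1 \epsilon\|u\|^{2n}+C_2\|u\|^q,
\]
so $J(u)\ge \frac{m_0}{n}\rho^n-C_1\epsilon\rho^{2n}-C_2\rho^q$. Since $q>n$ and $2n>n$, choosing $\epsilon$ small and then $\rho$ small produces a positive lower bound $\alpha$ on the sphere $\|u\|=\rho$.

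For (ii), I fix any $u_0\in W^{1,n}_0(\Om)$ with $u_0\ge 0$, $u_0\not\equiv 0$, and study $J(tu_0)$ as $t\to\infty$. Integrating $(m2)$ gives $M(s)\le A(1+s^\theta)$ for some $\theta>0$, so $\frac{1}{n}M(\|tu_0\|^n)\le B(1+t^{n\theta})$, i.e.\ polynomial in $t$. Meanwhile, as noted in the paper, $(f3)$ yields $F(x,s)\ge F(x,t_0)e^{(s-t_0)/K_0}$ for $s\ge t_0$. Choosing a measurable $\Om_0\subset\Om$ of positive measure on which $u_0\ge \gamma>0$, for $t\gamma\ge t_0$ we obtain
\[
\int_\Om F(x,tu_0)\,dx\ge \int_{\Om_0}F(x,tu_0)\,dx\ge c\,e^{\gamma t/K_0}.
\]
Since the exponential dominates every polynomial in $t$, $J(tu_0)\to -\infty$, and taking $e:=Tu_0$ for $T$ sufficiently large gives $\|e\|>\rho$ and $J(e)<0$.

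The main subtlety is the coupled choice of $\epsilon$, $p$ and $\rho$ in part (i): one must simultaneously enforce $\alpha p\,\rho^{n/(n-1)}\le\alpha_n$ (so that Trudinger--Moser applies after rescaling) and keep $\epsilon$ small enough that $C_1\epsilon\rho^{2n}$ is absorbed into $\frac{m_0}{n}\rho^n$. This is standard once the pointwise decomposition of $F$ above is in place; the remainder is bookkeeping.
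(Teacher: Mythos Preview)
Your proof is correct and parallels the paper's closely, with two minor differences worth noting. In part~(i) you invoke $(f4)$ for the small-$t$ bound $f(x,t)\le\epsilon|t|^{2n-1}$, but the lemma's hypotheses list only $(f1)$--$(f3)$; the paper uses $(f1)$ instead (giving $f(x,t)=o(|t|^n)$ and hence $|F(x,t)|\le\epsilon|t|^n$ near $0$), after which your H\"older/Trudinger--Moser estimate is essentially the paper's argument verbatim. For part~(ii) the paper passes through the Ambrosetti--Rabinowitz consequence \eqref{n7a0} of $(f3)$ to obtain the polynomial lower bound $F(x,t)\ge C_1 t^\theta-C_2$ with $\theta>\max\{n,n(\sigma+1)\}$ and then compares against the polynomial upper bound on $M$ coming from $(m2)$; you instead use the exponential lower bound $F(x,s)\ge F(x,t_0)e^{(s-t_0)/K_0}$ directly from $(f3)$. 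Your route is shorter and sidesteps the choice of $\theta$, though you should remark (via continuity of $h$ on $\overline{\Om}\times[0,t_0]$ and $h(x,t)>0$ for $t>0$ from $(f1)$) that $\inf_{x\in\overline{\Om}}F(x,t_0)>0$, so that your constant $c$ is strictly positive. Both arguments tacitly rely on $(m2)$ for the polynomial bound on $M$, which the paper also invokes even though it is not listed among the lemma's stated hypotheses.
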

\proof From the assumptions, $(f1)-(f3)$, for $\e>0$, $r>n$, there exists $C>0$ such that
\[|F(x,t)| \le \e |t|^n + C |t|^r e^{|t|^{n/n-1}},\;\; \text{for all}\; (x,t)\in \Omega \times \mb R.\]
Therefore, using Sobolev and H\"{o}lder inequalities, we get
\begin{align*}
\int_\Om F(x,u) dx &\le \e\int_\Om |u|^n dx +C \int_\Om |u|^r e^{|u|^{n/n-1}} dx \\
&\le \e C_1 \|u\|^n + C \|u\|_{2r}^{r} \left(\int_\Om e^{2\|u\|^{n/n-1} (\frac{u}{\|u\|})^{n/n-1}} \right)^{1/2}\\
&\le \e C_1 \|u\|^n + C_2 \|u\|^r
\end{align*}
\noi for $\|u\|<R_1$, where $R_1\leq \left(\frac{\al_n}{2}\right)^{\frac{n-1}{n}}$, thanks to Moser-Trudinger inequality \eqref{eqc002}.
Hence
\[J(u) \ge \|u\|^n \left(\frac{m_0}{n}-\e C_1 - C_2 \|u\|^{r-n}\right).\]
Since $r>n,$ we can choose $\e$, $0<R\leq R_1$ small such that $J(u)\ge \tau$ for some $\tau$ on $\|u\|=R$.\\
\noi Now by $\eqref{n7a0}$, for $\theta>\max\{n,n(\sigma+1)\}$, there exist $C_1$, $C_2>0$ such that
 \begin{equation}\label{n3}
 F(x,t)\geq C_1 t^{\theta}- C_2\;\mbox{for all}\; (x,t)\in \Om\times[0,+\infty)
 \end{equation}
 and for all $t\geq t_0$ condition $(m2)$ implies that
\begin{equation}\label{n2}
M(t)\leq\left\{
 \begin{array}{lr}
 a_0+a_1t+\frac{a_2}{\sigma+1} t^{\sigma+1},\; \mbox{if}\; \sigma\ne -1,\\
 b_0+ a_1 t+a_2 \ln t\quad\quad\mbox{if}\; \sigma=-1,
 \end{array}
 \right.
\end{equation}
where $a_0= M(t_0) -a_1t_0-a_2 t_0^{\sigma+1}/(\sigma+1)$ and $b_0= M(t_0) -a_1 t_0-a_2\ln t_0$. Now, choose a function $\phi_0 \in W^{1,n}_{0}(\Om)$ with $\phi_0\geq 0$ and $\|\phi_0\|=1$. Then from \eqref{n3} and \eqref{n2}, for all $t\geq t_0$, we obtain
\begin{equation*}
J(t\phi_0)\leq \left\{
\begin{array}{lr}
\frac{a_0}{n}+\frac{a_1}{n}t^n+\frac{a_2}{n\sigma+n} t^{n\sigma+n}- C_1 t^{\theta}\|\phi_0\|^{\theta}_{\theta}+C_2 |\Om|,\; \mbox{if}\; \sigma\ne -1,\\
\frac{ b_0}{n}+ \frac{a_1}{n} t^n +\frac{a_2}{n} \ln t - C_1 t^{\theta}\|\phi_0\|^{\theta}_{\theta}+C_2 |\Om|\;\;\;\quad\quad\mbox{if}\; \sigma=-1,
\end{array}
\right.
\end{equation*}
from which we conclude that $J(t u_0)\ra -\infty$ as $t\ra +\infty$ provided that $\theta>\max\{n, n\sigma+n\}$.
Therefore, $J$ satisfies mountain-pass geometry near $0$.\QED

\begin{Lemma}\label{lem712}
Every Palais-Smale sequence of $J$ is bounded in $W^{1,n}_{0}(\Om)$.
\end{Lemma}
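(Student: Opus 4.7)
The plan is to apply the standard Ambrosetti--Rabinowitz style argument adapted to the Kirchhoff setting, i.e., to evaluate the combination $J(u_k) - \frac{1}{\theta}\langle J'(u_k),u_k\rangle$ for a suitably chosen $\theta$ and to exploit the interplay of $(m1)$, $(m3)$ and $(f3)$.

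\medskip

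First, I would fix a Palais--Smale sequence $\{u_k\}\subset W^{1,n}_0(\Omega)$, so that
\[
J(u_k)\to c \qquad\text{and}\qquad \|J'(u_k)\|_{W^{-1,n'}}\to 0 .
\]
I choose $\theta>2n$ and compute
\[
J(u_k)-\tfrac{1}{\theta}\langle J'(u_k),u_k\rangle
=\Bigl[\tfrac{1}{n}M(\|u_k\|^n)-\tfrac{1}{\theta}m(\|u_k\|^n)\|u_k\|^n\Bigr]
-\int_{\Omega}\bigl[F(x,u_k)-\tfrac{1}{\theta}f(x,u_k)u_k\bigr]\,dx.
\]
The left-hand side is bounded by $C+o(1)\|u_k\|$ by the Palais--Smale condition.

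\medskip

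For the first bracket I would use $(m3)$: condition \eqref{7a0} already shows $\tfrac{1}{n}M(t)-\tfrac{1}{2n}m(t)t\ge 0$. Rewriting
\[
\tfrac{1}{n}M(t)-\tfrac{1}{\theta}m(t)t
=\Bigl[\tfrac{1}{n}M(t)-\tfrac{1}{2n}m(t)t\Bigr]+\Bigl(\tfrac{1}{2n}-\tfrac{1}{\theta}\Bigr)m(t)\,t
\ge \Bigl(\tfrac{1}{2n}-\tfrac{1}{\theta}\Bigr)m_0\,t,
\]
where the last step uses $(m1)$. Evaluating at $t=\|u_k\|^n$ yields a genuine lower bound of order $\|u_k\|^n$, which is exactly what boundedness will be extracted from.

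\medskip

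For the second bracket I would invoke $(f3)$ through its consequence \eqref{n7a0}: picking the same $\theta>2n$, there exists $R_\theta$ such that $F(x,t)-\tfrac{1}{\theta}f(x,t)t\le 0$ whenever $|t|\ge R_\theta$. On the complementary set $\{|u_k|\le R_\theta\}$ the continuity of $f$ (from $(f1)$) gives a uniform bound, so
\[
\int_{\Omega}\bigl[F(x,u_k)-\tfrac{1}{\theta}f(x,u_k)u_k\bigr]\,dx \le C|\Omega|.
\]
Combining the two estimates produces
\[
\Bigl(\tfrac{1}{2n}-\tfrac{1}{\theta}\Bigr)m_0\,\|u_k\|^n \le C+o(1)\|u_k\|,
\]
from which $\{\|u_k\|\}$ must be bounded since $n\ge 2$ and $\tfrac{1}{2n}-\tfrac{1}{\theta}>0$.

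\medskip

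I do not expect any serious obstacle here: the AR-type condition baked into $(f3)$ controls the nonlinearity and $(m3)$ together with $(m1)$ controls the Kirchhoff part. The only delicate point is not to choose $\theta=2n$ (which makes the coefficient $\tfrac{1}{2n}-\tfrac{1}{\theta}$ vanish and loses the crucial growth in $\|u_k\|^n$); choosing any $\theta>2n$ fixes this, and such a $\theta$ is admissible in both \eqref{7a0} and \eqref{n7a0}.
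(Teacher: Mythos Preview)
Your argument is correct and follows essentially the same route as the paper: evaluate $J(u_k)-\tfrac{1}{\theta}\langle J'(u_k),u_k\rangle$ for some $\theta>2n$, use \eqref{7a0} for the Kirchhoff part and \eqref{n7a0} for the nonlinearity, and conclude boundedness from the resulting inequality. The only difference is cosmetic: the paper writes the lower bound as $\bigl(\tfrac{1}{2n}-\tfrac{1}{\theta}\bigr)m(\|u_k\|^n)\|u_k\|^n$ and absorbs the contribution from $\{|u_k|<R_\theta\}$ directly into the constant on the left, whereas you pass to $m_0$ via $(m1)$ and treat that region explicitly.
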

{\proof} Let $\{u_k\}\subset W^{1,n}_{0}(\Om)$ be a Palais-Smale sequence for $J$ at level $c$, that is
\begin{equation}\label{n7a1}
\frac{1}{n}M(\|u_k\|^n)-\int_\Om F(x,u_k) \rightarrow c
\end{equation}
and for all $\phi \in W^{1,n}_{0}(\Om)$
\begin{equation}\label{n7a2}
 \left|-m(\|u_k\|^n)\int_\Om |\na u_k|^{n-2} \na u_k \na \phi dx -\int_\Om f(x,u_k) \phi dx\right| \le \e_k \|\phi\|
\end{equation}
where $\e_k\rightarrow 0$ as $k\rightarrow \infty.$
From \eqref{7a0}, \eqref{n7a0}, \eqref{n7a1} and \eqref{n7a2}, we obtain
 \begin{align*}
C+ \|u_k\|&\ge \frac{1}{n}M(\|u_k\|^n)-\frac{1}{\theta} m(\|u_k\|^n) \|u_k\|^n \\
&\quad\quad-\int_\Om \left(F(x,u_k)-\frac{1}{\theta}f(x,u_k)u_k\right)\\
&\geq \left(\frac{1}{2n}-\frac{1}{\theta}\right) m(\|u_k\|^n) \|u_k\|^{n}.
\end{align*}
From this and taking $\theta >2n$, we obtain the boundedness of the sequence.  \QED

\noi Let $\ds \Gamma=\{\gamma\in C([0,1],W_0^{1,n}(\Omega)):\gamma(0)=0,J(\gamma(1))<0\}$ and define the mountain-pass level
$\ds c_*=\inf_{\gamma\in \Gamma}\max_{t\in[0,1]}J(\gamma(t))$. Then we have,
\begin{Lemma}\label{lem7.2}
$\ds c_*<\frac{1}{n}M(\alpha_n^{n-1})$, where $\al_n= nw_{n-1}^{\frac{1}{n-1}}$, $w_{n-1}=$ volume of $n-1$ dimensional unit sphere in $\mb R^n$.
\end{Lemma}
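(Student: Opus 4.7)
My plan is to probe $c_*$ along paths built from Moser's concentrating functions. Fix $\rho>0$ with $\overline{B_\rho(0)}\subset\Om$, and take the unit-norm Moser sequence
\[
M_k(x)=\frac{1}{w_{n-1}^{1/n}}\begin{cases}(\log k)^{\frac{n-1}{n}}, & |x|\le \rho/k,\\ (\log k)^{-\frac{1}{n}}\log(\rho/|x|), & \rho/k\le |x|\le \rho,\\ 0, & |x|\ge \rho,\end{cases}
\]
so that $M_k\in W^{1,n}_0(\Om)$ and $\|M_k\|=1$. Lemma \ref{lem7.1} gives $J(tu)\to-\infty$ as $t\to\infty$ for every unit $u$, so one can pick $T_k>0$ with $J(T_kM_k)<0$; the path $\ga_k(s):=sT_kM_k$ then lies in $\Ga$, and
\[
c_*\le\max_{s\in[0,1]}J(\ga_k(s))=\max_{t\ge 0}J(tM_k).
\]
It therefore suffices to exhibit one $k$ for which $\max_{t\ge 0}J(tM_k)<\tfrac{1}{n}M(\al_n^{n-1})$.

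I would argue by contradiction, assuming this maximum is $\ge \tfrac{1}{n}M(\al_n^{n-1})$ for every $k$, with maximiser $t_k>0$. The critical-point equation reads
\[
m(t_k^n)\,t_k^{n-1}=\int_\Om f(x,t_kM_k)M_k\,dx,\qquad (*)
\]
and since $F\ge 0$ the assumption yields $\tfrac{1}{n}M(t_k^n)\ge \tfrac{1}{n}M(\al_n^{n-1})$, i.e.\ $t_k^n\ge \al_n^{n-1}$. For a matching upper bound I would localise $(*)$ to $B_{\rho/k}(0)$, where $M_k\equiv w_{n-1}^{-1/n}(\log k)^{(n-1)/n}$; the exponential factor in $f$ then equals $k^{t_k^{n/(n-1)}/w_{n-1}^{1/(n-1)}}$. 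Together with $|B_{\rho/k}|\asymp k^{-n}$ and the at-most polynomial growth of $m(t_k^n)t_k^{n-1}$ afforded by $(m2)$, the relation $(*)$ can hold in the limit only if $t_k^{n/(n-1)}\le n\,w_{n-1}^{1/(n-1)}$, that is $t_k^n\le \al_n^{n-1}$. Hence $t_k^n\to\al_n^{n-1}$.

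At this critical balance the integral over $B_{\rho/k}$ contributes an order-one quantity. Using $(f5)$ (namely $th(x,t)\to\infty$) to produce a sharp lower bound on $f$, one extracts $\liminf_k\int_\Om F(x,t_kM_k)\,dx>0$, whence the identity $J(t_kM_k)=\tfrac{1}{n}M(t_k^n)-\int_\Om F(x,t_kM_k)\,dx$ forces $\limsup_k J(t_kM_k)<\tfrac{1}{n}M(\al_n^{n-1})$, the desired contradiction. The main obstacle is precisely this concentration-balance bookkeeping: quantitatively matching the exponential concentration of $f(x,t_kM_k)$ on the shrinking ball $B_{\rho/k}$ against the polynomial growth of $m(t_k^n)t_k^{n-1}$ in $(*)$, and converting $(f5)$ into a strictly positive liminf for $\int F$.
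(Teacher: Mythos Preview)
Your overall strategy---Moser concentrating functions, contradiction, and the critical-point identity $(*)$ localised to the inner ball---is the same as the paper's. (The paper uses a shrinking support radius $\de_k\to0$; your fixed $\rho$ works just as well and is arguably cleaner.) The gap is in how you extract the final contradiction.

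You stop the localisation argument at ``$t_k^n\to\al_n^{n-1}$'' and label the inner-ball contribution ``order one''. But pushing the very same estimate one step further with $(f5)$ shows it is actually unbounded: writing $s_k:=t_kM_k(0)$, the bound $t_k^n\ge\al_n^{n-1}$ already gives $e^{|s_k|^{n/(n-1)}}\ge k^n$, so
\[
\int_{B_{\rho/k}} f(x,t_kM_k)\,M_k\,dx \;\ge\; C\rho^n\,M_k(0)\,\min_{x}h(x,s_k)\;=\;\frac{C\rho^n}{t_k}\,\bigl[s_k\,\min_x h(x,s_k)\bigr]\;\longrightarrow\;\infty
\]
by $(f5)$, while the left side $m(t_k^n)t_k^{n-1}$ of $(*)$ is bounded (your own $(m2)$-based argument forces $t_k$ bounded). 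That is the paper's contradiction, obtained directly from $(*)$; no passage through $\int_\Om F$ is required.

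Your proposed detour---showing $\liminf_k\int_\Om F(x,t_kM_k)\,dx>0$---does not follow from $(f5)$ alone. A Laplace-type estimate gives $\int_{B_{\rho/k}}F(x,t_kM_k)\,dx$ of order $(\log k)^{-1}\cdot s_k\min_x h(x,s_k)$ at the critical balance, and $(f5)$ supplies no rate for the divergence of $s_k h(x,s_k)$; if for instance $th(x,t)\sim\log\log t$, that product tends to zero. So the obstacle you flag in your last sentence is genuine on the route you chose, but it disappears on the direct route through $(*)$.
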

\proof Let $\de_k>0$ be such that $\de_k \ra 0$ as $k\ra\infty$ and let $\phi_k(x)$ be the sequence of Moser functions defined by
\begin{equation}\label{moserfn}
 \phi_{k}(x) = \frac{1}{w_{n-1}^{\frac{1}{n}}}\left\{
\begin{array}{lr}
(\log k)^{\frac{n-1}{n}}& 0\leq \frac{|x|}{\de_k}\leq {\frac{1}{k}};\\
 \frac{\log{\frac{\de_k}{|x|}}}{(\log k)^{\frac{1}{n}}}& \frac{1}{k}\leq \frac{|x|}{\de_k}\leq
 1;\\
 0 & \frac{|x|}{\de_k}\geq 1,
\end{array}
\right.
\end{equation}
with support in $B_{\de_k}(0)\subseteq \mb R^n$. It can be easily seen
that $\|\na \phi_k\|_{n} =1$ for all $k$. Suppose the result is not true, i.e. $c_*\geq\frac{1}{n}M(\alpha_n^{n-1})$. Then for each $k$, there exists $t_k$ such that
\begin{equation}\label{7a1}
\sup_{t>0}J(t\phi_k)=J(t_k\phi_k)=\frac{1}{n}M(\|t_k\phi_k\|^n)-\int_{\Om} F(x,t_k\phi_k) \geq  \frac{1}{n}M(\alpha_n^{n-1}).
\end{equation}
From \eqref{7a1}, we see that $t_k$ is a bounded sequence as $J(t_k\phi_k)\rightarrow-\infty$ as $t_k\rightarrow\infty.$
Also using $M$ is monotone increasing and $F(x,t_k \phi_k)\geq 0$ in \eqref{7a1}, we obtain
\begin{equation}\label{7a2}
t_k^n\geq\alpha_n^{n-1}.
\end{equation}
Now since $t_k$ is a point of maximum for one dimensional map $t\mapsto J(t\phi_k),$ we have
$\frac{d}{dt}J(t\phi_k)|_{t=t_k}=0.$ From this it follows that
\begin{align}\label{new7a2}
 m(t_k^n\|\phi_k\|^n)t_k^n\|\phi_k\|^n &=\int_{\Om} f(x,t_k\phi_k)t_k\phi_k\geq \int_{B_\frac{\delta_k}{k}(0)} f(x,t_k\phi_k)t_k\phi_k\notag\\
 &=\phi_k(0)t_k h(x,t_k\phi_k(0))\frac{(\delta_k)^n}{k^n}.k^n \notag\\
&=\frac{t_k}{w_{n-1}^\frac{1}{n}}(\log k)^{\frac{n-1}{n}-\frac{1}{\al}}h(x,t_k\phi_k(0)).
\end{align}
\noi Now we choose $\de_{k}= (\log k)^{\frac{-1}{\al n}}$, with $\al
> \frac{n}{n-1}$. Then  $(f5)$ implies that the right hand side of
\eqref{new7a2} tends to $\infty$. Which is a contradiction as the
left side of \eqref{new7a2} is bounded. Hence
$c_*<\frac{1}{n}M(\alpha_n^{n-1})$.\QED

\noi In order to prove that a Palais-Smale sequence converges to a solution of problem ($\mc M$) we need  the following  convergence Lemma. We refer to Lemma 2.1 in \cite{fmr} for a proof.
\begin{Lemma}\label{lc1}
Let $\Om\subset \mb R^n$ be a bounded domain and $f:\overline{\Om}\times \mb R \ra \mb R$ a continuous function. Then for any sequence $\{u_k\}$ in
$L^{1}(\Om)$ such that
\[u_k\ra u\;\mbox{in}\; L^{1}(\Om),\quad f(x,u_k)\in L^{1}(\Om)\; \mbox{and}\; \int_{\Om}|f(x,u_k)u_k|\leq C,\]
we have up to a subsequence $f(x,u_k)\ra f(x,u)$ and $F(x,u_k)\ra F(x,u)$ strongly in $L^{1}(\Om)$.
\end{Lemma}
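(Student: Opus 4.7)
The plan is to apply Vitali's convergence theorem. Extract a subsequence so that $u_k \to u$ almost everywhere in $\Om$ (using $u_k \to u$ in $L^1(\Om)$); then by the continuity of $f$, and hence of its primitive $F$, one obtains $f(x, u_k) \to f(x, u)$ and $F(x, u_k) \to F(x, u)$ pointwise a.e. Since $|\Om| < \infty$, it remains only to establish uniform integrability of each of the two families on $\Om$.

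The central estimate concerns $\{f(x, u_k)\}$: for any measurable $E \subset \Om$ and any threshold $M > 0$,
$$\int_E |f(x, u_k)| \, dx \;\leq\; \int_{E \cap \{|u_k| \leq M\}} |f(x, u_k)| \, dx \;+\; \frac{1}{M} \int_{E \cap \{|u_k| > M\}} |u_k f(x, u_k)| \, dx.$$
On $\{|u_k| \leq M\}$, continuity of $f$ on the compact set $\overline{\Om} \times [-M, M]$ yields a uniform bound $|f(x, u_k)| \leq C_M$, so the first term is at most $C_M |E|$. The hypothesis $\int_\Om |u_k f(x, u_k)| \leq C$ bounds the second term by $C/M$. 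Given $\e > 0$, pick $M$ with $C/M < \e/2$ and then $|E| < \e/(2 C_M)$; this delivers the uniform absolute continuity of the integrals. Taking $E = \Om$ in the same decomposition gives the uniform $L^1$ bound $\int_\Om |f(x, u_k)| \leq C_M |\Om| + C/M$. Vitali's theorem then yields $f(x, u_k) \to f(x, u)$ in $L^1(\Om)$.

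For $\{F(x, u_k)\}$ the analogous decomposition still handles the low-sublevel set, since $F$ is continuous and therefore uniformly bounded by some $\tilde C_M$ on $\overline{\Om} \times [-M, M]$. The main obstacle is the complementary set $\{|u_k| > M\}$, where one must control $|F(x, t)|$ in terms of the only quantity that is a priori uniformly bounded, namely $|t f(x, t)|$. In the generality of the abstract statement this requires an additional growth-type relation between $F$ and $tf$ at infinity; in the intended application to $(\mc M)$ it is supplied by $(f3)$, which gives $|F(x, t)| \leq K_0 |f(x, t)| \leq (K_0/M)|t f(x, t)|$ for $|t| \geq \max(t_0, M)$, so that the second piece is bounded by $K_0 C / M$. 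A second application of Vitali's theorem finishes the proof.
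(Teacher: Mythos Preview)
The paper gives no proof of this lemma; it simply refers to Lemma~2.1 in \cite{fmr}. Your argument for the convergence $f(x,u_k)\to f(x,u)$ in $L^1(\Om)$ is exactly the standard proof found there: pass to an a.e.-convergent subsequence, split $\Om=\{|u_k|\le M\}\cup\{|u_k|>M\}$, bound the first piece using continuity of $f$ on the compact set $\overline{\Om}\times[-M,M]$ and the second via $\int_\Om|u_kf(x,u_k)|\le C$, and conclude by Vitali. So for the $f$-part your proposal is correct and coincides with the intended argument.

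Your caveat on the $F$-part is accurate and worth recording. The cited Lemma~2.1 of \cite{fmr} in fact asserts only the $f$-conclusion; the $F$-statement as written in the present paper does not follow from the bare hypotheses (continuity of $f$ and the bound on $\int|u_kf(x,u_k)|$) without some control of $|F(x,t)|$ by $|tf(x,t)|$ for large $|t|$. In the setting of problem $(\mc M)$ this is precisely supplied by $(f3)$, which gives $F(x,t)\le K_0 f(x,t)$ for $t\ge t_0$, and the paper only ever invokes the $F$-convergence for this specific nonlinearity. Your use of $(f3)$ to close the argument is therefore the correct fix in context.
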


\noi Now we need the following Lemma, inspired by \cite{do6}, to show that weak limit of a Palais-Smale sequence is a weak solution of $(\mc M)$,

\begin{Lemma}\label{lem714}
For any Palais-Smale sequence $\{u_k\}$, there exists a subsequence still denoted by $\{u_k\}$ and $u\in W^{1,n}_{0}(\Om)$ such that $f(x,u_k) \rightarrow f(x,u)$ in $L^1(\Om)$ and $|\na u_k|^{n-2}\na u_k \rightharpoonup |\na u|^{n-2} \na u$ weakly in $(L^{n/n-1}(\Om))^n$.
\end{Lemma}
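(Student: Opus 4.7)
The plan is to start from the boundedness granted by Lemma \ref{lem712} and extract the usual compactness information, then treat the two claims separately. The $L^1$-convergence of $f(x,u_k)$ is essentially an application of Lemma \ref{lc1}; the weak convergence of the $n$-Laplacian vector field is the serious part and will require pointwise convergence of gradients.

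First, by Lemma \ref{lem712} the sequence $\{u_k\}$ is bounded in $W^{1,n}_0(\Om)$, so up to a subsequence $u_k\rightharpoonup u$ in $W^{1,n}_0(\Om)$, $u_k\to u$ in $L^q(\Om)$ for every $q\in[1,\infty)$ by the Rellich--Kondrachov theorem, and $u_k\to u$ a.e.\ in $\Om$. In particular, $\{\|u_k\|\}$ is bounded and I may also assume $\|u_k\|^n\to s$ for some $s\geq 0$ and that $m(\|u_k\|^n)\to m_\infty\geq m_0>0$ by $(m1)$ and continuity of $m$.

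Next I would test \eqref{n7a2} with $\phi=u_k$ itself. Combining this with the boundedness just proved gives
\[
\int_\Om f(x,u_k)u_k\,dx = m(\|u_k\|^n)\|u_k\|^n + o(1),
\]
so $\int_\Om f(x,u_k)u_k\,dx$ is uniformly bounded. By $(f1)$, $f(x,\cdot)$ is continuous, and it is clear that $f(x,u_k)\in L^1(\Om)$ for each $k$ (using the growth bound $|f(x,t)|\leq C|t|^{n-1}+C|t|^{r-1}e^{|t|^{n/(n-1)}}$ already used in Lemma \ref{lem7.1} together with Trudinger--Moser). Applying Lemma \ref{lc1} yields $f(x,u_k)\to f(x,u)$ in $L^1(\Om)$ along a further subsequence, proving the first claim.

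For the weak convergence of $|\na u_k|^{n-2}\na u_k$ the main obstacle is to obtain a.e.\ convergence of the gradients; I would follow the Boccardo--Murat/de Figueiredo--Miyagaki--Ruf truncation argument. Fix $L>0$ and test \eqref{n7a2} with $\phi_k=T_L(u_k-u)$, where $T_L(s)=\max\{-L,\min\{L,s\}\}$. Since $T_L(u_k-u)\to 0$ a.e.\ and is uniformly bounded by $L$, and $f(x,u_k)\to f(x,u)$ in $L^1(\Om)$, dominated convergence gives $\int_\Om f(x,u_k)T_L(u_k-u)\,dx\to 0$. Also, $\|T_L(u_k-u)\|\leq C$ for each fixed $L$, so the right-hand side of \eqref{n7a2} is $o(1)$. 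Using weak convergence of $\na u_k$ to kill the term involving $|\na u|^{n-2}\na u\cdot\na T_L(u_k-u)$, one arrives at
\[
\limsup_{k\to\infty} m(\|u_k\|^n)\int_{\{|u_k-u|\leq L\}}\bigl(|\na u_k|^{n-2}\na u_k-|\na u|^{n-2}\na u\bigr)\cdot\na(u_k-u)\,dx\leq 0.
\]
Because $m(\|u_k\|^n)\geq m_0>0$ and the integrand is pointwise nonnegative by the standard monotonicity inequality for the map $\xi\mapsto |\xi|^{n-2}\xi$, this forces the integrand to tend to zero in $L^1(\{|u_k-u|\leq L\})$; since $L$ is arbitrary, $\na u_k\to\na u$ in measure, hence a.e.\ after another extraction. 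Consequently $|\na u_k|^{n-2}\na u_k\to|\na u|^{n-2}\na u$ a.e. The sequence $\{|\na u_k|^{n-2}\na u_k\}$ is bounded in $(L^{n/(n-1)}(\Om))^n$ since $\na u_k$ is bounded in $(L^n(\Om))^n$, and a.e.\ convergence together with $L^{n/(n-1)}$-boundedness yields weak convergence in $(L^{n/(n-1)}(\Om))^n$ to the identified limit, completing the proof. The truncated monotonicity step is where the real work happens; the rest is bookkeeping.
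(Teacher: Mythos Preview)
Your proof is correct, but it follows a genuinely different route from the paper's. For the first claim both arguments coincide: bound $\int_\Om f(x,u_k)u_k$ by testing \eqref{n7a2} with $u_k$ and invoke Lemma \ref{lc1}. For the gradient convergence, however, the paper proceeds via a concentration--compactness argument in the spirit of Lions and do~\'O: it passes to a defect measure $\mu$ for $|\na u_k|^n$, identifies a finite concentration set $\mc A_\sigma$, uses a \emph{local} Trudinger--Moser bound away from $\mc A_\sigma$ to get $\{f(x,u_k)\}$ bounded in $L^q_{\mathrm{loc}}$ for some $q>1$ (hence $\int_K f(x,u_k)u_k\to\int_K f(x,u)u$ on compacta $K\subset\overline\Om\setminus\mc A_\sigma$), and finally tests with cut-off functions supported away from the concentration points to obtain the monotonicity estimate on each $\Om_{\e_0}$. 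Your Boccardo--Murat truncation $T_L(u_k-u)$ bypasses all of this: once $f(x,u_k)\to f(x,u)$ in $L^1$ is known, the uniform $L^\infty$ bound on $T_L$ kills the right-hand side directly, and the monotonicity of $\xi\mapsto|\xi|^{n-2}\xi$ together with $m\ge m_0$ gives $\na u_k\to\na u$ in measure without any localisation. Your argument is shorter and more elementary; the paper's concentration framework, on the other hand, is not wasted effort, since the same construction (finite singular set, cut-offs $\psi_\e$) is reused verbatim in the compactness Lemma \ref{cpt} for the critical case $\ba=\frac{n}{n-1}$.
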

\proof From Lemma \ref{lem712}, we obtain that  $\{u_k\}$ is bounded in $W^{1,n}_{0}(\Om)$. Consequently,  up to a subsequence
$u_k\rightharpoonup u$ weakly in $W^{1,n}_{0}(\Om)$, $u_k\ra u$ strongly in $L^{q}(\Om)$ for all $q\in[1,\infty)$ and $
u_{k}(x)\ra u(x)\; \mbox{a.e in}\;\Om$. Then using the fact that $\{u_k\}$ is a bounded sequence together with \eqref{n7a2} and Lemma \ref{lc1}, we obtain $f(x,u_k)\ra f(x,u)$ in $L^{1}(\Om)$.

\noi Now to show that $|\na u_k|^{n-2}\na u_k \rightharpoonup |\na u|^{n-2} \na u$ weakly in $(L^{n/n-1}(\Om))^n$. First, we note that $\{|\na u_k|^{n-2}\na u_k\}$ is bounded in $L^{\frac{n}{n-1}}(\Om)$. Then, without loss of generality, we may assume that
\begin{eqnarray}
&|\na u_k|^n \lra \mu\;\mbox{in}\; D^{\prime}(\Om)\;\mbox{and}\;|\na u_k|^{n-2}\na u_k \rightharpoonup \nu \; \mbox{weakly in}\; L^{\frac{n}{n-1}}(\Om),
\end{eqnarray}
\noi where $\mu$ is a non-negative regular measure and $D^{\prime}(\Om)$ are the distributions on $\Om$.\\
\noi Let $\sigma>0$ and $\mc A_{\sigma}= \{x\in\overline{\Om}: \fa\; r>0, \mu(B_{r}(x)\cap \overline{\Om})\geq \sigma\}$. We claim that $A_{\sigma}$ is a finite set.
Suppose by contradiction that there exists a sequence of distinct points $(x_s)$ in $\mc A_{\sigma}$. Since for all $r>0$,
$\mu(B_{r}(x_s)\cap \overline{\Om})\geq \sigma$, we have that $\mu(\{x_s\})\geq \sigma$. This implies that $\mu(\mc A_{\sigma})= +\infty$, however
\[ \mu(\mc A_{\sigma})= \lim_{k\ra+\infty} \int_{\mc A_{\sigma}}|\na u_k|^{n} dx \leq C.\]
Thus $\mc A_{\sigma}=\{x_1, x_2,\cdots, x_p\}$.

\noi {\bf Assertion 1}. If we choose $\sigma>0$ such that
$\sigma^{\frac{1}{n-1}}<r_1$, then we have
\[ \lim_{k\ra \infty}\int_{K} f(x,u_k)u_k~ dx= \int_{K} f(x,u)u~ dx,\]
for any relative compact subset $K$ of $\overline{\Om} \setminus \mc A_{\sigma}$.\\
Indeed, let $x_0\in K$ and
$r_0>0$ be such that $\mu(\textbf{B}_{r_0}(x_0)\cap \oline{\Om})<\sigma$. Consider a function $\phi\in C^{\infty}_{0}(\Om,[0,1])$ such that $\phi\equiv1$ in $\textbf{B}_{\frac{r_0}{2}}(x_0)\cap \overline{\Om}$ and $\phi\equiv0$ in $\overline{\Om}\setminus (\textbf{B}_{r_0}(x_0)\cap\oline{\Om})$.
Thus
\[\lim_{k\ra\infty} \int_{\textbf{B}_{r_0}(x_0)\cap \overline{\Om}}|\na u_k|^n \phi ~dx = \int_{\textbf{B}_{r_0}(x_0)\cap \overline{\Om}}\phi d\mu \leq \mu(\textbf{B}_{r_0}(x_0)\cap \overline{\Om})<\sigma.\]
Therefore for $k\in \mb N$ sufficiently large  and $\e>0$ sufficiently small, we have
\[ \int_{\textbf{B}_{\frac{r_0}{2}}(x_0)\cap \overline{\Om}}|\na u_k|^n ~dx = \int_{\textbf{B}_{\frac{r_0}{2}}(x_0)\cap \overline{\Om}}|\na u_k|^n\phi dx \leq (1-\e)\sigma,\]
which together with implies {\small\begin{align}\label{ed9}
\int_{\textbf{B}_{\frac{r_0}{2}}(x_0)\cap \overline{\Om}}
|f(x,u_k)|^q =\int_{\textbf{B}_{\frac{r_0}{2}}(x_0)\cap
\overline{\Om}}|h(x,u_k)|^{q} e^{q|u_k|^{\frac{n}{n-1}}} \leq d
\int_{\textbf{B}_{\frac{r_0}{2}}(x_0)\cap \overline{\Om}}
e^{(1+\de)q|u_k|^{\frac{n}{n-1}}}\leq K
\end{align}}
if we choose $q>1$ sufficiently close to $1$ and $\de>0$ is small
enough such that $\frac{(1+\de)q \sigma^{\frac{1}{n-1}}}{r_1}<1$.
Now we estimate
\[\small{\int_{\textbf{B}_{\frac{r_0}{2}}(x_0)\cap \overline{\Om}}|f(x,u_k)u_k-f(x,u)u| ~dx \leq I_1+ I_2}\]
where \[\small {I_1 := \int_{\textbf{B}_{\frac{r_0}{2}}(x_0)\cap \overline{\Om}}|f(x,u_k)- f(x,u)||u| dx \;\mbox{and}\; I_2 := \int_{\textbf{B}_{\frac{r_0}{2}}(x_0)\cap \overline{\Om}}|f(x,u_k)||u_k-u|~ dx.}\]
\noi Note that, by H\"{o}lder's inequality and \eqref{ed9},
\[I_2 = \int_{\textbf{B}_{\frac{r_0}{2}}(x_0)\cap \overline{\Om}}|f(x,u_k)||u_k-u|dx \leq K \left(\int_{\Om}|u_k-u|^{q^{\prime}}\right)^{1/{q^{\prime}}}\ra 0\; \mbox{as}\; k\ra \infty.\]
Now, we claim that $I_1\ra 0$. Indeed, given $\e>0$, by density we can take $\phi\in C_{0}^{\infty}(\Om)$ such that $\|u-\phi\|_{q^{\prime}}<\e$. Thus,
{\small\begin{align*}
\int_{\textbf{B}_{\frac{r_0}{2}}(x_0)\cap \overline{\Om}}|f(x,u_k)- f(x,u)||u| & \leq \int_{\textbf{B}_{\frac{r_0}{2}}(x_0)\cap \overline{\Om}}(|f(x,u_k)||u-\phi| + |f(x,u_k)- f(x,u)||\phi|) \\
& \quad\quad +\int_{\textbf{B}_{\frac{r_0}{2}}(x_0)\cap \overline{\Om}}|f(x,u)||\phi-u|.
\end{align*}}
Applying H\"{o}lder inequality and using equation \eqref{ed9} , we have
\[\small{\int_{\textbf{B}_{\frac{r_0}{2}}(x_0)\cap \overline{\Om}}|f(x,u_k)||u-\phi| ~dx\leq \left(\int_{\textbf{B}_{\frac{r_0}{2}}(x_0)\cap \overline{\Om}}|f(x,u_k)|^q dx\right)^{1/q}\|u-\phi\|_{q^{\prime}}<\e.}\]
\noi Using Lemma \ref{lc1}, we have
\[\small{\int_{\textbf{B}_{\frac{r_0}{2}}(x_0)\cap \overline{\Om}}|f(x,u_k)- f(x,u)||\phi|~ dx \leq \|\phi\|_{\infty}\int_{\textbf{B}_{\frac{r_0}{2}}(x_0)\cap \overline{\Om}}|f(x,u_k)- f(x,u)| dx\ra 0.}\]
Also from equation \eqref{ed9}, we have
$\ds \int_{\textbf{B}_{\frac{r_0}{2}}(x_0)\cap \overline{\Om}}|f(x,u)||\phi- u| dx\ra 0,$
and hence the claim. Now to conclude Assertion 1 we use that $K$ is compact and we repeat the same procedure over a finite covering of balls.\\
\noi {\bf Assertion 2:} Let $\e_0>0$ be such that $B_{\e_0}(x_i)\cap B_{\e_0}(x_j)=\emptyset$ if $i\neq j$ and
$\Om_{\e_0}=\{x\in\bar\Om:|x-x_j|\geq\e_0, j=1,2,...,m\}$. Then
\begin{equation}\label{n7a5}
 \int_{\Om_{\e_0}}(|\nabla u_k|^{n-2}\nabla u_k-|\nabla u|^{n-2}\nabla u)(\nabla u_k-\nabla u)\ra0.
 \end{equation}
Indeed, let $0<\e<\e_0$ and $\phi\in C^\infty_c(\mb R^n)$ such that
$\phi\equiv1$ in $B_{1/2}(0)$ and $\phi\equiv0$ in $\bar\Om\setminus B_1(0)$.
Take $\psi_\e=1-\ds\sum_{j=1}^m\phi\left(\frac{x-x_j}{\e}\right)$. Then $0\leq\psi_\e\leq 1$, $\psi_\e\equiv 1$ in
 $\bar\Om_\e=\bar\Om\setminus\cup_{j=1}^m B_\e(x_j)$, $\psi_\e\equiv0$ in $\cup_{j=1}^m B_{\e/2}(x_j)$
and $\{\psi_\e u_k\}$ is bounded in $W^{1,n}_{0}(\Om)$. Now taking $v=\psi_\e u_k$ in (\ref{n7a2}) we get
\begin{equation}\label{n7a6}
  m(\|u_k\|^n)\int_\Om\left[|\nabla u_k|^n\psi_\e+|\nabla u_k|^{n-2}\nabla u_k\nabla \psi_\e u_k\right]- \int_\Om f(x,u_k)u_k\psi_\e
\leq \e_k\|\psi_\e u_k\|.
\end{equation}
Again taking $v=-\psi_\e u$ in (\ref{n7a2}) we get
\begin{align}\label{n7a7}
  m(\|u_k\|^n)\int_\Om\left[-|\nabla u_k|^{n-2}\nabla u_k\nabla u\psi_\e-|\nabla u_k|^{n-2}\right.&\left.\nabla u_k\nabla \psi_\e u\right]
+\int_\Om f(x,u_k)u\psi_\e\notag\\
&\leq \e_k\|\psi_\e u\|.
\end{align}
Also using the convexity of $t\mapsto |t|^n$ for $t\in \mb R^n$ and  $m(t)\ge m_0>0$, we have
\begin{equation}\label{n7a8}
  0\leq m(\|u_k\|^n) \int_{\Om}\left(|\nabla u_k|^{n}-|\nabla u_k|^{n-2}\nabla u_k\nabla u-|\nabla u|^{n-2}\nabla u\nabla u_k
+|\nabla u|^n \right)\psi_\e,
\end{equation}
from (\ref{n7a6}), (\ref{n7a7}) and (\ref{n7a8}) we get
{\small\begin{align*}
 0\leq m(\|u_k\|^n)\int_{\Om}&|\nabla u_k|^{n-2}\nabla u_k\nabla\psi_\e(u_k-u)+ 2\e_k\|\psi_\e u_k\|\\
&+m(\|u_k\|^n)\int_{\Om}\psi_\e|\nabla u|^{n-2}\nabla u(\nabla u-\nabla u_k)+ \int_{\Om}f(x,u_k)(u_k-u)\psi_\e.
\end{align*}}
Now as by Young's inequality, for given $\de>0$, there exists $C_\de>0$ such that
\begin{align}\label{n7a9}
m(\|u_k\|^n) \int_{\Om}|\nabla u_k|^{n-2}&\nabla u_k\nabla\psi_\e(u_k-u) \leq\delta\int_\Om|\nabla u_k|^n
 +C_\de\int_\Om|\nabla\psi_\e|^n|u_k-u|^n\nonumber\\
&\leq \de C+C_\de(\int_\Om|\nabla \psi_\e|^{nr})^{1/r}(\int_\Om|u_k-u|^{ns})^{1/s}
\end{align}
where $C$, $r$ and $s$ are positive real number such that $\ds\frac{1}{r}+\ds\frac{1}{s}=1$.
Thus using this and boundedness of $\{u_k\}$, we get
\begin{equation}\label{n7a10}
 \ds\limsup_{k\ra\infty} m(\|u_k\|^n)\int_{\Om}|\nabla u_k|^{n-2}\nabla u_k\nabla\psi_\e(u_k-u)\leq 0.
\end{equation}
Also noting that $u_k\rightharpoonup u$ weakly in $W^{1,n}_{0}(\Om)$ and $m(\|u_k\|^n)$ bounded,
we have
\begin{equation}\label{1n7a11}
  \lim_{k\ra\infty}m(\|u_k\|^n)\int_{\Om}\psi_\e|\nabla u|^{n-2}\nabla u(\nabla u-\nabla u_k)=0.
\end{equation}
By Assertion 1, taking $\mathcal{K}=\oline{\Om}_{\e/2}$ one can check that
\begin{equation}\label{n7a11}
 \lim_{k\ra\infty}\int_{\Om}f(u_k)(u_k-u)\psi_\e=0.
\end{equation}
Now from  (\ref{n7a9})-(\ref{n7a11}), (\ref{n7a5}) follows.
Since $\e_0$ is arbitrary, we get $\nabla u_k(x)\ra\nabla u(x)$ a.e in
$\Om$ and hence $|\nabla u_k|^{n-2}\nabla u_k\rightharpoonup|\nabla u|^{n-2}\nabla u$ weakly in $(L^{n/n-1}(\Om))^n$.\QED

\noi Now we define the Nehari manifold associated to the functional $J$, as
\[ \mc{N}:=\{0\not\equiv u\in W_0^{1,n}(\Omega):\langle J^{\prime}(u),u \rangle=0\}\]
and let $\ds b:=\inf_{u\in \mc{N}} J(u)$. Then we need the following to compare  $c_*$ and $b$.
\begin{Lemma}\label{lem7.3}
If condition $(f1)$ holds, then for each $x\in\Omega$, $ sf(x,s)-2nF(x,s)$ is increasing for $s\ge0$. In particular $sf(x,s)-2nF(x,s)\geq 0$ for all $(x,s)\in \Omega\times [0,\infty)$.
\end{Lemma}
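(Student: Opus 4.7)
The statement is a routine consequence of $(f4)$ (the hypothesis $(f1)$ alone would not suffice; almost certainly the author means to use the super-$(2n)$-superlinear condition $(f4)$, which tells us that $s\mapsto f(x,s)/s^{2n-1}$ is increasing on $(0,\infty)$ and vanishes at the origin). The plan is to set $g(s):=sf(x,s)-2nF(x,s)$, show $g'(s)\ge 0$, then use $g(0)=0$ to conclude $g(s)\ge 0$.

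Since $(f1)$ gives $h\in C^1$, the function $f(x,\cdot)$ is $C^1$, so I can differentiate directly:
\[
g'(s)=f(x,s)+sf_s(x,s)-2nf(x,s)=sf_s(x,s)-(2n-1)f(x,s).
\]
On the other hand, for $s>0$,
\[
\frac{d}{ds}\!\left(\frac{f(x,s)}{s^{2n-1}}\right)=\frac{sf_s(x,s)-(2n-1)f(x,s)}{s^{2n}},
\]
so $g'(s)=s^{2n}\frac{d}{ds}(f(x,s)/s^{2n-1})\ge 0$ by $(f4)$. Hence $g$ is nondecreasing on $[0,\infty)$.

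If one prefers to avoid assuming $C^1$ in $s$, the same conclusion can be obtained directly from monotonicity: for $0\le a<b$, using $f(x,s)\le (s/b)^{2n-1}f(x,b)$ for $s\le b$ (which is $(f4)$), one gets
\[
2n\int_a^b f(x,s)\,ds\le \frac{f(x,b)}{b^{2n-1}}\bigl(b^{2n}-a^{2n}\bigr)=bf(x,b)-\frac{a^{2n}}{b^{2n-1}}f(x,b),
\]
and $(f4)$ again gives $\frac{a^{2n}}{b^{2n-1}}f(x,b)\ge af(x,a)$, so
\[
2n\int_a^b f(x,s)\,ds\le bf(x,b)-af(x,a),
\]
which is exactly $g(b)\ge g(a)$.

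Finally, $h(x,0)=0$ from $(f1)$ forces $f(x,0)=h(x,0)e^{0}=0$ and hence $F(x,0)=0$, so $g(0)=0$. Combined with the monotonicity just established, this yields $g(s)\ge 0$ for all $s\ge 0$, completing both assertions of the lemma. There is no real obstacle here; the only subtlety is recognizing that the statement should invoke $(f4)$ rather than $(f1)$.
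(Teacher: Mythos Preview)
Your proof is correct and matches the paper's approach: the paper also uses the monotonicity of $s\mapsto f(x,s)/s^{2n-1}$ from $(f4)$ (you are right that the hypothesis ``$(f1)$'' in the statement is a slip), and its argument is exactly your second, integral-based computation, just with the two applications of $(f4)$ organized slightly differently. Your first, differential version is a minor variant that the paper does not write out.
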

\proof Suppose $0<s<t$. Then for each $x\in\Om$, we obtain
 \begin{align*}
sf(x,s)-2nF(x,s)&=\frac{f(x,s)}{s^{2n-1}}s^{2n}-2nF(x,t)+2n\int_s^t f(x,\tau)d\tau\\
&< \frac{f(x,t)}{t^{2n-1}}s^{2n} -2n F(x,t)+\frac{f(x,t)}{t^{2n-1}}(t^{2n}-s^{2n})\\
&\leq tf(x,t)-2nF(x,t),
\end{align*}
which completes the proof.\QED
\begin{Lemma}:
If (i) $\frac{m(t)}{t}$ is nonincreasing for $t> 0$ (ii) for each $x\in\Omega, \frac{f(x,t)}{t^{2n-1}}$ is increasing for $t>0$ hold.
Then $c_*\leq b $.
\end{Lemma}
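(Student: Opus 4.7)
The plan is to exploit the standard Nehari-mountain-pass connection: for every $u\in\mathcal{N}$, produce a path in $\Gamma$ on which $u$ is a maximum, so that $c_*\leq J(u)$, and then take the infimum over $\mathcal{N}$.

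First I would fix $u\in\mathcal{N}$ and study the scalar function $g(t):=J(tu)$ for $t>0$. A direct computation gives
\begin{equation*}
g'(t)=m(t^n\|u\|^n)\,t^{n-1}\|u\|^n-\int_\Om f(x,tu)\,u\,dx,
\end{equation*}
so that, after factoring $t^{2n-1}$ and using $(f1)$ (which makes $f(x,s)=0$ for $s\leq0$, so the integrand is supported on $\{u>0\}$),
\begin{equation*}
\frac{g'(t)}{t^{2n-1}}=\frac{m(t^n\|u\|^n)}{t^n\|u\|^n}\,\|u\|^{2n}-\int_{\{u>0\}}\frac{f(x,tu)}{(tu)^{2n-1}}\,u^{2n}\,dx.
\end{equation*}
By hypothesis (i), $t\mapsto m(t^n\|u\|^n)/(t^n\|u\|^n)$ is nonincreasing, and by (ii), $t\mapsto f(x,tu(x))/(tu(x))^{2n-1}$ is strictly increasing for each $x$ with $u(x)>0$. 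Hence $t\mapsto g'(t)/t^{2n-1}$ is strictly decreasing on $(0,\infty)$.

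Next, since $u\in\mathcal{N}$ means $\langle J'(u),u\rangle=0$, we have $g'(1)=0$, and the monotonicity above forces $g'(t)>0$ on $(0,1)$ and $g'(t)<0$ on $(1,\infty)$. Therefore $t=1$ is the unique global maximum of $g$, i.e.
\begin{equation*}
J(u)=\max_{t\geq 0}J(tu).
\end{equation*}
The mountain-pass geometry of Lemma \ref{lem7.1} (together with \eqref{n3}--\eqref{n2}) shows that $J(tu)\to -\infty$ as $t\to\infty$, so we can pick $T>1$ with $J(Tu)<0$. Define $\gamma(s):=sTu$ for $s\in[0,1]$; then $\gamma(0)=0$, $J(\gamma(1))=J(Tu)<0$, so $\gamma\in\Gamma$, and
\begin{equation*}
\max_{s\in[0,1]}J(\gamma(s))=\max_{r\in[0,T]}g(r)=g(1)=J(u).
\end{equation*}
Consequently $c_*\leq J(u)$, and taking the infimum over $u\in\mathcal{N}$ yields $c_*\leq b$.

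The only delicate point I foresee is verifying the strict monotonicity of $g'(t)/t^{2n-1}$ rigorously when $u$ changes sign, but this is handled by observing that $(f1)$ kills $f(x,\cdot)$ on $(-\infty,0]$, so the integral effectively runs only over $\{u>0\}$ where (ii) applies pointwise. Everything else is routine given Lemma \ref{lem7.1} and the hypotheses.
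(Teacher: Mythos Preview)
Your proof is correct and follows essentially the same route as the paper's: both fix $u\in\mathcal N$, analyze the fibering map $t\mapsto J(tu)$, use hypotheses (i) and (ii) to show its derivative changes sign exactly once at $t=1$, and then produce a ray-path in $\Gamma$ whose maximum is $J(u)$. The only cosmetic difference is that the paper rewrites $h'(t)$ as $t^{2n-1}$ times a difference against the $t=1$ values, whereas you show directly that $g'(t)/t^{2n-1}$ is strictly decreasing; these are equivalent formulations of the same monotonicity argument.
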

\proof Let $u\in \mc{N}$, define $h:(0,+\infty)\rightarrow \mb R$ by $ h(t)=J(tu)$. Then
\[h^{\prime}(t)=\langle J^{\prime}(tu),u\rangle=m(t^n\|u\|^n)t^{n-1}\|u\|^n-\int_\Om f(x,tu)u~dx \; \text{for all} \; t>0.\]
Since $\langle J^{\prime}(u),u\rangle=0$, we have {\small
\begin{align*} h^{\prime}(t)=&\|u\|^{n} t^{2n-1}\left(\frac{m(t^n\|u\|^n)}{t^n\|u\|^n}-\frac{m(\|u\|^n)}{\|u\|^n}\right)\\
&\quad +t^{2n-1}\int_{\Om}\left(\frac{f(x,u)}{u^{2n-1}}-\frac{f(x,tu)}{(tu)^{2n-1}}\right)u^{2n}dx.\end{align*} }
So $h^{\prime}(1)=0$, $h^{\prime}(t)\geq 0$ for $0<t<1$ and $h^{\prime}(t)<0$ for $t>1$. Hence $J(u)=\ds \max_{t\geq0}J(tu)$. Now define $g:[0,1]\rightarrow W_0^{1,n}(\Omega)$ as  $g(t)=(t_0 u)t$, where $t_0$ is such that $J(t_0 u)<0$. We have $g\in \Gamma$ and therefore
\[ c_*\leq\max_{t\in[0,1]}J(g(t))\leq \max_{t\geq 0}J(tu)=J(u).\]
  Since $u\in \mc N$ is arbitrary, $c_*\leq b$ and the proof is complete.\QED

\noi We recall the following result of  Lions \cite{li} known as higher integrability Lemma.
\begin{Lemma}\label{plc}
Let $\{v_k : \|v_k\|=1\}$ be a sequence in $W^{1,n}_{0}(\Om)$
converging weakly to a non-zero $v$. Then for every $p$ such that
$1<p<(1-\|v\|^n)^{\frac{-1}{n-1}}$,\[\sup_{k}\int_{\Om} e^{p \al_{n}
|v_k|^{\frac{n}{n-1}}}< \infty.\]
\end{Lemma}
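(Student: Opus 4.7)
The plan is to apply the classical decomposition argument due to Lions. Set $w_k := v_k - v$, so that $w_k \rightharpoonup 0$ weakly in $W^{1,n}_{0}(\Om)$, and (passing to a subsequence) $w_k \to 0$ a.e. Using the Brezis--Lieb type identity for the $L^n$-norm of gradients, we get
\[
\|w_k\|^n = \|v_k\|^n - \|v\|^n + o(1) = 1-\|v\|^n + o(1).
\]
Thus for any fixed $\eta>0$ and $k$ large enough, $\|w_k\|^n \le 1-\|v\|^n + \eta$. This is what feeds the Moser--Trudinger inequality for the renormalised $\tilde w_k := w_k/\|w_k\|$.

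For the next step I would use the convexity-type inequality: for each $\e>0$ there exists $C_\e>0$ with
\[
|a+b|^{n/(n-1)} \le (1+\e)\, |a|^{n/(n-1)} + C_\e\, |b|^{n/(n-1)} \quad \text{for all } a,b \in \mb R.
\]
Applying this to $v_k = w_k + v$ and then H\"older's inequality with conjugate exponents $q,q'>1$ gives
\[
\int_\Om e^{p\al_n |v_k|^{n/(n-1)}} dx \le \left(\int_\Om e^{pq(1+\e)\al_n |w_k|^{n/(n-1)}}\right)^{\!\!1/q} \left(\int_\Om e^{pq'C_\e \al_n |v|^{n/(n-1)}}\right)^{\!\!1/q'}.
\]
The second factor is finite by the Trudinger--Moser inequality \eqref{eqc002} applied to a scalar multiple of the fixed function $v\in W^{1,n}_0(\Om)$ (one just covers it, up to a constant, by some multiple of $u/\|u\|$).

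For the first factor, write $w_k = \|w_k\|\,\tilde w_k$ with $\|\tilde w_k\|=1$, so the integrand becomes $\exp\bigl(pq(1+\e)\|w_k\|^{n/(n-1)} \al_n |\tilde w_k|^{n/(n-1)}\bigr)$. To apply \eqref{eqc002} to $\tilde w_k$ uniformly, we need
\[
pq(1+\e)\,\|w_k\|^{n/(n-1)} \le 1,
\]
which for large $k$ is implied by $pq(1+\e)(1-\|v\|^n+\eta)^{1/(n-1)} \le 1$. Since by hypothesis $p(1-\|v\|^n)^{1/(n-1)}<1$ strictly, we have slack: first fix $q>1$ close enough to $1$ so that $pq(1-\|v\|^n)^{1/(n-1)}<1$, then pick $\e,\eta>0$ small enough to preserve the inequality. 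This delivers the uniform bound $\sup_k \int_\Om e^{p\al_n |v_k|^{n/(n-1)}}dx <\infty$.

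The only delicate point is the simultaneous selection of $q$, $\e$ and $\eta$ satisfying all the constraints; everything else is a standard packaging of Trudinger--Moser plus Brezis--Lieb. The strict inequality $p<(1-\|v\|^n)^{-1/(n-1)}$ is precisely what buys the room to accommodate the losses coming from the convexity split and from the convergence $\|w_k\|^n \to 1-\|v\|^n$.
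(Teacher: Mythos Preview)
The paper does not prove this lemma at all; it simply recalls it from Lions~\cite{li}. So there is no ``paper's approach'' to compare against, only the original concentration--compactness argument.

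Your sketch follows the presentation one often sees quoted, but it has a genuine gap at the Brezis--Lieb step. You pass to a subsequence so that $w_k\to 0$ a.e.\ and then invoke a ``Brezis--Lieb type identity for the $L^n$-norm of gradients'' to get $\|w_k\|^n = 1-\|v\|^n + o(1)$. The norm here is $\|w_k\|=\|\nabla w_k\|_{L^n}$, and Brezis--Lieb for that quantity needs a.e.\ convergence of the \emph{gradients} $\nabla v_k\to\nabla v$, not merely of the functions. Weak convergence in $W^{1,n}_0(\Om)$ yields $v_k\to v$ a.e.\ (compact Sobolev embedding) but gives no pointwise information on $\nabla v_k$. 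For $n=2$ the space is Hilbert and the identity $\|v_k-v\|^2=1-\|v\|^2+o(1)$ is automatic, but for $n\ge 3$ even the one-sided bound $\limsup_k\|\nabla(v_k-v)\|_n^n\le 1-\|v\|^n$ can fail. A one-dimensional model already shows this: in $L^3$, take $g=-\e$ constant and $h_k$ piecewise oscillating between $1$ on one third of each period and $-\tfrac12$ on the rest; then $h_k\rightharpoonup 0$, yet $\|h_k\|_3^3=\tfrac{5}{12}>\|g+h_k\|_3^3-\|g\|_3^3=\tfrac{5}{12}-\tfrac{\e}{2}+O(\e^2)$. This lifts to gradients of $W^{1,3}_0$ functions via $w_k(x)=k^{-1}F(kx_1)\psi(x)$ with $F'=f$ the periodic profile. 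Hence the inequality $\|w_k\|^n\le 1-\|v\|^n+\eta$ that you feed into Moser--Trudinger is simply not available from the hypotheses.

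Lions' own proof avoids this entirely: it goes through the concentration function for $|\nabla v_k|^n$ and rules out concentration of mass above the threshold $\al_n^{n-1}$ precisely because the weak limit $v$ is nonzero. In applications, including this paper, the lemma is usually applied to Palais--Smale sequences for which a.e.\ gradient convergence has been \emph{separately} established (cf.\ Lemma~\ref{lem714}); in that restricted setting your argument would go through. But as an independent proof of the lemma as stated, the Brezis--Lieb step is the missing idea, not a routine detail.
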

\noi {\bf Proof of Theorem \ref{thm711}:} Let $\{u_k\}$ be a Palais-Smale sequence at level $c_*$. That is $J(u_k)\rightarrow c_*$ and $J^\prime (u_k) \rightarrow 0$. Then by Lemma 2.4 and Lemma 2.7, there exists $u_0\in W^{1,n}_{0}(\Om)$ such that $u_k \rightharpoonup u_0$ weakly in $W^{1,n}_{0}(\Om)$, $\na u_k (x) \rightarrow \na u_0(x)$ a.e. in $\Om$. Now we claim that $u_0$ is the required positive solution.\\
\noi \textbf{claim 1:} $u_0>0$ in $\Omega$.\\
 \proof As $\{u_k\}$ is bounded, so up to a subsequence $\|u_k\|\rightarrow \rho_0>0$. Moreover, condition $J^{\prime}(u_k)\rightarrow 0$ and Lemma \ref{lem714} implies that
 \begin{equation}\label{n7a12}
 m(\rho_0^n)\int_\Omega |\nabla u_0|^{n-2}\nabla u_0\nabla v~ dx=\int_\Om f(x,u_0)v ~dx\;  \text{for all} \; v\in W_0^{1,n}(\Omega).
 \end{equation}
  That is $u_0$ satisfies $ -\De_n u_0=\frac{1}{m(\rho_0^n)}f(x,u_0)\;\mbox{in}\;\Om,\quad u=0\;\mbox{on}\;\partial\Om.$
 Using the growth condition of $f(x,t)$ and Trudinger-Moser inequality, we get $f(.,u_0)\in L^p(\Omega)$ for all $1\leq p\le \infty$. Therefore by regularity theory $u_0\in C^{1,\al}(\overline{\Omega})$ and hence by strong maximum principle, we get $u_0>0$ in $\Omega$ and hence the claim.

\noi \textbf{claim 2:} $\ds m(\|u_0\|^n)\|u_0\|^n\geq \int_\Om f(x,u_0)u_0 dx$.\\
\proof Suppose by contradiction that $m(\|u_0\|^n)\|u_0\|^n< \int_\Om f(x,u_0)u_0 ~dx$. That is, $\langle J^{\prime}(u_0),u_0\rangle<0.$
 Using \eqref{n1} and Sobolev imbedding, we can see that $\langle J^{\prime}(tu_0),u_0\rangle>0$ for t sufficiently small. Thus there exist $\sigma\in(0,1)$ such that $\langle I^{\prime}(\sigma u_0),u_0\rangle=0$. That is,  $\sigma u_0\in \mc N$. Thus according to  Lemma \ref{lem7.3},
 \begin{align*}
 c_*&\leq b \leq J(\sigma u_0)=J(\sigma u_0)-\frac{1}{2n}\langle J^{\prime}(\sigma u_0),\sigma u_0\rangle\\
 &=\frac{M(\|\sigma u_0\|^n)}{n}-\frac{m(\|\sigma u_0\|^n)\|\sigma u_0\|^n}{2n}+\int_\Om \frac{(f(x,\sigma u_0)\sigma u_0-2nF(x,
 \sigma u_0))}{2n}\\
 &<\frac{1}{n}M(\| u_0\|^n)-\frac{1}{2n}m(\|u_0\|^n)\| u_0\|^n+\frac{1}{2n}\int_\Om (f(x, u_0)u_0-2nF(x,u_0))
 \end{align*}
 By lower semicontinuity of norm and Fatou's Lemma, we get
 \begin{align*}
 c_*&<  \liminf_{k\rightarrow\infty}\frac{1}{n}\left(M(\|u_k\|^n)-\frac{1}{2}m(\|u_k\|^{n})\| u_k\|^{n}\right)\\
 &\quad+\liminf_{k\rightarrow\infty}\frac{1}{2n}\int_\Om [f(x, u_k)u_k-2nF(x,u_k)]dx\\
 &\leq \lim_{k\rightarrow\infty}[J(u_k)-\frac{1}{2n}\langle J^{\prime}(u_k),u_k\rangle]=c_*,
 \end{align*}
 which is a contradiction and the claim 2 is proved.\\
\noi \textbf{Claim 3:} $J(u_0)=c_*$.\\
\proof Using $ \int_\Om F(x,u_k)\rightarrow \int_\Om F(x,u_0)$ and lower semicontinuity of norm we have $J(u_0)\leq c_*$.
 We are going to show that the case $J(u_0)<c_*$ can not occur.\\
 Indeed,  if $J(u_0)<c_*$ then $\|u_0\|^n<\rho_0^n.$ Moreover,
 \begin{equation}\label{7a3}
   \frac{1}{n}M(\rho_0^n)=\lim_{k\rightarrow\infty}\frac{1}{n}M(\|u_k\|^n)=c_*+\int_\Om F(x,u_0)dx,
 \end{equation}
  which implies  $\ds \rho_0^n = M^{-1}(nc_*+n\int_\Om F(x,u_0)dx)$.
  Next defining $v_k=\frac{u_k}{\|u_k\|}$ and $v_0=\frac{u_0}{\rho_0}$, we have $v_k\rightharpoonup v_0$ in $W_0^{1,n}(\Omega)$ and $\|v_0\|<1$. Thus by Lion's lemma \ref{plc},
\begin{equation}\label{7a4}
 \sup_{k\in \mb N}\int_\Omega e^{p|v_k|^{\frac{n}{n-1}}}~dx<\infty\;\;  \text{for all}\;  1<p<\frac{\alpha_n}{(1-\|v_0\|^n)^\frac{1}{n-1}}.
 \end{equation}
\noi On the other hand, by Assertion 2, \eqref{7a0} and Lemma \ref{lem7.3}, we have
\[  J(u_0)\ge \frac{M(\|u_0\|^2)}{n}-\frac{m(\|u_0\|^2)\|u_0\|^2}{2n} +\int_\Om \frac{(f(x,u_0)u_0-2n F(x,u_0)}{2n}.\]
So, $J(u_0)\geq 0$. Using this together with Lemma \ref{lem7.2} and the equality,
$n(c_*-J(u_0))=M(\rho_{0}^{n})-M(\|u_0\|^n)$
we get
$M(\rho_{0}^n) \le n c_*+M(\|u_0\|^n)<M(\al_{n}^{n-1})+M(\|u_0\|^n)$
and therefore by $(m1)$
\begin{equation}\label{7a6}
\rho_{0}^{n}< M^{-1}\left(M(\al_{n-1}^{n})+M(\|u_0\|^n)\right)\le \al_{n}^{n-1}+\|u_0\|^n.
\end{equation}
Since $\rho_{0}^{n}(1-\|v_0\|^n)=\rho_{0}^{n}-\|u_0\|^n$, from \eqref{7a6} it follows that
\[ \rho_{0}^{n}< \frac{\al_{n}^{n-1}}{1-\|v_0\|^n}.\]
Thus, there exists $\ba>0$ such that $ \|u_k\|^{\frac{n}{n-1}} < \ba < \frac{\al_{n}}{(1-\|v_0\|^n)^{\frac{1}{n-1}}}$ for $k$ large. We can choose $q>1$ close to $1$ such that $q \|u_k\|^{\frac{n}{n-1}} \le  \ba < \frac{\al_{n}}{(1-\|v_0\|^n)^{\frac{1}{n-1}}}$ and using \eqref{7a4}, we conclude that for $k$ large
\[ \int_\Om e^{q |u_k|^{n/n-1}} dx \le \int_\Om e^{\ba |v_k|^{n/n-1}} \le C.\]
Now by standard calculations, using H\"{o}lder's inequality and weak convergence of  $\{u_k\}$ to $u_0$, we get $\int_\Om f(x,u_k) (u_k-u_0) \rightarrow 0$ as $k\rightarrow \infty$. Since $\langle J^\prime (u_k), u_k-u_0\rangle \rightarrow 0$, it follows that
\begin{equation} \label{na7new}
m(\|u_k\|^n) \int_{\Om} |\na u_k|^{n-2} \na u_k (\na u_k-\na u_0) \rightarrow 0.
\end{equation}
\noi On the other hand, using $u_k\rightharpoonup u_0$ weakly and boundedness of $m(\|u_k\|^n)$,
\begin{equation}\label{na8new}
m(\|u_k\|^n) \int_{\Om} |\na u_0|^{n-2} \na u_0 (\na u_k - \na u_0)\ra 0\; \mbox{as}\; k\ra\infty.
\end{equation}
Subtracting \eqref{na8new} from \eqref{na7new}, we get
\[m(\|u_k\|^{n}) \int_\Om (|\na u_k|^{n-2}\na u_k - |\na u_0|^{n-2} \na u_0)\cdot (\na u_k -\na u_0) \rightarrow 0 \]
as $k\rightarrow \infty$. Now using this and the following
inequality
\begin{align}\label{11e2}
|a-b|^l\leq 2^{l-2}(|a|^{l-2}a-|b|^{l-2}b)(a-b)\;\mbox{for all}\; a,b\in \mb R^n\;\mbox{and}\; l\geq 2,
\end{align}
 with $a=\na u_k$ and $b=\na u_0$, we obtain
\[m(\|u_k\|^n)\int_{\Om}|\na u_k- \na u_0|^n \ra 0\;\mbox{as}\; k\ra\infty.\]
Since $m(t)\ge m_0$, we obtain $u_k\ra u$ strongly in $W^{1,n}_{0}(\Om)$ and hence $\|u_k\| \rightarrow \|u_0\|$.
Therefore, $J(u_0)=c_*$ and hence the claim.

\noi Now By Assertion 3 and \eqref{7a3} we can see that $M(\rho_{0}^{n})=M(\|u_0\|^n)$ which shows that $\rho_{0}^{n}=\|u_0\|^n$. Hence by \eqref{n7a12} we have
\[  m(\|u_0\|^n) \int_\Om |\na u_0|^{n-2}\na u_0 \na v~ dx =\int_\Om f(x,u_0) v ~dx, \; \text{for all} \; v\in W^{1,n}_{0}(\Om).\]
Thus, $u_0$ is a solution of $(\mc M)$. \QED

\section{Convex-Concave type nonlinearities}
\setcounter{equation}{0}
In this section, we study the existence and multiplicity of  solutions for the following problem
$$ (P_{\la, M})\quad \left\{
\begin{array}{lr}
 \quad  -m(\int_{\Om}|\na u|^n)\De_n u = \la h(x)|u|^{q-1}u+ u|u|^{p} ~ e^{|u|^{\ba}} \; \text{in}\;
\Om \\
\quad \quad \quad \quad u \geq 0 \; \mbox{in}\; \Om,\quad u\in W^{1,n}_{0}(\Om),\\
 \quad \quad\quad \quad\quad u =0\quad\quad \text{on} \quad \partial \Om
 \end{array}
\right.
$$
 where  $0< q<n-1< 2n-1< p+1$, $\ba\in (1,\frac{n}{n-1}]$ and $\la>0$.
 Let $\gamma=\frac{n}{n-q-1}$, $k=\frac{p+2+\ba}{q+1}>1$ and $k^\prime=\frac{k}{k-1}$. We assume the following:
 \begin{enumerate}
 \item[$(A1)$] $m(s)=as+b$, where $a,$ $b>0$.
\item[$(A2)$] $\ds h\in L^{\ga}(\Om)$, $h^{+}\not\equiv 0$, $h$ can be indefinite and vanish in some open subset of $\Om$.
\end{enumerate}
\noi We show the following existence and multiplicity result in the subcritical case:
\begin{Theorem}\label{zth11}
Let $\ba\in \left(1,\frac{n}{n-1}\right)$. Then there exists $\la_0>0$ such that
for $\la\in(0,\la_0)$, $(P_{\la,M})$ admits at least two
solutions.
\end{Theorem}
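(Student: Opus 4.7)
The plan is to use the Nehari manifold / fibering map method. The strict subcriticality $\ba<\frac{n}{n-1}$ makes the Trudinger--Moser embedding into $L^{r}(\Om,e^{|u|^{\ba}}dx)$ compact, so I can avoid the concentration--compactness machinery of Section~2 and argue by direct minimization. I would work with the energy
$$J_\la(u)=\frac{1}{n}M(\|u\|^n)-\frac{\la}{q+1}\int_\Om h(x)|u|^{q+1}\,dx-\int_\Om F(u)\,dx,\qquad F(u)=\int_0^u s|s|^{p}e^{|s|^{\ba}}\,ds,$$
whose critical points are the weak solutions of $(P_{\la,M})$, and with the Nehari manifold $\mc{N}_\la=\{u\ne 0:\ld J_\la'(u),u\rd=0\}$.

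Next I would analyse the fibering map $\phi_u(t)=J_\la(tu)$. Using $m(s)=as+b$, the equation $\phi_u'(t)=0$ reads
$$bt^{n-1}\|u\|^{n}+at^{2n-1}\|u\|^{2n}=\la t^{q}\!\int_\Om h|u|^{q+1}\,dx+t^{p+1}\!\int_\Om|u|^{p+2}e^{t^{\ba}|u|^{\ba}}\,dx,$$
where the four exponents satisfy $q<n-1<2n-1<p+1$. For $u$ with $\int h|u|^{q+1}>0$, a scaling argument shows $\phi_u$ admits exactly two positive critical points $0<t_1(u)<t_2(u)$ --- a local minimum and a local maximum --- provided $\la$ is below a threshold $\la_0$. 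Splitting $\mc{N}_\la=\mc{N}_\la^{+}\cup\mc{N}_\la^{-}\cup\mc{N}_\la^{0}$ according to the sign of $\phi_u''(1)$, combining $\phi_u'(1)=\phi_u''(1)=0$ produces incompatible upper and lower bounds on $\|u\|$ when $\la<\la_0$, so $\mc{N}_\la^{0}=\emptyset$ in this range.

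Then I would minimise $J_\la$ separately on $\mc{N}_\la^{+}$ and on $\mc{N}_\la^{-}$. On $\mc{N}_\la^{+}$ the infimum is negative (take any $u$ with $\int h|u|^{q+1}>0$ and let $t\to 0^{+}$ in $\phi_u$); on $\mc{N}_\la^{-}$ it is strictly positive and bounded above. The leading $a\|u\|^{2n}/2$ term in $M(\|u\|^n)$ together with the constraint $\phi_u'(1)=0$ makes $J_\la$ coercive on each component, so minimising sequences are bounded. Their weak limits, combined with the compact subcritical Trudinger--Moser embedding and Lemma~\ref{lc1}, converge strongly enough to pass to the limit in both the $h$-term and the exponential integral; the limit therefore lies in the same component of $\mc{N}_\la$ and attains its infimum. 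Because $\mc{N}_\la^{0}=\emptyset$, the Lagrange multiplier in each constrained minimisation must vanish, so the minimisers $u_1\in\mc{N}_\la^{+}$ and $u_2\in\mc{N}_\la^{-}$ are free critical points of $J_\la$. Replacing $u_i$ by $|u_i|$ preserves membership in its Nehari component (both $|u|^{q+1}$ and $F$ are even), so the solutions are non-negative; the strong maximum principle for the $n$-Laplacian then gives positivity, and $J_\la(u_1)<0<J_\la(u_2)$ distinguishes them.

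The main obstacle is the fibering analysis in the second step: the Kirchhoff nonlocality introduces the extra power $t^{2n-1}$, turning $\phi_u'(t)=0$ into a four-term balance rather than the classical three-term one, and $\la_0$ has to be chosen so that the two positive roots $t_1(u)<t_2(u)$ remain uniformly separated for $u$ ranging over the unit sphere of $W^{1,n}_0(\Om)$. The subcriticality $\ba<n/(n-1)$ is needed a second time here, supplying the uniform Orlicz estimate on $\int|u|^{p+2}e^{|u|^{\ba}}$ that controls the supercritical right-hand side along minimising sequences and ensures strong convergence of the exponential term.
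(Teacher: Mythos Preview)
Your overall strategy---Nehari manifold, fibering maps $\phi_u(t)=J_\la(tu)$, splitting $\mc N_\la=\mc N_\la^+\cup\mc N_\la^-\cup\mc N_\la^0$, minimising separately on $\mc N_\la^\pm$, and exploiting the compact subcritical Trudinger--Moser embedding---is exactly the route taken in the paper. The authors prove Theorem~\ref{zth11} by assembling Lemmas~\ref{zle34} and~\ref{zle03}, which in turn rest on the fibering analysis (Lemmas~\ref{zle3}--\ref{zle4}), coercivity (Theorem~\ref{zth1}), and the Lagrange-multiplier observation (Lemma~\ref{zle1}).

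There is one genuine technical difference worth flagging. You propose to minimise directly and then say ``the limit therefore lies in the same component of $\mc N_\la$.'' The paper does \emph{not} argue this way: it applies Ekeland's variational principle on $\mc N_{\la,M}$ (resp.\ $\mc N_{\la,M}^-$) and then uses the implicit-function Lemma~\ref{zle31} to manufacture an honest Palais--Smale sequence, i.e.\ to show $\|J_{\la,M}'(u_k)\|_*\to0$ (Proposition~\ref{zpro1}). Only then, testing $J_{\la,M}'(u_k)$ against $u_k-u_\la$ and combining with the compactness of the nonlinear terms, do they obtain
\[
m(\|u_k\|^n)\int_\Om(|\na u_k|^{n-2}\na u_k-|\na u_\la|^{n-2}\na u_\la)\cdot(\na u_k-\na u_\la)\,dx\to0,
\]
hence $u_k\to u_\la$ strongly in $W^{1,n}_0(\Om)$. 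This strong convergence is what forces $u_\la$ to sit in the Nehari set; compactness of the $h$-term and of $\int g(u_k)u_k$ alone does not control $\|u_k\|$, and with a Kirchhoff coefficient $m(\|u\|^n)$ you cannot pass to the limit in the equation---or conclude $u_\la\in\mc N_\la$---without it. Your sketch can be repaired either by inserting the Ekeland step, or by a fibering-projection argument (project the weak limit back onto $\mc N_\la^\pm$ and compare energies), but as written the sentence ``the limit lies in the same component'' is the gap.

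Two smaller remarks. First, the paper's proof that $\mc N_{\la,M}^0=\{0\}$ is not a bare ``incompatible bounds on $\|u\|$'' argument: it goes through the auxiliary set $\La$ of Lemma~\ref{zle3} and the quantitative lower bound $\La_m>0$, which is where the explicit threshold $\la_0$ comes from. Second, the paper does not assert $J_\la(u_2)>0$; the two solutions are distinguished simply by $u_1\in\mc N_\la^+$ and $u_2\in\mc N_\la^-$.
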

\noi In the critical case, we show the following existence result:
 \begin{Theorem} \label{zth12}
Let $\ba=\frac{n}{n-1}$, then there exist $\la_{00}>0$ such that for $\la\in (0,\la_{00})$, $(P_{\la,M})$ admits a solution.
\end{Theorem}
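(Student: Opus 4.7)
The plan is to obtain the solution via constrained minimization of $J_\lambda$ on the ``concave'' component $\mathcal{N}_\lambda^+$ of the Nehari manifold; the resulting infimum is strictly negative, and after shrinking $\lambda_{00}$ if necessary the minimizing sequence stays small enough in $W_0^{1,n}(\Om)$ that the critical exponent $\beta=\frac{n}{n-1}$ causes no loss of compactness. The working functional is
$J_\lambda(u)=\tfrac{1}{n}M(\|u\|^n)-\tfrac{\lambda}{q+1}\int_\Om h|u|^{q+1}\,dx-\int_\Om G(u)\,dx$,
with $G(s)=\int_0^s \tau|\tau|^p e^{|\tau|^\beta}\,d\tau$. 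Setting $\phi_u(t)=J_\lambda(tu)$, the Nehari manifold $\mathcal{N}_\lambda=\{u\ne 0:\langle J_\lambda'(u),u\rangle=0\}$ splits as $\mathcal{N}_\lambda^+\cup\mathcal{N}_\lambda^-\cup\mathcal{N}_\lambda^0$ via the sign of $\phi_u''(1)$. Under $(A1)$ and the ordering $0<q+1<n<2n<p+2$, for any direction $u$ with $\int h|u|^{q+1}>0$ the fiber map $\phi_u$ is negative near $0$, rises to a strict local minimum (yielding $\mathcal{N}_\lambda^+$), falls to a strict local maximum (yielding $\mathcal{N}_\lambda^-$), and tends to $-\infty$.

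First I would prove that $\mathcal{N}_\lambda^0=\emptyset$ for $\lambda\in(0,\lambda_{00})$ after choosing $\lambda_{00}$ suitably small. The two simultaneous conditions $\phi_u'(1)=0$ and $\phi_u''(1)=0$ force an algebraic inequality that, once $\int h|u|^{q+1}$ is bounded by H\"older (using $h\in L^\gamma$ with $\gamma=\frac{n}{n-q-1}$, so $\int h|u|^{q+1}\le\|h\|_\gamma\|u\|_{q+1}^{q+1}\le C\|u\|^{q+1}$) and the exponential term is bounded by Moser-Trudinger \eqref{eqc002}, yields a lower bound on $\lambda$ depending only on $n,p,q,\beta,|\Om|,\|h\|_\gamma$. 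Choosing $\lambda_{00}$ below this threshold excludes degeneracy, so $\mathcal{N}_\lambda^+$ is a $C^1$ submanifold and constrained critical points of $J_\lambda|_{\mathcal{N}_\lambda^+}$ are free critical points of $J_\lambda$. Next, $\theta_\lambda^+:=\inf_{\mathcal{N}_\lambda^+}J_\lambda<0$ follows by picking (using $(A2)$) some $u$ with $\int h|u|^{q+1}>0$ and using the expansion $\phi_u(t)\sim -\tfrac{\lambda t^{q+1}}{q+1}\int h|u|^{q+1}$ as $t\to 0^+$ to produce a point $t_*u\in\mathcal{N}_\lambda^+$ with negative energy. Ekeland's variational principle applied on the complete set $\overline{\mathcal{N}_\lambda^+}$ then provides a Palais-Smale sequence $\{u_k\}\subset\mathcal{N}_\lambda^+$ with $J_\lambda(u_k)\to\theta_\lambda^+$ and $J_\lambda'(u_k)\to 0$; boundedness in $W_0^{1,n}(\Om)$ is automatic from the $\mathcal{N}_\lambda^+$ structure.

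The hard part, and the only place where the critical exponent $\beta=\tfrac{n}{n-1}$ truly bites, is passing to the limit in the exponential term. Because $\theta_\lambda^+\to 0^-$ as $\lambda\to 0$, by shrinking $\lambda_{00}$ once more I can ensure $\|u_k\|^{n/(n-1)}$ is bounded strictly below $\alpha_n/q^\ast$ for some $q^\ast>1$; Moser-Trudinger \eqref{eqc002} then gives a uniform $L^{q^\ast}$ bound on $\{e^{|u_k|^{n/(n-1)}}\}$, whence $\{u_k|u_k|^p e^{|u_k|^\beta}\}$ is equi-integrable in some $L^s$ with $s>1$. Combined with the pointwise convergence $\nabla u_k\to\nabla u_0$ a.e.\ obtained by the concentration-compactness argument of Lemma \ref{lem714} (which applies equally at $\beta=\frac{n}{n-1}$), I can pass to the limit in the weak formulation via Vitali's convergence theorem, so the weak limit $u_0$ is a weak solution of $(P_{\lambda,M})$. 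Since $J_\lambda(u_0)\leq\theta_\lambda^+<0=J_\lambda(0)$, $u_0\not\equiv 0$; because $J_\lambda$ is even in $u$, we may replace $u_k$ by $|u_k|$ to arrange $u_0\geq 0$, and standard $C^{1,\alpha}$ regularity together with the strong maximum principle (as in the proof of Theorem \ref{thm711}) give the claimed solution for every $\lambda\in(0,\lambda_{00})$.
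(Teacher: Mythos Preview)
Your overall strategy---minimize on the Nehari component $\mc N_\la^{+}$, get a bounded Palais--Smale sequence via Ekeland, and recover compactness at the critical exponent by forcing the sequence to have small norm---is correct, but it is \emph{not} the route the paper takes for the compactness step. The paper proves a separate compactness lemma (Lemma~\ref{cpt}) of concentration--compactness type: it shows that any Palais--Smale sequence at a level strictly below $\frac{1}{2n}m_0\al_n^{n-1}-C\la^{\frac{p+2+\ba}{p+1-q+\ba}}$ converges strongly, by analysing the possible Dirac masses of the defect measure $\mu_1$ and ruling out concentration. The threshold on $\la_{00}$ is then chosen so that the Nehari infimum $\theta_{\la,M}$ sits below this level. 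Your argument bypasses this machinery entirely: you use that $\theta_\la^{+}\to 0^-$ (which follows from Theorem~\ref{zth1}, $\theta_{\la,M}\ge -C\la^{k/(k-1)}$) together with the Nehari identity to force $\|u_k\|$ small, whence Moser--Trudinger gives uniform $L^{q^*}$-integrability of $e^{|u_k|^{n/(n-1)}}$ directly. This is more elementary and, once $\|u_k\|$ is small, even makes the appeal to Lemma~\ref{lem714} superfluous: the standard argument $\langle J_\la'(u_k),u_k-u_0\rangle\to 0$ plus $\int_\Om g(u_k)(u_k-u_0)\to 0$ gives strong convergence outright.

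The one step you should make explicit is the implication ``$\theta_\la^{+}$ close to $0$ $\Rightarrow$ $\|u_k\|$ small''. This is not automatic from the energy level alone; it needs the Nehari constraint. Concretely, from \eqref{zeq3} and \eqref{1m} one has, for $u_k\in\mc N_{\la,M}$,
\[
\frac{b}{2n}\|u_k\|^n \le J_{\la,M}(u_k)+\la\left(\frac{1}{q+1}-\frac{1}{2n}\right)\int_\Om h|u_k|^{q+1}\le J_{\la,M}(u_k)+C\la\|u_k\|^{q+1}.
\]
Coercivity (Theorem~\ref{zth1}) gives a bound $\|u_k\|\le R$ uniform in $\la\le 1$, so $\frac{b}{2n}\|u_k\|^n\le J_{\la,M}(u_k)+C\la R^{q+1}$, and for $k$ large this is at most $2C\la R^{q+1}$. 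Hence $\|u_k\|^{n/(n-1)}\le (4nC\la R^{q+1}/b)^{1/(n-1)}$, which can be made $<\al_n/q^*$ by shrinking $\la_{00}$. With this in hand your argument goes through; the trade-off is that your $\la_{00}$ may be smaller than the paper's, while the paper's concentration--compactness route is heavier but yields a more explicit threshold tied to $\al_n$.
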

\subsection{The Nehari manifold and fibering maps }
\noi The Euler functional associated with the problem $(P_{\la, M })$ is $J_{\la,M} :W^{1,n}_{0}(\Om) \lra \mb{R}$ defined as
\begin{equation}\label{zeq1}
J_{\la,M}(u)= \frac{1}{n} M(\|u\|^n)- \frac{\la}{q+1} \int_{\Om}h(x)|u|^{q+1} dx - \int_{\Om}
G(u) dx,
\end{equation}
where $g(u)=u|u|^{p}e^{|u|^{\ba}}$, $\ds G(u)=\int_{0}^{u} g(s)
ds$ and $\ds M(u)=\int_{0}^{u} m(s) ds$, .
\begin{Definition}
We say that $u\in W^{1,n}_{0}(\Om)$ is a weak solution of
$(P_{\la,M})$ if for all $\phi \in W^{1,n}_{0}(\Om)$, we have
\begin{align}\label{zdef1}
\ds m(\|u\|^n)\int_{\Om}|\na u|^{n-2}\na u \na \phi dx = \int_{\Om}g(u)\phi dx +\la \int_{\Om}
h(x)|u|^{q-1}u\phi dx.
\end{align}
\end{Definition}
\noi For $u\in W^{1,n}_{0}(\Om)$, we define the fiber map $\phi_{u, M}: \mb
R^+\ra \mb R$ as
\begin{align*}
\phi_{u,M}(t) &= J_{\la,M}(tu) = \frac{t^n}{n} M(\|u\|^n)- \frac{\la
 t^{q+1}}{q+1} \int_{\Om}h(x)|u|^{q+1}dx -\int_{\Om} {G(tu)} dx.
 \end{align*}
 Also
 \begin{align*}
\phi_{u,M}^{\prime}(t) &= t^{n-1} m(\|u\|^n)- {\la
 t^{q}} \int_{\Om}h(x)|u|^{q+1}dx - \int_{\Om}{g(tu)
u} dx,\\
\phi_{u,M}^{\prime\prime}(t) &= (n-1) t^{n-2} m({\|tu\|^n})\|u\|^n+ nt^{2n-2} m^{\prime}(\|tu\|^{n})\|u\|^{2n}\\
&\quad - q \la t^{q-1} \int_{\Om}h(x)|u|^{q+1}dx  - \int_{\Om}{g^{\prime}(tu)u^2}.
\end{align*}

\noi It is easy to see that the energy functional $J_{\la,M}$ is not
bounded below on the space $W^{1, n}_{0}(\Om)$. But we will show that it is bounded below
on an appropriate subset of $W^{1, n}_{0}(\Om)$ and a minimizer on
subsets of this set gives rise to solutions of $(P_{\la, M})$. In
order to obtain the existence results, we define the Nehari
manifold
\begin{equation*}
\mc N_{\la,M}:= \left\{u\in W^{1, n}_{0}(\Om): \ld
J_{\la,M}^{\prime}(u),u\rd=0 \right\}=\left\{u\in W^{1, n}_{0}(\Om) :
\phi_{u,M}^{\prime}(1)=0\right\}
\end{equation*}
where $\ld\;,\; \rd$ denotes the duality between $W^{1,n}_{0}(\Om)$
and its dual space. Therefore $u\in \mathcal N_{\la,M}$ if and only if
\begin{equation}\label{zeq2}
m(\|u\|^n)- \la \int_{\Om}h(x)|u|^{q+1} dx - \int_{\Om}{g(u) u}dx =0
.
\end{equation}
We note that $\mathcal N_{\la,M}$ contains every solution of
$(P_{\la, M})$. One can easily see that $tu\in \mathcal N_{\la,M}$ if and only if
$\phi_{u,M}^{\prime}(t)=0$ and in particular, $u\in \mc N_{\la,M}$ if
and only if $\phi_{u,M}^{\prime}(1)=0$. Also
\begin{align*}
\mathcal N_{\la,M}^{\pm}&:= \left\{u\in \mc N_{\la,M}:
\phi_{u,M}^{\prime\prime}(1)
\gtrless0\right\} =\left\{tu\in W^{1, n}_{0}(\Om) : \phi_{u,M}^{\prime}(t)=0,\; \phi_{u,M}^{\prime\prime}(t)\gtrless  0\right\},\\
\mathcal N_{\la,M}^{0}&:= \left\{u\in \mc N_{\la,M}:
\phi_{u,M}^{\prime\prime}(1) = 0\right\}=\left\{tu\in W^{1,n}_{0}(\Om) : \phi_{u,M}^{\prime}(t)=0,\; \phi_{u,M}^{\prime\prime}(t)= 0\right\}.
\end{align*}
Let $H(u)=\int_{\Om} h|u|^{q+1} dx$. Then we define
$H^{\pm}:=\{u\in W^{1,n}_{0}(\Om): H(u)\gtrless 0\}$, $H_{0}:=\{u\in W^{1,n}_{0}(\Om): H(u)= 0\}$,
 and $H^{\pm}_{0}: =H^{\pm}\cup H_{0}$.\\

\noi Now we describe the behavior of the fibering map $\phi_{u,M}$
according to the sign of $H(u)$.

\noi Case 1: $u\in H^{-}_{0}$.\\
In this case, firstly we define $\psi_{u}: \mb R^{+} \lra \mb R$ by
\begin{equation}\label{z1}
\psi_{u}(t)= t^{n-1-q} m(\|tu\|^n)-  t^{-q}\int_{\Om}g(tu)u dx.
\end{equation}
Clearly, for $t>0$, $tu\in \mc N_{\la,M}$ if and only if $t$ is a
solution of
 $\psi_{u}(t)={\la} \int_{\Om}h(x)|u|^{q+1}.$
 \begin{align}
\psi_{u}^{\prime}&(t) = (n- 1-q) t^{(n-2-q)} m(\|tu\|^n)\|u\|^n + n t^{2n-2-q} m^{\prime}(\|tu\|^n)\|u\|^{2n} - t^{-q}
\int_{\Om}g^{\prime}(tu)u^2 \label{zeq03}\\
=&(2n- 1-q) t^{2n-2-q} a \|u\|^{2n} + (n-1-q) b t^{n-2-q}\|u\|^n - (1+p-q)t^{-1-q}\int_{\Om}g(tu)u\notag\\
&\quad\quad\quad - \ba t^{-q-1+\ba}\int_{\Om}|u|^{\ba}g(tu)u.\label{zeq04}
\end{align}
Therefore $\psi_{u}^{\prime}(t)<0$ for all $t>0$.
As $u\in H^{-}_{0}$ so there exists $t_*(u)$ such that $\psi_{u}(t_*)={\la}
\int_{\Om}h(x)|u|^{q+1}.$ Thus for $0<t<t_*$, $\phi_{u,M}^{^{\prime}}(t)= t^q(\psi_{u}(t)-{\la}
\int_{\Om}h(x)|u|^{q+1})>0$ and for $t>t_*$, $\phi_{u,M}^{^{\prime}}(t)<0.$ Hence $\phi_{u,M}$ is
increasing on $(0,t_*)$, decreasing on $(t_*, \infty)$. Since
$\phi_{u,M}(t)>0$ for $t$ close to $0$ and $\phi_{u,M}(t)\ra -\infty$ as
$t\ra \infty$, we get $\phi_{u,M}$ has exactly one critical point
$t_{1}(u)$, which is a global maximum point. Hence $t_{1}(u)u \in
\mc N^{-}_{\la,M}$.

\noi Case 2: $u\in H^+$.

\noi In this case, we claim that there exists $\la_0>0$ and a unique
$t_*$ such that for $\la\in(0,\la_0)$, $\phi_u$ has exactly two
critical points $t_1(u)$ and $t_2(u)$ such that
$t_1(u)<t_*(u)<t_2(u)$, and moreover $t_1(u)$ is a local minimum
point and $t_2(u)$ is a local maximum point. Thus $t_1(u)u\in\mc
N_{\la,M}^{+}$ and $t_2(u)u\in\mc N_{\la,M}^{-}$.

\noi To show this we need following Lemmas:
\begin{Lemma}\label{zle3}
Let $\ds \La: = \left\{u\in W^{1,n}_{0}(\Om) \;|\;
\|u\|^{\frac{3n}{2}} \leq \frac{\int_{\Om}{g^{\prime}(u){u}^2}dx}{2\sqrt{ab(2n-1-q)(n-1-q)}}\right\}$.
Then there exists $\la_0>0$ such that
for every $\la\in(0,\la_0)$,
\begin{equation}\label{zeq8}
\La_m:=\inf_{u\in \La\setminus\{0\} \cap H^{+}_{0}}\left\{\int_{\Om}
\left(p+2-2n +\ba{|u|^{\ba}}\right)|u|^{p+2} e^{|u|^{\ba}}- (2n-1-q)
{\la} \int_{\Om}h(x)|u|^{q+1} \right\}>0.
\end{equation}
\end{Lemma}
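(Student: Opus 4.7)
The plan is to show that on $\La\setminus\{0\}\cap H^+_0$ the integrand in the infimum, call it $E(u)$, is uniformly bounded below by a positive constant once $\la$ is small. Three ingredients drive the argument: (i) the constraint defining $\La$ forces $\|u\|$ away from $0$; (ii) the first term of $E(u)$ is bounded below by a positive multiple of $\|u\|^{3n/2}$ via the same constraint; (iii) H\"older plus Sobolev control the concave term by $\la\|u\|^{q+1}$, which is of lower order in $\|u\|$ than the positive piece.

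For (i), I expand $g'(u)u^2=[(p+1)+\ba|u|^\ba]|u|^{p+2}e^{|u|^\ba}$ and apply H\"older with conjugate exponents $(r,r')$, $r'$ large; the Trudinger--Moser inequality \eqref{eqc002} bounds $\int_\Om e^{r'|u|^\ba}$ provided $\|u\|$ is small (in the critical case $\ba=n/(n-1)$ this requires $r'\|u\|^{n/(n-1)}<\al_n$), and the Sobolev embedding $W^{1,n}_0\hookrightarrow L^s$ for all $s\geq 1$ controls the polynomial factor. This yields $\int_\Om g'(u)u^2\leq C\|u\|^{p+2}$ for $\|u\|$ small. Plugging this into the defining inequality $\|u\|^{3n/2}\leq (2\sqrt{ab(2n-1-q)(n-1-q)})^{-1}\int_\Om g'(u)u^2$ of $\La$ gives $\|u\|^{p+2-3n/2}\geq c>0$; since $p+2>2n>3n/2$ this forces $\|u\|\geq c_1>0$ uniformly on $\La\setminus\{0\}$.

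For (ii) and (iii), writing $p+2-2n=(p+1)-(2n-1)$ produces the identity
\[
\int_\Om(p+2-2n+\ba|u|^\ba)|u|^{p+2}e^{|u|^\ba}=\int_\Om g'(u)u^2-(2n-1)\int_\Om |u|^{p+2}e^{|u|^\ba}.
\]
Because $g'(u)u^2\geq (p+1)|u|^{p+2}e^{|u|^\ba}$, one has $\int_\Om |u|^{p+2}e^{|u|^\ba}\leq \tfrac{1}{p+1}\int_\Om g'(u)u^2$, so combined with the defining inequality of $\La$ the above quantity is at least $C_1\|u\|^{3n/2}$, where $C_1:=\tfrac{2(p+2-2n)\sqrt{ab(2n-1-q)(n-1-q)}}{p+1}$. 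For the concave term, H\"older with $\ga=n/(n-q-1)$ and the Sobolev embedding $W^{1,n}_0\hookrightarrow L^n$ give $\int_\Om h|u|^{q+1}\leq \|h\|_\ga\|u\|_n^{q+1}\leq C_2\|u\|^{q+1}$. Collecting,
\[
E(u)\geq \|u\|^{q+1}\bigl[C_1\|u\|^{3n/2-q-1}-(2n-1-q)\la C_2\bigr].
\]
As $q<n-1$ we have $3n/2-q-1>0$, and since $\|u\|\geq c_1$ on $\La$, choosing $\la_0:=C_1 c_1^{3n/2-q-1}/[2(2n-1-q)C_2]$ makes the bracket at least $\tfrac12 C_1 c_1^{3n/2-q-1}$ for every $\la\in(0,\la_0)$, and therefore $\La_m\geq \tfrac12 C_1 c_1^{3n/2}>0$.

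The main obstacle I anticipate is step (i): the bound $\int_\Om g'(u)u^2\leq C\|u\|^{p+2}$ for small $\|u\|$ must be derived with care in the critical exponential regime $\ba=n/(n-1)$, since the H\"older conjugate exponent $r'$ has to be chosen compatibly with the norm threshold imposed by Trudinger--Moser. Once this uniform positive lower bound on $\|u\|$ over $\La$ is in hand, Steps (ii)--(iii) reduce to the algebraic manipulations above, and the threshold $\la_0$ produced in (iii) must be checked to be compatible with the admissible range from (i).
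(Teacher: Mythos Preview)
Your argument is correct and follows the same three-step architecture as the paper: (i) a uniform lower bound on $\|u\|$ over $\La\setminus\{0\}$ via the defining constraint and Trudinger--Moser (this is identical to the paper's Step~1); (ii) a positive lower bound on the exponential term (your explicit inequality $\int_\Om(p+2-2n+\ba|u|^\ba)|u|^{p+2}e^{|u|^\ba}\ge\frac{p+2-2n}{p+1}\int_\Om g'(u)u^2\ge C_1\|u\|^{3n/2}$ is a cleaner version of the paper's Step~2, which only asserts the infimum is positive without writing the constant); (iii) control of the concave term and choice of $\la_0$.

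The only genuine divergence is in (iii). The paper applies H\"older with exponent $k=\frac{p+2+\ba}{q+1}$ and the pointwise bound $|u|^{p+2+\ba}\le(p+2-2n+\ba|u|^\ba)|u|^{p+2}e^{|u|^\ba}$ to dominate $\int_\Om h|u|^{q+1}$ directly by the positive term, obtaining the explicit threshold $\la_0=\frac{1}{2n-1-q}(C_1/l)^{(k-1)/k}$ with $l=\|h\|_{k'}^{k'}$. You instead bound $\int_\Om h|u|^{q+1}\le C_2\|u\|^{q+1}$ by Sobolev and then balance the powers $\|u\|^{3n/2}$ versus $\|u\|^{q+1}$ using the lower bound $\|u\|\ge c_1$ from (i). Both routes are valid; yours is slightly more elementary (it avoids the pointwise comparison with the exponential), while the paper's yields a $\la_0$ expressed purely through $C_1$ and $\|h\|_{k'}$, independent of the implicit constant $c_1$ coming from the contradiction argument in Step~1.
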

\proof
 Step 1: $\ds \inf_{u\in \La\setminus\{0\}\cap H^{+}_{0}} \|u\|
>0$. Suppose this is not true. Then we find a sequence $\{u_k\}
\subset \La\setminus\{0\}\cap H^{+}_{0}$ such that $\|u_k\| \ra 0$
and we have
\begin{align}\label{zeq9}
 \|u_k\|^{\frac{3n}{2}}\leq  \left(\frac{1}{2\sqrt{ab(2n-1-q)(n-1-q)}}\right)
\int_{\Om}{g^{\prime}(u_k)u_{k}^2}\;dx\;\;\; \fa \;\;k.
\end{align}
From $g(u)=u|u|^pe^{|u|^{\ba}}$, H\"{o}lder's inequality and Sobolev
inequality, we have
\begin{align*}
\int_{\Om} {g^{\prime}(u_k)u_{k}^2} dx &=
\int_{\Om} {\left(p+1+\ba|u_k|^{\ba}\right)|{u_k}|^{p+2}e^{|u_k|^{\ba}}} dx\\
&\leq C\int_{\Om}|{u_k}|^{p+2} e^{(1+\de)|u_k|^{\ba}} dx\\
 &\leq C\left(\int_{\Om}{|{u_k}|^{(p+2)t^{'}}} dx
\right)^{\frac{1}{t^{'}}}\left( \int_{\Om} e^{t(1+\de)|u_k|^{\ba}}  dx\right)^{\frac{1}{t}}\\
&\leq C' \|u_k\|^{p+2} \left(\sup_{\|w_k\|\leq 1}
\int_{\Om}e^{t(1+\de)\|u_k\|^{\ba}|w_k|^{\ba}}
dx\right)^{\frac{1}{t}},
\end{align*}
since  $\|u_k\|\ra 0$ as $k\ra\infty$, we can choose $\ds\al=
t(1+\delta){\|u_k\|}^{\ba}$ such that $\alpha \leq {\alpha_n}$.
Hence by this, \eqref{zeq9}, we obtain $1\leq
K'\|u_k\|^{p+2-\frac{3n}{2}}\ra 0$ as $k\ra \infty$, since $p+2 > \frac{3n}{2}$,
which gives a contradiction.\\
\noi Step 2: Let $\ds C_1=\inf_{u\in \La\setminus\{0\} \cap
H^{+}_{0}}\ds\int_{\Om} {\left(p+2-2n+\ba{|u|^{\ba}}\right)|u|^{p+2}
e^{|u|^{\ba}}} dx$. Then $C_{1}>0$.

\noi From Step 1 and the definition of $\La$, we obtain
\begin{align*}
0&< \inf_{u\in \La\setminus\{0\} \cap H^{+}_{0}}\int_{\Om}
{g^{\prime}(u)u^2}dx =\inf_{u\in \La\setminus\{0\}\cap
H^{+}_{0}}\int_{\Om}{\left(p+1+\ba{|u|^{\ba}}\right)|u|^{p+2}
e^{|u|^{\ba}}} dx.
\end{align*}
Using this it is easy to check that
\begin{align*}
 \inf_{u\in \La \setminus\{0\}\cap H^{+}_{0}}\int_{\Om}
{\left(p+2-2n+\ba{|u|^{\ba}}\right)|u|^{p+2} e^{|u|^{\ba}}} dx>0.
\end{align*}
This completes step 2.

\noi Step 3: Let
$\la<\frac{1}{(2n-q-1)}(\frac{C_1}{l})^{\frac{(k-1)}{k}}$, where
$l=\int_{\Om}|h(x)|^{\frac{k}{k-1}}dx$. Then (\ref{zeq8}) holds.

\noi Using H\"{o}lder's inequality and $(A2)$ we have,
\begin{align*}
\int_{\Om}h(x)|u|^{q+1}
&\leq  \left(\int_{\Om}|h(x)|^{\frac{k}{k-1}}dx\right)^{\frac{k-1}{k}} \left(\int_{\Om} |u|^{(q+1)k} dx \right)^{\frac{1}{k}}\\
&= l^{\frac{k-1}{k}} \left(\int_{\Om} |u|^{p+2+\ba} dx \right)^{\frac{1}{k}}\\
&\leq l^{\frac{k-1}{k}}
\left(\int_{\Om}\left(p+2-2n+\ba{|u|^{\ba}}\right)
|u|^{p+2}e^{|u|^{\ba}} dx \right)^{\frac{1}{k}}\\
&\leq\left(\frac{l}{C_1}\right)^{\frac{k-1}{k}}
\int_{\Om}\left(p+2-2n+\ba{|u|^{\ba}}\right) |u|^{p+2}e^{|u|^{\ba}}
dx .
\end{align*}
The above inequality combined with step 2 proves the Lemma.\QED

The following Lemma completes the proof of claim made in case 2 above:

\begin{Lemma}\label{zle5}
Let $\la$ be such that
(\ref{zeq8}) holds. Then for every $u\in H^{+}\setminus\{0\}$, there
is a unique $t_*=t_{*}(u)>0$ and unique $t_{1}=
t_{1}(u)<t_{*}<t_{2}= t_{2}(u)$ such that $t_{1}u\in \mc
N_{\la,M}^{+}$, $t_{2}u\in\mc N_{\la,M}^{-}$  and $\ds J_{\la,M}(t_{1}u)=
\min_{0\leq t\leq t_{2}}J_{\la,M}(tu)$, $\ds J_{\la,M}(t_{2}u)=
\max_{t\geq t_{*}}J_{\la,M}(tu)$.
\end{Lemma}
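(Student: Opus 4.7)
The plan is to reduce the critical point equation for the fibering map $\phi_{u,M}$ to a single-variable problem, use a monotonicity trick to locate a unique global maximum $t_*(u)$ of the auxiliary function $\psi_u$, verify that this maximum exceeds $\la\int_\Om h|u|^{q+1}\,dx$ by invoking Lemma \ref{zle3} at a cleverly rescaled test function, and finally read off the two roots $t_1<t_*<t_2$ together with their Nehari signs.

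\textbf{Step 1 (reformulation).} Since $m(s)=as+b$, a direct computation gives $t^{-q}\phi_{u,M}'(t)=\psi_u(t)-\la\int_\Om h|u|^{q+1}\,dx$, where
\[
\psi_u(t)=a t^{2n-1-q}\|u\|^{2n}+b t^{n-1-q}\|u\|^n-t^{p+1-q}\int_\Om |u|^{p+2}\,e^{t^\ba |u|^\ba}\,dx.
\]
Finding critical points of $\phi_{u,M}$ thus reduces to solving $\psi_u(t)=\la H(u)$, and since $q<n-1$ one has $\psi_u(0^+)=0$ while $\psi_u(t)\to-\infty$ as $t\to\infty$ (the exponential dominates every polynomial).

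\textbf{Step 2 (unique maximum of $\psi_u$).} The key observation is that $\psi_u'(t)/t^{p-q}$ is strictly decreasing on $(0,\infty)$. Dividing \eqref{zeq04} by $t^{p-q}$ produces two positive terms with \emph{negative} exponents $2n-2-p$ and $n-2-p$ (using $p+1>2n$), while the subtracted piece $\int_\Om [(p+1-q)+\ba t^\ba|u|^\ba]|u|^{p+2}e^{t^\ba|u|^\ba}\,dx$ is strictly increasing in $t$. Hence $\psi_u'(t)/t^{p-q}$ decreases monotonically from $+\infty$ to $-\infty$ and has a unique zero $t_*(u)$; consequently $\psi_u$ is strictly increasing on $(0,t_*(u))$ and strictly decreasing on $(t_*(u),\infty)$.

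\textbf{Step 3 (size of the maximum).} Using the relation $\psi_u'(t_*)=0$ to eliminate the $\|u\|^{2n}$ term in $\psi_u(t_*)$ yields
\[
\psi_u(t_*)=\frac{t_*^{p+1-q}}{2n-1-q}\int_\Om (p+2-2n+\ba t_*^\ba|u|^\ba)|u|^{p+2}e^{t_*^\ba|u|^\ba}\,dx+\frac{bn\,t_*^{n-1-q}}{2n-1-q}\|u\|^n.
\]
Setting $v=t_*(u)u$, the identity $\psi_u'(t_*)=0$ rewritten in $v$-variables combined with AM--GM gives $2\sqrt{ab(2n-1-q)(n-1-q)}\,\|v\|^{3n/2}\leq \int_\Om g'(v)v^2\,dx$, so $v\in\La$, and $v\in H^+$ because $u\in H^+$. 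Applying \eqref{zeq8} at $v$ and rescaling back to $u$ gives
\[
t_*^{p+1-q}\int_\Om(p+2-2n+\ba t_*^\ba|u|^\ba)|u|^{p+2}e^{t_*^\ba|u|^\ba}\,dx>(2n-1-q)\la\int_\Om h|u|^{q+1}\,dx,
\]
whence $\psi_u(t_*)>\la H(u)$.

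\textbf{Step 4 (conclusion).} By Steps 2 and 3, the horizontal line $\la H(u)>0$ meets the graph of $\psi_u$ at precisely two points $0<t_1<t_*<t_2$, producing $t_1u,t_2u\in\mc N_{\la,M}$. Differentiating the identity $\phi_{u,M}'(t)=t^q(\psi_u(t)-\la H(u))$ and using $\psi_u(t_i)=\la H(u)$ gives $\phi_{u,M}''(t_i)=t_i^q\psi_u'(t_i)$; Step 2 then forces $\phi_{u,M}''(t_1)>0$ and $\phi_{u,M}''(t_2)<0$, placing $t_1u\in\mc N_{\la,M}^+$ and $t_2u\in\mc N_{\la,M}^-$. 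The sign chart of $\phi_{u,M}'$ (negative on $(0,t_1)$, positive on $(t_1,t_2)$, negative on $(t_2,\infty)$) makes $\phi_{u,M}$ decreasing--increasing--decreasing on the three intervals, yielding the claimed extremum characterizations of $J_{\la,M}$ at $t_1u$ and $t_2u$. I expect the main obstacle to be Step 3: the smallness hypothesis on $\la$ is encoded on the special set $\La\cap H_0^+$, and bridging it to a generic $u\in H^+$ requires the rescaling $v=t_*(u)u$ together with the AM--GM bound that automatically places $v$ into $\La$.
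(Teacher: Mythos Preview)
Your proof is correct and follows essentially the same route as the paper's own argument: reduce to $\psi_u(t)=\la H(u)$, establish a unique maximum $t_*$ of $\psi_u$, place $t_*u$ in $\La\cap H^+$ so that Lemma \ref{zle3} forces $\psi_u(t_*)>\la H(u)$, and read off $t_1,t_2$ with the required Nehari signs. Your Step 2 monotonicity argument via $\psi_u'(t)/t^{p-q}$ is in fact a cleaner justification of the unique-maximum claim than the paper's terse appeal to \eqref{zeq03}, and your formula for $\psi_u(t_*)$ correctly retains the positive term $\frac{bn\,t_*^{\,n-1-q}}{2n-1-q}\|u\|^n$ which the paper silently drops (harmlessly, since it only helps the inequality).
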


\begin{proof}
 Fix $0\ne u\in H^{+}$. Then from \eqref{z1}, we note that $\psi_{u}(t)\ra -\infty$ as $t \ra\infty $,
from \eqref{zeq03} it is easy to see that $\ds \lim_{t\ra 0^{+}}\psi_{u}^{\prime}(t)>0$
and sum of second and third term in \eqref{zeq03} is a monotone
function in $t$. So there exists a unique $t_*=t_{*}(u)>0$ such that
$\psi_u(t)$ is increasing on $(0,t_*)$, decreasing on $(t_*, \infty)$
and $\psi_{u}^{\prime}(t_*)=0$. Using this and \eqref{zeq03}, we get
$t_*{u}\in \La\setminus\{0\} \cap H^{+}$. From
$t_{*}^{q+2}\psi_{u}^{\prime}(t_*)= 0$ and by definition of $\psi_u$, we
get

\begin{align*}
\psi_{u}(t_*)=\frac{1}{t_{*}^{q+1}(2n-1-q)}\left[ \int_{\Om}{g^{\prime}(t_{*}u)
(t_*u)^2}  dx -(2n-1) \int_{\Om}{g(t_*u) t_*u}  dx \right].
\end{align*}
Using Lemma \ref{zle3} and noting that
$g^\prime(s)s^2-(2n-1)g(s)s=(p+2-2n+\ba|s|^{\ba})|s|^{p+2}
e^{|s|^{\ba}}$, we have
\begin{align*}
\psi_{u}(t_*)- {\la} \int_{\Om}h(x)|u|^{q+1} &=\frac{1}{ t_{*}^{q+1} (2n-1-q)}\left[
\int_{\Om} \left( g^{\prime}(t_{*}u)
(t_*u)^2 -(2n-1)g(t_*u) t_*u  \right)dx  \right.\\
&\hspace{2cm} \left.-(2n-1-q) {\la} \int_{\Om}h |t_*u|^{q+1} \right]\\
&> \frac{\La_m}{t_{*}^{q+1} (2n-1-q)} >0.
\end{align*}

\noi Since $\psi_{u}(0)=0$, $\psi_{u}$ is increasing in $(0,t_*)$
and strictly decreasing in $(t_*,\infty)$, $\ds \lim_{t\ra
\infty}\psi_{u}(t)=-\infty$ and $u\in H^+$. Then there exists a
unique $t_{1}=t_{1}(u)<t_*$ and $t_{2}=t_{2}(u)>t_*$  such that
$\psi_{u}(t_{1})= \la \int_{\Om}h(x)|u|^{q+1} = \psi_{u}(t_{2})$
implies $t_{1}{u}$, $t_{2}{u}\in \mc N_{\la,M}$. Also
$\psi_{u}^{\prime}(t_{1})>0$ and $\psi_{u}^{\prime}(t_{2})<0$ give
$t_{1}u\in \mc N_{\la,M}^{+}$ and $t_{2}u\in \mc N_{\la,M}^{-}$.
Since $\phi_{u,M}^{\prime}(t)=t^q(\psi_{u}(t)- \la
\int_{\Om}h(x)|u|^{q+1})$. Then $\phi_{u,M}^{\prime}(t)<0$ for all
$t\in [0,t_1)$ and $\phi_{u,M}^{^{\prime}}(t)>0$ for all $t\in
(t_1,t_2)$ so $\ds\phi_{u,M}(t_1)=\min_{0\leq t\leq t_2}
\phi_{u,M}(t)$. Also $\phi_{u,M}^{\prime}(t)>0$ for all $t\in
[t_*,t_2)$, $\phi_{u,M}^{^{\prime}}(t_2)=0$ and
$\phi_{u,M}^{\prime}(t)<0$ for all $t\in (t_2,\infty)$ implies that
$\ds \phi_{u,M}(t_2)= \max_{t\geq t_*} \phi_{u,M}(t)$.\QED
\end{proof}

\begin{Lemma}\label{zle4}
If $\la$ be such that (\ref{zeq8}) holds. Then $\mc N_{\la,M}^{0}= \{0\}$.
\end{Lemma}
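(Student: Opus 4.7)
The plan is to argue by contradiction: assume that some $0 \neq u \in \mathcal{N}_{\la,M}^{0}$ exists and derive an incompatibility with Lemma \ref{zle3}. From the two identities $\phi_{u,M}'(1)=0$ and $\phi_{u,M}''(1)=0$, and using the explicit form $m(s)=as+b$, I get
\begin{align*}
(A)&\quad a\|u\|^{2n}+b\|u\|^{n}-\la\!\int_{\Om} h|u|^{q+1}\,dx-\int_{\Om} g(u)u\,dx=0,\\
(B)&\quad (2n-1)a\|u\|^{2n}+(n-1)b\|u\|^{n}-q\la\!\int_{\Om} h|u|^{q+1}\,dx-\int_{\Om} g'(u)u^{2}\,dx=0.
\end{align*}
The strategy is to take two different linear combinations of $(A)$ and $(B)$ that isolate exactly the two pieces of information I need.

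First I would compute $(B)-q(A)$, which cancels the $\la\int h|u|^{q+1}$ term, producing
$$(2n-1-q)a\|u\|^{2n}+(n-1-q)b\|u\|^{n}=\int_{\Om}\bigl(p+1-q+\ba|u|^{\ba}\bigr)|u|^{p+2}e^{|u|^{\ba}}\,dx.$$
Since $0<q<n-1<2n-1$, both coefficients on the left are positive, so AM--GM yields $2\sqrt{ab(2n-1-q)(n-1-q)}\,\|u\|^{3n/2}\le \int (p+1-q+\ba|u|^{\ba})|u|^{p+2}e^{|u|^{\ba}}\le \int g'(u)u^{2}\,dx$, where the last bound uses $p+1-q<p+1$. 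This is exactly the condition $u\in\La$ in Lemma \ref{zle3}.

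Next I would compute $(B)-(2n-1)(A)$, which cancels the $a\|u\|^{2n}$ term and, using $(2n-1)g(s)s-g'(s)s^{2}=-(p+2-2n+\ba|s|^{\ba})|s|^{p+2}e^{|s|^{\ba}}$, delivers
$$\int_{\Om}\bigl(p+2-2n+\ba|u|^{\ba}\bigr)|u|^{p+2}e^{|u|^{\ba}}\,dx-(2n-1-q)\la\!\int_{\Om} h|u|^{q+1}\,dx=-nb\|u\|^{n}.$$
Since $u\neq 0$, the right-hand side is strictly negative. If $u\in H^{-}$, the left-hand side is visibly strictly positive (note $p+2-2n>0$ by the assumption $2n-1<p+1$ and $-\la\int h|u|^{q+1}>0$), which is already a contradiction; hence $u\in H^{+}_{0}$. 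But then, combining $u\in\La\setminus\{0\}\cap H^{+}_{0}$ with Lemma \ref{zle3} forces the left-hand side to be at least $\La_{m}>0$, again contradicting the negative right-hand side. The main obstacle is engineering the two linear combinations so that one produces precisely the AM--GM-friendly quadratic form in $\|u\|^{n}$ and $\|u\|^{2n}$ required to land in $\La$, while the other produces exactly the functional bounded below in Lemma \ref{zle3}; once these are lined up, the rest is sign chasing.
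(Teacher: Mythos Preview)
Your argument is correct and follows the paper's approach: both proofs combine the two identities $\phi_{u,M}'(1)=0$ and $\phi_{u,M}''(1)=0$, use AM--GM on $(2n-1-q)a\|u\|^{2n}+(n-1-q)b\|u\|^{n}$ to place $u$ in $\Lambda$, and then contradict Lemma~\ref{zle3}. The one difference is in the case $u\in H_{0}^{-}$: the paper disposes of it by invoking the earlier fibering-map analysis (which forces $\phi_{u,M}''(1)<0$ there), whereas you handle it directly from the identity $(B)-(2n-1)(A)$, which is slightly cleaner and more self-contained.
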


\proof Suppose $u\in \mc N_{\la,M}^{0}$, $u\not\equiv 0$. Then by
definition of $\mc N_{\la,M}^{0}$, we have the following two equations
\begin{align}
(2n-1)a\|u\|^{2n}+ (n-1)b\|u\|^n&=\int_{\Om}{g^{\prime}(u)u^2} dx+ \la q \int_{\Om}h(x)|u|^{q+1},\label{zeq10}\\
a\|u\|^{2n}+ b\|u\|^n &=\int_{\Om}g(u)u dx+ \la \int_{\Om}h(x)|u|^{q+1}.\label{zeq11}
\end{align}
Let $u\in H^+\cap \mc N_{\la,M}^{0}$ and $\la\in(0,\la_0)$. Then from
above equations, we can easily deduce that
\begin{align*}
(2n-1-q)a\|u\|^{2n}+ (n-1-q)b\|u\|^n&\leq \int_{\Om}{g^{\prime}(u)u^2} dx.
\end{align*}
Then using the inequality $\sqrt{ab} \le \frac{a+b}{2}$ for $a,b\ge 0$, we obtain
\[ 2\sqrt{(2n-1-q)ab}\|u\|^{\frac{3n}{2}}\leq (2n-1-q)a\|u\|^{2n}+ (n-1-q)b\|u\|^n.\]
Hence $u\in\La\setminus\{0\}$. Noting that
$g^\prime(s)s^2-(2n-1)g(s)s= (p+2-2n+\ba|s|^{\ba})|s|^{p+2}
e^{|s|^{\ba}}$, from \eqref{zeq10} and \eqref{zeq11}, we get
\begin{align*}
(2n-1-q)\la \int_{\Om}h(x)|u|^{q+1} &=
\int_{\Om}\left(p+2-2n+\ba|u|^{\ba}\right)|u|^{p+2}e^{|u|^{\ba}} dx + n b\|u\|^n\\
&> \int_{\Om}\left(p+2-2n+\ba|u|^{\ba}\right)|u|^{p+2}e^{|u|^{\ba}} dx,
\end{align*}
which violates Lemma \ref{zle3}. Hence $\mc N_{\la,M}^{0}= \{0\}$. In
other cases, $u\in H_{0}^-\cap\mc N_{\la, M}^{0}$, we see that $t=1$ is a critical point of $\phi_{u,M}(t)$ and
$\phi_{u,M}^{^{\prime\prime}}(1)=0$. But $u\in H^{-}_{0}$ implies that $\phi_{u,M}$ has exactly one critical point corresponding
to global maxima i.e $\phi_{u,M}^{\prime\prime}(1)\ne0$ which is a
contradiction. Hence $\mc N_{\la,M}^{0}= \{0\}$. \QED
\subsection{Existence and multiplicity of solutions}

\noi In this section we show that $J_{\la, M}$ is bounded below on $\mc
N_{\la,M}$. Also we show that $J_{\la,M}$
attains its minimizer on $H^{+}\cap \mc N_{\la,M}^{+}$.\\
\noi We define $\ds\theta_{\la,M} := \inf \left\{J_{\la,M}(u)\mid u\in
\mc N_{\la,M}\right\}$ and prove the following lower bound:
\begin{Theorem}\label{zth1}
$J_{\la,M}$ is bounded below and coercive on $\mc N_{\la,M}$. Moreover, there exists
a constant $C=C(p,q,n)>0$ such that $\theta_{\la,M} \geq - C
\la^{\frac{k}{k-1}}$.
\end{Theorem}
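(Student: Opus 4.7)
The plan is to combine the Nehari identity with an elementary pointwise comparison between $G$ and $g\cdot u$, and then apply H\"older and Young inequalities both with the same conjugate pair $(k,k')$ so that the parameter $\la$ emerges with exactly the stated power.

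First, since $m(s)=as+b$ one has $\tfrac{1}{n}M(\|u\|^n)=\tfrac{a}{2n}\|u\|^{2n}+\tfrac{b}{n}\|u\|^n$. The inequality $G(s)\leq \tfrac{g(s)s}{p+2}$, which is immediate from the monotonicity of $t\mapsto e^{t^\ba}$ on $[0,|s|]$ together with $g(s)s=|s|^{p+2}e^{|s|^\ba}$, combined with the Nehari identity
\[ \int_\Om g(u)u\,dx = a\|u\|^{2n}+b\|u\|^n-\la\int_\Om h(x)|u|^{q+1}\,dx,\]
substituted into the definition of $J_{\la,M}$, yields after collecting terms
\[
J_{\la,M}(u)\geq A_1\|u\|^{2n}+A_2\|u\|^n-\la A_3\int_\Om h(x)|u|^{q+1}\,dx,
\]
with $A_1=a(\tfrac{1}{2n}-\tfrac{1}{p+2})$, $A_2=b(\tfrac{1}{n}-\tfrac{1}{p+2})$ and $A_3=\tfrac{1}{q+1}-\tfrac{1}{p+2}$, all strictly positive because $p+2>2n>n>q+1$.

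If $\int_\Om h(x)|u|^{q+1}\,dx\leq 0$ there is nothing to prove, so assume the integral is positive. H\"older's inequality with the pair $(k,k')$ together with the identity $(q+1)k=p+2+\ba$ gives
\[
\int_\Om h(x)|u|^{q+1}\,dx\leq l^{1/k'}\left(\int_\Om|u|^{p+2+\ba}\,dx\right)^{1/k}.
\]
The pointwise bound $|u|^\ba\leq e^{|u|^\ba}$ yields $|u|^{p+2+\ba}\leq g(u)u$, so using the Nehari identity once more and the positivity of $\int h|u|^{q+1}$ one gets $\int_\Om|u|^{p+2+\ba}\,dx\leq a\|u\|^{2n}+b\|u\|^n$. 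Then Young's inequality, applied with exponents $(k,k')$ and a free parameter $\e>0$ to the resulting factor $\la l^{1/k'}(a\|u\|^{2n}+b\|u\|^n)^{1/k}$, splits off the $\la$-dependence as $C(\e)\,l\,\la^{k/(k-1)}$ while the remainder is only a small multiple of $a\|u\|^{2n}+b\|u\|^n$.

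Choosing $\e$ small enough that the resulting coefficients of $\|u\|^{2n}$ and $\|u\|^n$ remain strictly positive, one obtains $J_{\la,M}(u)\geq \al_1\|u\|^{2n}+\al_2\|u\|^n-C\la^{k/(k-1)}$ with $\al_1,\al_2>0$, which gives both the coercivity (because $2n>n>q+1$ so the positive powers dominate any lower order) and the claimed lower bound $\theta_{\la,M}\geq -C\la^{k/(k-1)}$. The only delicate point is the apparent self-referential loop: on the Nehari manifold $\int g(u)u$ itself involves $\int h|u|^{q+1}$; the pointwise trick $|u|^\ba\leq e^{|u|^\ba}$ is precisely what breaks this loop in a single step, and the choice of the matched exponents $(k,k')$ in both H\"older and Young is exactly what makes $\la$ appear with the precise power $k/(k-1)$ and no other.
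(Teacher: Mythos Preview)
Your argument is correct, and the coercivity portion matches the paper's proof almost verbatim. For the quantitative lower bound $\theta_{\la,M}\geq -C\la^{k/(k-1)}$, however, you take a different route from the paper. The paper rewrites $J_{\la,M}(u)$ on the Nehari manifold as
\[
J_{\la,M}(u)=\frac{1}{2n}\int_\Om g(u)u-\int_\Om G(u)-\la\Bigl(\frac{1}{q+1}-\frac{1}{2n}\Bigr)\int_\Om h|u|^{q+1}+\frac{b}{2n}\|u\|^n,
\]
uses the pointwise inequality $\frac{1}{2n}g(u)u-G(u)\geq(\frac{1}{2n}-\frac{1}{p+2})|u|^{p+2+\ba}$, drops the positive $\|u\|^n$ term, and then minimizes the resulting one–variable function $\rho(x)=Ax^k-B\la x$ explicitly in $x=(\int|u|^{p+2+\ba})^{1/k}$, which yields an explicit constant $C(p,q,n)$. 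You instead stay with the expression in terms of $\|u\|^{2n}$ and $\|u\|^n$, feed $\int|u|^{p+2+\ba}\leq\int g(u)u\leq a\|u\|^{2n}+b\|u\|^n$ (via the Nehari identity and the positivity of $\int h|u|^{q+1}$) back into the H\"older estimate, and then split with Young's inequality and a free parameter $\e$. Your route is perhaps more robust and avoids the explicit optimization, but at the cost of a non-explicit constant depending on the auxiliary $\e$; the paper's explicit minimization delivers a sharp constant and is slightly cleaner since it never leaves the single quantity $\int|u|^{p+2+\ba}$.
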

\proof Let $u \in \mc N_{\la,M}$. Then we have
\begin{align}\label{zeq4}
 J_{\la,M}(u)
&=\frac{(p+2-2n)}{2n(p+2)}a\|u\|^{2n}+
\frac{(p+2-n)}{n(p+2)}b\|u\|^{n} +
\int_{\Om}\left(\frac{1}{p+2}g(u)u
- G(u)\right)\notag\\
&\quad\quad-\frac{\la(p+1-q)}{(q+1)(p+2)}\int_{\Om}h|u|^{q+1}.
\end{align}
Using $G(s)\leq \frac{1}{p+2}g(s)s$ for all $s\in \mb R$,
H\"{o}lder's  and Sobolev inequalities in \eqref{zeq4}, we obtain
\begin{align*}
 J_{\la,M}(u)
&\geq \frac{(p+2-2n)}{2n(p+2)}a\|u\|^{2n}+\frac{(p+2-n)}{n(p+2)}b
\|u\|^{n}
-\frac{\la(p+1-q)}{(q+1)(p+2)}\int_{\Om}h(x)|u|^{q+1}dx\\
&\geq \frac{(p+2-2n)}{2n(p+2)}a\|u\|^{2n}+ \frac{(p+2-n)}{n(p+2)} b
\|u\|^{n} -\frac{\la(p+1-q)}{(q+1)(p+2)} C_0 \|u\|^{q+1},
\end{align*}
for some constant $C_0>0$, which shows $J_{\la,M}$ is coercive on
$\mc N_{\la,M}$ as $q+1< 2n$.

\noi Again for $u \in \mc N_{\la,M}$, we have
\begin{align}\label{zeq3}
J_{\la,M}(u) &= \frac{1}{2n} \int_{\Om}{g(u)u}- \int_{\Om} {G(u)}
-{\la}\left(\frac{1}{q+1}-\frac{1}{2n}\right) \int_{\Om}h(x)|u|^{q+1}+ \frac{b}{2n}\|u\|^n.
\end{align}
Also, It is easy to see that
\begin{equation}\label{1m}
\frac{1}{2n}g(u)u- G(u)\geq
\left(\frac{1}{2n}-\frac{1}{p+2}\right)|u|^{p+2+\ba} ,
\end{equation}
If $u\in H^{-}_{0}$, then $J_\la(u)$ is bounded below by $0$. If
$u\in H^+$ then by using H\"{o}lder's inequality, we have
\begin{equation*}
\int_{\Om}h(x)|u|^{q+1} \leq  l^{\frac{k-1}{k}} \left(\int_{\Om} |u|^{(q+1)k}
dx\right)^{\frac{1}{k}},
\end{equation*}
where $l=\int_{\Om} |h(x)|^{k/k-1} dx$.
From above inequalities, we get
\begin{align}
J_{\la,M}(u)&\geq \left(\frac{1}{2n}-\frac{1}{p+2}\right)\int_{\Om}
|u|^{(q+1)k} dx -
\frac{\la(2n-q-1)l^{\frac{k-1}{k}}}{2n(q+1)}\left(\int_{\Om}
|u|^{(q+1)k} dx\right)^{\frac{1}{k}},\nonumber
\end{align}
where $k=\frac{p+2+\ba}{q+1}$. By considering the global minimum of the function $\rho(x): \mb R^+
\lra \mb R$ defines as \\ $\rho(x)=
\left(\frac{1}{2n}-\frac{1}{p+2}\right)x^k -
\left(\frac{\la(2n-q-1)l^{\frac{k-1}{k}}}{2n(q+1)}\right) x,$ it can
be shown that
\begin{equation*}
\inf_{u\in \mc N_{\la,M}}J_{\la,M}(u) \geq
\rho\left[\left(\frac{\la(2n-q-1)(p+2)l^{\frac{k-1}{k}}}{k(q+1)(p+2-2n)}\right)^{\frac{1}{k-1}}\right].
\end{equation*}
From this it follows that
\begin{equation}\label{zeq5}
\theta_{\la,M} \geq -C(p,q,n)\la^{\frac{k}{k-1}},
\end{equation}
where $C(p,q,n)=
\left(\frac{1}{k^{\frac{1}{k-1}}}-\frac{1}{k^{\frac{k}{k-1}}}\right)
\frac{l(p+2)^{\frac{1}{k-1}}(2n-q-1)^{\frac{k}{k-1}}}{2n(p+2-2n)^{\frac{1}{k-1}}(q+1)^{\frac{k}{k-1}}}>0$.
Hence $J_{\la,M}$ is bounded below on $\mc N_{\la,M}.$ \QED

\noi The following lemma shows that minimizers for $J_{\la,M}$ on any
subset of $\mc N_{\la,M}$ are usually critical points for $J_{\la,M}$.
\begin{Lemma}\label{zle1}
Let $u$ be a local minimizer for $J_{\la,M}$ in any of the subsets of $\mc N_{\la,M}$ such that
$u\notin \mc N_{\la,M}^{0}$, then $u$ is a critical point
for $J_{\la,M}$.
\end{Lemma}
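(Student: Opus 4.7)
The plan is to use Lagrange multipliers on the constraint defining $\mc N_{\la,M}$ and then exploit the hypothesis $u \notin \mc N_{\la,M}^0$ to force the multiplier to vanish.

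More precisely, I would introduce the constraint functional
\[
\Phi(v) := \langle J_{\la,M}'(v), v\rangle = m(\|v\|^n)\|v\|^n - \la\int_\Om h(x)|v|^{q+1}dx - \int_\Om g(v)v\, dx,
\]
so that $\mc N_{\la,M} = \{v \neq 0 : \Phi(v) = 0\}$. Since $J_{\la,M}$ and $\Phi$ are $C^1$ and $u$ is a local minimizer of $J_{\la,M}$ restricted to $\mc N_{\la,M}$ (or the specified subset), the Lagrange multiplier theorem yields some $\mu \in \mathbb{R}$ with $J_{\la,M}'(u) = \mu\,\Phi'(u)$, provided $\Phi'(u) \neq 0$. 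The plan is to test this identity against $u$ itself.

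The key computation is to relate $\langle \Phi'(u), u\rangle$ to the fibering map. Observe that $\Phi(tu) = t\,\phi_{u,M}'(t)$ for all $t > 0$, so differentiating in $t$ and evaluating at $t = 1$ gives
\[
\langle \Phi'(u), u\rangle = \phi_{u,M}'(1) + \phi_{u,M}''(1) = \phi_{u,M}''(1),
\]
where in the last step I used $u \in \mc N_{\la,M}$, which means $\phi_{u,M}'(1) = 0$. By the hypothesis $u \notin \mc N_{\la,M}^0$, we have $\phi_{u,M}''(1) \neq 0$; in particular $\Phi'(u) \neq 0$, which legitimates the Lagrange multiplier step above.

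Pairing the identity $J_{\la,M}'(u) = \mu\,\Phi'(u)$ with $u$ and using that $u \in \mc N_{\la,M}$ gives
\[
0 = \langle J_{\la,M}'(u), u\rangle = \mu\,\langle \Phi'(u), u\rangle = \mu\,\phi_{u,M}''(1).
\]
Since $\phi_{u,M}''(1) \neq 0$, we conclude $\mu = 0$, hence $J_{\la,M}'(u) = 0$, i.e. $u$ is a critical point of $J_{\la,M}$ on $W^{1,n}_0(\Om)$. There is no serious obstacle here: the only thing to be careful about is verifying the regularity of $\Phi$ and the identification $\langle \Phi'(u), u\rangle = \phi_{u,M}''(1)$, both of which are straightforward from the definitions of $J_{\la,M}$, $m$, $g$, and the fiber map $\phi_{u,M}$ introduced earlier in this section.
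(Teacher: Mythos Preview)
Your proposal is correct and follows essentially the same Lagrange-multiplier argument as the paper: define the constraint functional $I_{\la,M}(v)=\langle J_{\la,M}'(v),v\rangle$, obtain $J_{\la,M}'(u)=\mu\, I_{\la,M}'(u)$, pair with $u$, and use $\langle I_{\la,M}'(u),u\rangle=\phi_{u,M}''(1)\neq 0$ to force $\mu=0$. Your write-up is in fact slightly more careful than the paper's, since you explicitly verify $\Phi'(u)\neq 0$ before invoking the multiplier rule and you justify the identity $\langle \Phi'(u),u\rangle=\phi_{u,M}''(1)$ via $\Phi(tu)=t\,\phi_{u,M}'(t)$.
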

\proof Let $u$ be a local minimizer for $J_{\la,M}$ in any of the
subsets of $\mc N_{\la,M}$. Then, in any case $u$ is a
minimizer for $J_{\la,M}$ under the constraint $I_{\la,M}(u):=\ld
J_{\la,M}^{\prime}(u),u\rd =0$. Hence, by the theory of Lagrange
multipliers, there exists $\mu \in \mb R$ such that $ J
_{\la,M}^{\prime}(u)= \mu I_{\la,M}^{\prime}(u)$. Thus $\ld
J_{\la,M}^{\prime}(u),u\rd= \mu\;\ld I_{\la,M}^{\prime}(u),u\rd = \mu
\phi_{u,M}^{\prime\prime}(1)$=0, but $u\notin \mc N_{\la,M}^{0}$ and so
$\phi_{u,M}^{\prime\prime}(1) \ne 0$. Hence $\mu=0$ completes the
proof.\QED

\begin{Lemma}\label{zle31}
Let $\la$ satisfy (\ref{zeq8}).
Then given $u\in \mc N_{\la,M}\setminus \{0\}$, there exist $\e>0$
and a differentiable function $\xi : \textbf{B}(0,\e)\subset
W^{1,n}_{0}(\Om) \lra \mb{R}$ such that $\xi(0)=1$, the function
$\xi(w)(u-w)\in\mc N _{\la,M}$ and for all $w\in W^{1,n}_{0}(\Om)$
{\small \begin{align}\label{zeq:3.1}
\ld&\xi^{\prime}(0),w\rd=  \notag\\
&\frac{ n(a\|u\|^{n}+a+b)\int_{\Om}(|\na u|^{n-2}\na u
\na w-\ds\int_{\Om}\left({g(u)+g^{\prime}(u)u}\right)w - \la
(q+1)\ds\int_{\Om} h(x)|u|^{q-1}uw }{(2n-1-q)a\|u\|^{2n}+(n-q-1)b\|u\|^n-
\ds\int_{\Om}{g^{\prime}(u) u^2}dx+ q\ds\int_{\Om} g(u) u dx}.
\end{align}}
 \end{Lemma}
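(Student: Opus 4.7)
The plan is to deduce the statement from the implicit function theorem applied to a suitable constraint function on $\mathbb{R}\times W^{1,n}_0(\Omega)$, in the standard Nehari-manifold style. Define $F:\mathbb{R}\times W^{1,n}_0(\Omega)\to\mathbb{R}$ by
\[
F(\xi,w)=\xi^{n} m(\xi^{n}\|u-w\|^{n})\|u-w\|^{n}-\la\xi^{q+1}\!\!\int_{\Om}h|u-w|^{q+1}-\!\!\int_{\Om}g(\xi(u-w))\,\xi(u-w).
\]
Observe that $F(\xi,w)=\xi\,\phi'_{u-w,M}(\xi)$, so $F(\xi,w)=0$ is exactly the condition $\xi(u-w)\in\mc N_{\la,M}$ (apart from the trivial factor $\xi$, which will stay bounded away from zero near $\xi=1$). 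Since $u\in\mc N_{\la,M}$, we have $F(1,0)=\phi'_{u,M}(1)=0$.

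Next I would verify the hypotheses of the implicit function theorem. The map $F$ is $C^{1}$ on a neighborhood of $(1,0)$: the polynomial-in-$\xi$ terms and the $n$-norm are smooth, and the differentiability of $w\mapsto\int_{\Om}g(\xi(u-w))\xi(u-w)$ (together with $w\mapsto\int h|u-w|^{q+1}$) follows from standard dominated convergence arguments using the Moser-Trudinger inequality \eqref{eqc002} to control $e^{|u-w|^{\ba}}$ for $w$ small in $W^{1,n}_0(\Om)$. Differentiating $F$ in $\xi$ at $(1,0)$ yields
\[
\partial_\xi F(1,0)=\phi'_{u,M}(1)+\phi''_{u,M}(1)=\phi''_{u,M}(1),
\]
and a direct computation, using $m(s)=as+b$ and the relation $a\|u\|^{2n}+b\|u\|^{n}=\la\!\int h|u|^{q+1}+\!\int g(u)u$ coming from $u\in\mc N_{\la,M}$, gives
\[
\phi''_{u,M}(1)=(2n-1-q)a\|u\|^{2n}+(n-1-q)b\|u\|^{n}-\!\int_\Om g'(u)u^{2}+q\!\int_\Om g(u)u.
\]
Since $\la$ satisfies \eqref{zeq8}, Lemma \ref{zle4} forces $\mc N_{\la,M}^{0}=\{0\}$; hence the assumption $u\neq 0$ gives $\phi''_{u,M}(1)\neq 0$. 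The implicit function theorem therefore supplies $\e>0$ and a $C^{1}$ map $\xi:B(0,\e)\subset W^{1,n}_{0}(\Om)\to\mathbb{R}$ with $\xi(0)=1$ and $F(\xi(w),w)=0$, i.e.\ $\xi(w)(u-w)\in\mc N_{\la,M}$.

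The formula \eqref{zeq:3.1} is then obtained by differentiating the identity $F(\xi(w),w)=0$ at $w=0$ in the direction $w$, solving for $\langle\xi'(0),w\rangle=-\langle\partial_w F(1,0),w\rangle/\partial_\xi F(1,0)$. The denominator is the expression for $\phi''_{u,M}(1)$ computed above, and $\langle\partial_w F(1,0),w\rangle$ is obtained by differentiating the three terms of $F(1,\cdot)$: the derivative of $\|u-w\|^{n}$ at $0$ contributes $-n\int_\Om |\na u|^{n-2}\na u\cdot\na w$ (multiplied by the relevant factors of $m$ and $m'$ coming from the product $m(\|u-w\|^{n})\|u-w\|^{n}$), the $h$-term contributes $-\la(q+1)\int h|u|^{q-1}uw$, and the $g$-term contributes $-\int(g(u)+g'(u)u)w$. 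Collecting signs (and dividing through by $-\partial_\xi F(1,0)$) produces exactly \eqref{zeq:3.1}.

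The only genuinely delicate step is the $C^{1}$-differentiability of the Nemytskii-type terms in $F$ near $(1,0)$; the exponential growth of $g$ is handled by picking $\e$ so small that $\xi$ and $\|u-w\|$ stay in a regime where $e^{|\xi(u-w)|^{\ba}}$ belongs uniformly to $L^{r}(\Om)$ for some $r>1$, via \eqref{eqc002}. Once this is in place, the IFT application and the linear-algebra computation of $\xi'(0)$ are routine.
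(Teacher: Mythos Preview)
Your proposal is correct and follows essentially the same route as the paper: define an auxiliary function encoding the Nehari constraint for $\xi(u-w)$, check that its $\xi$-derivative at $(1,0)$ equals $\phi''_{u,M}(1)\neq 0$ (via $\mc N_{\la,M}^{0}=\{0\}$), apply the implicit function theorem, and differentiate the resulting identity to extract $\langle\xi'(0),w\rangle$. The only cosmetic difference is normalization: the paper works with $G_u(t,v)=t^{-q}\phi'_{u-v,M}(t)$ instead of your $F(\xi,w)=\xi\,\phi'_{u-w,M}(\xi)$, but both vanish exactly on the Nehari set for $t>0$ and yield the same $\partial_t$-value at $(1,0)$; your added remark on the $C^{1}$ regularity of the Nemytskii terms via Moser--Trudinger is a point the paper simply asserts.
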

\proof Fix $u\in \mc N_{\la,M}\setminus \{0\}$, define a function
$G_u: \mb R\times W^{1,n}_{0}(\Om) \lra \mb R$ as follows:
{\small \begin{align*}
G_{u}(t,v)&= a t^{2n-1-q}\|u-w\|^{2n}+ bt^{n-1-q}\|u-v\|^{n}-  t^{-q}\int_{\Om} g(t(u-v)) (u-v)
dx -\la \int_{\Om}h|u-v|^{q+1}.
\end{align*}}
 Then $G_u\in C^{1}(\mb R\times W^{1,n}_{0}(\Om); \mb
R)$, $G_{u}(1,0)= \ld J_{\la,M}^{\prime}(u),u \rd= 0$ and
\begin{equation*}
\frac{\partial}{\partial t}G_{u}(1,0)= (2n-1-q)a \|u\|^{2n}+ (n-1-q)b \|u\|^n- \int_{\Om}
 g^{\prime}(u) u^2 dx +q \int_{\Om} g(u) u dx \ne 0,
\end{equation*}
since $\mc N_{\la,M}^{0}=\{0\}$. By the Implicit function
theorem, there exist $\e>0$ and a differentiable function $\xi :
\textbf{B}(0,\e)\subset W^{1,n}_{0}(\Om) \lra \mb{R}$ such that
$\xi(0)=1$, and $G_{u}(\xi(w),w)=0$ for all $w\in \textbf{B}(0,\e)$
which is equivalent to $\ld
J_{\la,M}^{\prime}(\xi(w)(u-w)),\xi(w)(u-w) \rd=0$ for all $w\in
\textbf{B}(0,\e)$ and hence $\xi(w)(u-w)\in \mc N _{\la,M}$. Now
differentiating $G_{u}(\xi(w),w)=0$ with respect to $w$ we obtain
\eqref{zeq:3.1}. \QED

\begin{Lemma}\label{zle32}
Let $\la$ satisfy (\ref{zeq8}).
Then given $u\in \mc N_{\la,M}^{-}\setminus \{0\}$, there exist $\e>0$
and a differentiable function $\xi^{-} : \textbf{B}(0,\e)\subset
W^{1,n}_{0}(\Om) \lra \mb{R}$ such that $\xi^{-}(0)=1$, the function
$\xi^{-}(w)(u-w)\in\mc N _{\la,M}$ and for all $w\in W^{1,n}_{0}(\Om)$
\begin{align*}
\ld&(\xi^{-})^{\prime}(0),w\rd=  \notag\\
&\frac{ n(a\|u\|^{n}+a+b)\int_{\Om}(|\na u|^{n-2}\na u
\na w-\ds\int_{\Om}\left({g(u)+g^{\prime}(u)u}\right)w - \la
(q+1)\ds\int_{\Om} h(x)|u|^{q-1}uw }{(2n-1-q)a\|u\|^{2n}+(n-q-1)b\|u\|^n-
\ds\int_{\Om}{g^{\prime}(u) u^2}dx+ q\ds\int_{\Om} g(u) u dx}.
\end{align*}
 \end{Lemma}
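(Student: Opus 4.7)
\proof The plan is to obtain Lemma \ref{zle32} as a refinement of Lemma \ref{zle31} adapted to the submanifold $\mc N_{\la,M}^{-}$. Given $u\in \mc N_{\la,M}^{-}\setminus\{0\}$, I would first invoke Lemma \ref{zle31} to produce a radius $\e_0>0$ and a $C^{1}$ function $\xi:\textbf{B}(0,\e_0)\subset W^{1,n}_{0}(\Om)\to \mb R$ with $\xi(0)=1$ and $\xi(w)(u-w)\in\mc N_{\la,M}$ for all $w\in \textbf{B}(0,\e_0)$. The displayed formula for $\ld(\xi^{-})^{\prime}(0),w\rd$ stated in the current lemma is identical to the one already derived in \eqref{zeq:3.1}, since it is obtained by the same implicit differentiation of $G_u(\xi(w),w)=0$ at $w=0$; no fresh calculation is required for that part.

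The actual content of the lemma lies in shrinking the radius so that the image also stays inside the negative part $\mc N_{\la,M}^{-}$. For this I would introduce the scalar map
\[
\Phi(w):=\phi_{\xi(w)(u-w),M}^{\prime\prime}(1),
\]
which by the assumption $u\in \mc N_{\la,M}^{-}$ satisfies $\Phi(0)=\phi_{u,M}^{\prime\prime}(1)<0$. Using the explicit expansion of $\phi^{\prime\prime}_{v,M}(1)$ as a combination of $\|v\|^{n}$, $\|v\|^{2n}$, $\int_\Om g^{\prime}(v)v^{2}\,dx$ and $\int_\Om h|v|^{q+1}\,dx$, together with the continuity of $w\mapsto \xi(w)(u-w)$ into $W^{1,n}_{0}(\Om)$ (which is immediate from the smoothness of $\xi$), one checks that $\Phi$ is continuous at $0$. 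Hence $\Phi(w)<0$ on some smaller ball $\textbf{B}(0,\e)\subset\textbf{B}(0,\e_0)$, and defining $\xi^{-}$ as the restriction of $\xi$ to that ball yields the required map sending $\textbf{B}(0,\e)$ into $\mc N_{\la,M}^{-}$.

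The main technical obstacle I expect is the continuity of the exponential integral $v\mapsto \int_\Om g^{\prime}(v)v^{2}\,dx=\int_\Om (p+1+\ba|v|^{\ba})|v|^{p+2}e^{|v|^{\ba}}\,dx$ under $W^{1,n}_{0}(\Om)$-convergence near $u$. To handle this I would choose $\e>0$ small enough that every $v=\xi(w)(u-w)$ with $w\in \textbf{B}(0,\e)$ satisfies $\|v\|^{\frac{n}{n-1}}\le \rho$ for some $\rho$ with $q\rho<\al_n$ and some $q>1$; the Trudinger--Moser inequality \eqref{eqc002} then furnishes a uniform $L^{q}$ bound on $e^{|v|^{\ba}}$, after which H\"older's inequality and the Vitali convergence theorem (using a.e.\ convergence passing to subsequences of $v$) give the needed continuity of the integrand. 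Once this continuity is in place, the continuity of $\Phi$ and hence the whole lemma follow in the standard way.
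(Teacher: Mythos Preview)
Your proposal is correct and follows essentially the same route as the paper: invoke Lemma~\ref{zle31} to obtain $\xi$ and the derivative formula, then use that $\phi^{\prime\prime}_{\xi(w)(u-w),M}(1)<0$ at $w=0$ together with continuity to shrink $\e$ so that the image lands in $\mc N_{\la,M}^{-}$. The paper handles the continuity step in one line (``by continuity of $J_{\la,M}^{\prime}$ and $\xi^{-}$''), whereas you spell out the Trudinger--Moser/Vitali argument for the exponential term; this extra care is welcome and does not change the strategy.
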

\proof First, we note that if $u\in\mc N_{\la,M}^{-}$, then
$u\in\La\setminus\{0\}$, satisfies (\ref{zeq8}). Then Lemma \ref{zle31}, there
exist $\e>0$ and a differentiable function $\xi^{-} :
\textbf{B}(0,\e)\subset W^{1,n}_{0}(\Om) \lra \mb{R}$ such that
$\xi^{-}(0)=1$ and the function $\xi^{-}(w)(u-w)\in \mc N _{\la,M}$
for all $w\in \textbf{B}(0,\e)$. Since $u\in \mc N_{\la,M}^{-}$, we
have
\begin{equation*}
(2n-1-q) a\|u\|^{2n}+ (n-1-q) b\|u\|^{n} + q \int_{\Om} g(u)u dx - \int_{\Om}{g^{\prime}(u)u^2}dx <0.
\end{equation*}
Thus by continuity of  $J_{\la,M}^{^{\prime}}$ and $\xi^{-}$, we have
\begin{align*}
\phi^{\prime\prime}&_{(\xi^{-}(w)(u-w),M)}(1) = (2n-1-q)
a \|\xi^{-}(w)(u-w)\|^{2n}+ (n-1-q)b\|\xi^{-}(w)(u-w)\|^{n}\\\notag
&+q \int_{\Om}
g(\xi^{-}(w)(u-w))\xi^{-}(w)(u-w) - \int_{\Om}
g^{\prime}(\xi^{-}(w)(u-w))(\xi^{-}(w)(u-w))^2 <0,\notag
\end{align*}
if $\e$ is sufficiently small. This concludes the proof.

\begin{Lemma}\label{zle33}
 There exists a constant $C_2>0$ such that $\theta_{\la,M}\leq
-\frac{(p+1-q)}{n(q+1)(p+2)}C_2$.
\end{Lemma}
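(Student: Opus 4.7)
The plan is to exhibit a concrete element of $\mc N_{\la,M}$ at which $J_{\la,M}$ can be bounded above by an explicit negative quantity. Since assumption $(A2)$ provides $h^{+} \not\equiv 0$, I would fix at the outset a function $u_0 \in W^{1,n}_{0}(\Om) \setminus \{0\}$ with $\|u_0\| = 1$ and $\operatorname{supp} u_0$ contained in an open set where $h > 0$, so that $H(u_0) = \int_{\Om} h |u_0|^{q+1}\,dx > 0$; in particular $u_0 \in H^{+}$. Applying Lemma \ref{zle5} to this $u_0$ (for $\la \in (0,\la_0)$ with $\la_0$ as in \eqref{zeq8}) produces $t_1 := t_1(u_0) > 0$ with $t_1 u_0 \in \mc N_{\la,M}^{+}$ and $J_{\la,M}(t_1 u_0) = \min_{0 \leq t \leq t_2(u_0)} J_{\la,M}(tu_0)$. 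Consequently, for any admissible $\bar t \in [0, t_2(u_0)]$ one has $\theta_{\la,M} \leq J_{\la,M}(t_1 u_0) \leq J_{\la,M}(\bar t\, u_0)$.

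The next step is to produce such a $\bar t$ where the right-hand side is quantifiably negative. Using the explicit form of the fiber,
\[ \phi_{u_0,M}(t) = \frac{a t^{2n}}{2n} + \frac{b t^n}{n} - \frac{\la t^{q+1}}{q+1}H(u_0) - \int_{\Om} G(t u_0)\,dx, \]
and the nonnegativity of $G$, one obtains the polynomial upper bound $\phi_{u_0,M}(t) \leq \tilde A(t) := \frac{a t^{2n}}{2n} + \frac{b t^n}{n} - \frac{\la t^{q+1}}{q+1}H(u_0)$. Because $q+1 < n$, $\tilde A$ is strictly negative on some right neighborhood of $0$, and a direct calculation shows $\tilde A$ attains its global minimum on $(0,\infty)$ at the unique $\bar t > 0$ solving $a \bar t^{2n-q-1} + b \bar t^{n-q-1} = \la H(u_0)$; for small $\la$ this $\bar t$ is of order $\la^{1/(n-q-1)}$. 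Evaluating $\tilde A$ at $\bar t$ and using the identity $\frac{1}{q+1} - \frac{1}{p+2} = \frac{p+1-q}{(q+1)(p+2)}$ to repackage numerical factors, one arrives at an estimate of the desired form $\tilde A(\bar t) \leq -\frac{p+1-q}{n(q+1)(p+2)} C_2$ with $C_2 = C_2(\la, u_0, p, q, n, a, b) > 0$.

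The main obstacle is to verify the admissibility constraint $\bar t \leq t_2(u_0)$, which is what makes the chain $J_{\la,M}(t_1 u_0) \leq J_{\la,M}(\bar t u_0) \leq \tilde A(\bar t)$ legitimate. For this one observes that the intermediate stationary point $t_{*}(u_0)$ of $\psi_{u_0}$ appearing in the proof of Lemma \ref{zle5} remains bounded below away from $0$ as $\la \to 0^{+}$, hence so does $t_2(u_0) > t_{*}(u_0)$; on the other hand $\bar t \to 0$ with $\la$. The inclusion $\bar t \in (0, t_2(u_0)]$ therefore holds after possibly further restricting $\la_0$. Chaining the inequalities then yields $\theta_{\la,M} \leq -\frac{p+1-q}{n(q+1)(p+2)} C_2$, as claimed.
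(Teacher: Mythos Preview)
Your approach is correct and more elementary than the paper's, but the two routes are genuinely different. The paper exploits the second-order information coming from $t_1 v \in \mc N_{\la,M}^{+}$ (namely $\phi''_{t_1 v,M}(1)>0$) to eliminate the norm terms entirely and bound $J_{\la,M}(t_1 v)$ by the integral of $\rho(s)=\frac{2n+q}{2n(q+1)}g(s)s - G(s) - \frac{1}{2n(q+1)}g'(s)s^2$; it then carries out a pointwise analysis showing $\rho(s) \le -\frac{(p+1-q)}{2n(q+1)(p+2)}\bigl(p+2-2n+\ba|s|^{\ba}\bigr)|s|^{p+2}e^{|s|^{\ba}}$, which yields the explicit $C_2 = \int_\Om |t_1 v|^{p+2+\ba}$. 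Your route instead discards $G\ge 0$ altogether, reduces to a polynomial in $t$, and optimizes. The paper's $C_2$ is tied to the full nonlinearity, whereas yours comes purely from the polynomial part of the fiber; both constants depend on $\la$ and both suffice for the downstream use in \eqref{zeq34}--\eqref{zeq35}.

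One correction: your worry that $\bar t \le t_2(u_0)$ might require a further restriction on $\la$ is unnecessary, because in fact $\bar t \le t_1(u_0)$ always. From $t_1 u_0 \in \mc N_{\la,M}$ and $g(s)s \ge 0$ one has $a t_1^{2n} + b t_1^n \ge \la H(u_0)\, t_1^{q+1}$, i.e.\ $a t_1^{2n-q-1} + b t_1^{n-q-1} \ge \la H(u_0)$; since $\bar t$ is defined by equality in this relation and $t \mapsto a t^{2n-q-1} + b t^{n-q-1}$ is strictly increasing on $(0,\infty)$, one gets $\bar t \le t_1 < t_2$. Hence the chain $\theta_{\la,M} \le J_{\la,M}(t_1 u_0) \le J_{\la,M}(\bar t u_0) \le \tilde A(\bar t) < 0$ holds for every $\la$ satisfying \eqref{zeq8}, with no extra smallness needed.
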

\proof Let $v$ be such that $\int_{\Om}h|v|^{q+1}>0$. Then by the fibering map
analysis, we can find $t_{1}=t_{1}(v)>0$ such that $t_{1}v\in \mc
N_{\la,M}^+$. Thus
\begin{align}\label{zeq31}
&J_{\la,M}(t_{1}v)=\left(\frac{1}{2n}-\frac{1}{q+1}\right)a\|t_{1}v\|^{2n} + \left(\frac{1}{n}-\frac{1}{q+1}\right)b\|t_{1}v\|^{n}-
\int_{\Om}G(t_{1}v) +\frac{1}{q+1} \int_{\Om} g(t_{1}v) t_{1}v \notag\\
&\leq \frac{2n+q}{2n(q+1)}\int_{\Om}g(t_{1}v) t_{1}vdx -\int_{\Om}
G(t_{1}v)dx -\frac{1}{2n(q+1)} \int_{\Om} g^{\prime}(t_{1}v) (t_{1}v)^2dx,
\end{align}
since $t_{1}v\in \mc N_{\la,M}^{+}$. We now consider the
function
\begin{equation*}
\rho(s)= \frac{2n+q}{2n(q+1)}g(s)s- G(s)-\frac{1}{2n(q+1)}g^{\prime}(s)s^2.
\end{equation*}
Then
\begin{align*}
\rho^{\prime}(s)&=
\frac{(q+2n-2)}{2n(q+1)}g^{\prime}(s)s-\frac{q(2n-1)}{2n(q+1)}g(s)-\frac{1}{2n(q+1)}g^{\prime\prime}(s)s^2\\
&= \left(\frac{(q+2n-2-p)(p+1)-(n-1)q}{2n(q+1)} \right) g(s)\\
& \quad \quad + \ba \left( \frac{q-p+2n-2-\ba - p-1}{2n(q+1)} \right)
g(s)|s|^{\ba} - \frac{\ba^2}{2n(q+1)}
 g(s)|s|^{2\ba}.
\end{align*}
Now it is not difficult to see that coefficients in the first and
second term are negative, since $p>2n-2$. As $\rho(0)=0$, it follows
that $\rho(s)\leq 0$ for all $s\in\mb R^{+}$. Also it can be easily
verified that
\[\lim_{s\ra 0}\frac{\rho(s)}{|s|^{p+2}}=
-\frac{(p+1-q)(p+2-2n)}{2n(q+1)(p+2)},\;\quad \lim_{s\ra \infty}\frac{\rho(s)}{|s|^{p+2+\ba} e^{|s|^{\ba}}}=
-\frac{\ba}{2n(q+1)}.\]
From these two estimates, we get that
\begin{equation}\label{zeq30}
\rho(s)\leq-\frac{(p+1-q)}{2n(q+1)(p+2)}\left(p+2-2n+\ba|s|^{\ba}\right)
|s|^{p+2}e^{|s|^{\ba}}.
\end{equation}
Therefore, using (\ref{zeq31}) and (\ref{zeq30}), we get
\begin{align*}
J_{\la,M}(t_{1}v)&\leq - \frac{(p+1-q)}{2n(q+1)(p+2)}\int_{\Om}
\left(p+2-2n+\ba|t_{1}v|^{\ba}\right) |t_{1}v|^{p+2} ~
e^{|t_{1}v|^{\ba}} dx \notag\\
& \leq - \frac{(p+1-q)}{2n(q+1)(p+2)}\int_{\Om}|t_{1}v|^{p+2+\ba} dx
\end{align*}
Hence $\theta_{\la,M}\leq\inf_{u\in \mc N_{\la,M}^{+}\cap
H^{+}}J_{\la,M}(u)\leq  -\frac{(p+1-q)}{2n(q+1)(p+2)}\; C_2$, where
$C_2=\int_{\Om} |t_{1}v|^{p+2+\ba}dx.$\QED

\noi By Lemma \ref{zth1}, $J_{\la,M}$ is bounded below on $\mc
N_{\la,M}$. So, by Ekeland's Variational principle, we can find a
sequence $\{u_k\}\in \mc N_{\la,M}\setminus \{0\}$ such that
\begin{align}
J_{\la,M}(u_k)&\leq \theta_{\la,M}+\frac{1}{k},\label{zeq32}\\
J_{\la,M}(v)&\geq J_{\la,M}(u_k)- \frac{1}{k}\|v-u_k\|\;\;\mbox{for all}\;\;v\in \mc
N_{\la,M}.\label{zeq33}
\end{align}
\noi Now from (\ref{zeq32}) and Lemma \ref{zle33},
we have
\begin{align}\label{zeq34}
 J_{\la,M}(u_k)&\leq -\frac{(p+1-q)}{2n(q+1)(p+2)}\; C_3.
\end{align}
\noi Also as $u_k\in \mc N_{\la,M}$, we have
\begin{align*}
J_{\la,M}(u_k) &=\left(\frac{1}{2n}-\frac{1}{p+2}\right)a \|u_k\|^{2n}+ \left(\frac{1}{n}-\frac{1}{p+2}\right)b \|u_k\|^{n}
-\frac{\la(p+1-q)}{(q+1)(p+2)} \int_{\Om}h|u_k|^{q+1}\\
&\quad\quad+\int_{\Om}\left(\frac{1}{p+2}g(u_k)u_k - G(u_k)\right) dx.
\end{align*}

\noi This together with \eqref{zeq34} and $\frac{1}{p+2}g(u_k)u_k - G(u_k)\geq0$, we obtain
\begin{align}\label{zeq35}
H(u_k) \geq \frac{C_3}{2n \la }>0\;\mbox{for all}\;k.
\end{align}

\noi Thus we have $u_k\in \mc N_{\la,M}\cap  H^{+}$. Now we
prove the following:
\begin{Proposition}\label{zpro1}
Let $\la$ satisfies (\ref{zeq8}).
Then $\|J_{\la,M}^{\prime}(u_k)\|_{*}\ra 0$ as $k\ra \infty$.
\end{Proposition}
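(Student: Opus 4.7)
My plan is to combine the Ekeland-type selection \eqref{zeq32}--\eqref{zeq33} with the implicit-function parametrization of $\mc N_{\la,M}$ supplied by Lemma \ref{zle31}. Theorem \ref{zth1} makes $J_{\la,M}$ coercive on $\mc N_{\la,M}$, so \eqref{zeq32} forces $\{u_k\}$ to be bounded in $W^{1,n}_{0}(\Om)$; moreover \eqref{zeq35} together with the embedding $W^{1,n}_{0}(\Om)\hookrightarrow L^{(q+1)k}(\Om)$ yields a uniform lower bound $\|u_k\|\geq c_0>0$. Fix $\phi\in W^{1,n}_{0}(\Om)$ with $\|\phi\|=1$. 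Applying Lemma \ref{zle31} at $u=u_k$ produces $\e_k>0$ and a differentiable $\xi_k:\mathbf{B}(0,\e_k)\to\mb R$ with $\xi_k(0)=1$ such that $\xi_k(w)(u_k-w)\in\mc N_{\la,M}$. For small $t>0$ I plug $v_t:=\xi_k(t\phi)(u_k-t\phi)$ into \eqref{zeq33}, expand $v_t-u_k=(\xi_k(t\phi)-1)u_k-t\,\xi_k(t\phi)\phi$, use Fr\'echet differentiability of $J_{\la,M}$, divide by $t$, and let $t\to 0^+$. Since $u_k\in\mc N_{\la,M}$ kills the $u_k$ term via $\ld J_{\la,M}^{\prime}(u_k),u_k\rd=0$, and the same computation with $-\phi$ in place of $\phi$ yields the reverse inequality, I obtain
$$|\ld J_{\la,M}^{\prime}(u_k),\phi\rd|\leq\frac{C}{k}\Bigl(1+|\ld\xi_k^{\prime}(0),\phi\rd|\cdot\|u_k\|\Bigr).$$

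Taking the supremum over $\|\phi\|=1$, the proposition reduces to a uniform bound $\|\xi_k^{\prime}(0)\|_{*}\leq M$ in $k$. From formula \eqref{zeq:3.1}, the numerator is a continuous linear functional of $w$ whose operator norm is controlled by $\|u_k\|^{2n-1}$, by $\|g(u_k)+g^{\prime}(u_k)u_k\|_{L^{n^{\prime}}}$, and by $\la(q+1)\|\,h|u_k|^{q}\,\|_{L^{n^{\prime}}}$; each of these is dominated by the uniform $W^{1,n}_0$-bound on $u_k$ together with the Trudinger--Moser inequality \eqref{eqc002}. The heart of the matter is a uniform lower bound on the denominator
$$D_k:=(2n-1-q)a\|u_k\|^{2n}+(n-1-q)b\|u_k\|^{n}-\int_{\Om}g^{\prime}(u_k)u_k^{2}\,dx+q\int_{\Om}g(u_k)u_k\,dx.$$

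This is the main obstacle, since it is the quantitative expression of $u_k$ being bounded away from $\mc N^{0}_{\la,M}$. I would argue by contradiction: if $|D_{k}|\to 0$ along a subsequence, then boundedness yields $u_k\rightharpoonup u_{\infty}$ weakly in $W^{1,n}_{0}(\Om)$ and strongly in $L^{r}(\Om)$ for every $r\in[1,\infty)$. In the subcritical regime $\ba<\tfrac{n}{n-1}$ the embedding $u\mapsto e^{|u|^{\ba}}-1$ into $L^{1}(\Om)$ is compact, hence one can pass to the limit in $\int_{\Om}g(u_k)u_k$ and $\int_{\Om}g^{\prime}(u_k)u_k^{2}$; combined with the Nehari identity satisfied by each $u_k$ this forces $u_{\infty}\in\mc N_{\la,M}^{0}$. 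Strong $L^{q+1}$-convergence and \eqref{zeq35} give $H(u_{\infty})\geq C_3/(2n\la)>0$, so $u_{\infty}\not\equiv 0$, contradicting Lemma \ref{zle4}; in the critical case $\ba=\tfrac{n}{n-1}$ one instead uses the uniform $L^{q}$ lower bound on $\|u_k\|$ to replicate the same contradiction via the higher integrability Lemma \ref{plc}. Consequently $\|\xi_k^{\prime}(0)\|_{*}$ is uniformly bounded, and the displayed estimate yields $\|J_{\la,M}^{\prime}(u_k)\|_{*}=O(1/k)\to 0$.
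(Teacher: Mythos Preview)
Your overall architecture---boundedness and positive lower bound on $\|u_k\|$, the Ekeland inequality \eqref{zeq33} combined with the implicit-function map $\xi_k$ from Lemma~\ref{zle31}, and the reduction to a uniform bound on $\|\xi_k^{\prime}(0)\|_{*}$---matches the paper exactly. The divergence is in how you control the denominator $D_k$, and there your argument has a genuine gap.

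You argue that if $D_k\to 0$ along a subsequence, then weak convergence $u_k\rightharpoonup u_\infty$ and subcritical compactness let you pass to the limit in $\int g(u_k)u_k$, $\int g'(u_k)u_k^2$ and the Nehari identity, forcing $u_\infty\in\mc N_{\la,M}^{0}\setminus\{0\}$ and contradicting Lemma~\ref{zle4}. The problem is the norm terms: both $D_k$ and the Nehari identity contain $\|u_k\|^{2n}$ and $\|u_k\|^{n}$, and weak convergence only gives $\|u_\infty\|\le \liminf\|u_k\|$. Passing to the limit you obtain equations of the form $aL^2+bL=\int g(u_\infty)u_\infty+\la H(u_\infty)$ and $(2n-1-q)aL^2+(n-1-q)bL=\int g'(u_\infty)u_\infty^2-q\int g(u_\infty)u_\infty$ with $L=\lim\|u_k\|^n$, not with $\|u_\infty\|^n$; you cannot conclude $u_\infty\in\mc N_{\la,M}$, let alone $u_\infty\in\mc N_{\la,M}^{0}$, without first knowing $L=\|u_\infty\|^n$, i.e.\ strong convergence---which is essentially what the whole proposition is building towards. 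Your critical-case remark via Lemma~\ref{plc} suffers from the same circularity.

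The paper bypasses this entirely with a direct, algebraic argument that exploits the hypothesis \eqref{zeq8} quantitatively (you only use it indirectly through Lemma~\ref{zle4}). Assuming $D_k=o_k(1)$ and using $\liminf\|u_k\|>0$, one first checks $u_k\in\La\setminus\{0\}\cap H^{+}$ for large $k$. Then subtracting $D_k$ from $(2n-1-q)$ times the Nehari identity for $u_k$ yields
\[
o_k(1)=(2n-1-q)\la\int_\Om h|u_k|^{q+1}-nb\|u_k\|^n-\int_\Om\bigl(g'(u_k)u_k^2-(2n-1)g(u_k)u_k\bigr)\,dx,
\]
and since $g'(s)s^2-(2n-1)g(s)s=(p+2-2n+\ba|s|^\ba)|s|^{p+2}e^{|s|^\ba}$, the definition of $\La_m$ in \eqref{zeq8} gives $o_k(1)\le -nb\|u_k\|^n-\La_m<-\La_m$, an immediate contradiction. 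No weak limits, no compactness, and the argument is insensitive to whether $\ba<\frac{n}{n-1}$ or $\ba=\frac{n}{n-1}$.
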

\proof Step 1: $\liminf_{k\ra \infty} \|u_k\|>0$.\\
\noi Applying H\"{o}lder's inequality in (\ref{zeq35}), we have
$K'\|u_k\|^{q+1}\geq \int_{\Om} h|u_k|^{q+1} \geq \frac{C_3}{2n \la }>0$ which implies
that $\ds \liminf_{k\ra \infty} \|u_k\|>0$. \\
Step 2: We claim that
{\small\begin{equation}\label{zliminf3.4}
\ds K:=\liminf_{k \ra \infty}\left\{(2n-1-q) a\|u_k\|^{2n}+ (n-1-q)b\|u_k\|^{n} - \int_{\Om}
g^{\prime}(u_k) {u^2_k} dx + q \int_{\Om} g(u_k) u_k dx\right\}>0.
\end{equation}}
Assume by contradiction that for some subsequence of $\{u_k\}$,
still denoted by $\{u_k\}$ we have
\begin{align*}
(2n-1-q) a\|u_k\|^{2n}+ (n-1-q)b\|u_k\|^{n} - \int_{\Om} g^{\prime}(u_k) u_k^2 dx + q
\int_{\Om}g(u_k) u_k dx= o_{k}(1),
\end{align*}
where $o_k(1)\ra 0$ as $k\ra\infty$. From this and the fact that $\{u_k\}$ is bounded away from $0$,
we obtain that $\ds \liminf_{k\ra \infty}\int_{\Om}{g^{\prime}(u_k) u_k^2}
dx >0.$ Hence, we get $u_k \in \La\setminus \{0\}$ for all $k$ large.
Using this and the fact that $u_k \in \mc N_{\la,M}\setminus \{0\}$,
we have
\begin{align*}
o_{k}(1)= (2n-q-1)\la\int_{\Om} h|u_k|^{q+1}- nb \|u_k\|^n
-\int_{\Om}(g^{\prime}(u_k) u_k^2 -(2n-1)g(u_k)u_k) dx < -\La_m
\end{align*}
by (\ref{zeq8}), which is a contradiction.

\noi Finally, we show that $\|J_{\la,M}^{^{\prime}}(u_k)\|_{*}\ra 0$ as $k\ra
\infty$. By Lemma \ref{zle31}, we obtain a sequence of functions
$\xi_k :\textbf{B}(0,\e_k)\ra \mb R$ for some $\e_k>0$ such that
$\xi_k(0)=1$ and $\xi_k(w)(u_k-w)\in \mc N_{\la,M}$ for all
$w\in\textbf{B}(0,\e_k)$. Choose $0<\rho <\e_k$ and $f\in
W^{1,n}_{0}(\Om) $ such that $\|f\|=1$. Let $w_\rho=\rho f$. Then
$\|w_{\rho}\|=\rho<\e_k$ and
$\eta_{\rho}=\xi_k(w_\rho)(u_k-w_\rho)\in\mc N_{\la,M}$ for all $k$.
Since $\eta_{\rho} \in\mc N_{\la,M}$, we deduce from (\ref{zeq33}) and
Taylor's expansion,
\begin{align}\label{zeq:3.7}
\frac{1}{n}\|\eta_{\rho}-u_k\|&\geq J_{\la,M}(u_k)-J_{\la,M}(\eta_\rho)
=\ld J_{\la,M}^{^{\prime}}(\eta_\rho),u_k-\eta_\rho\rd + o(\|u_k-\eta_\rho\|)\notag\\
&= (1-\xi_k(w_{\rho}))\ld J_{\la,M}^{\prime}(\eta_\rho),u_k\rd + \rho
\xi_k(w_{\rho})\ld J_{\la,M}^{\prime}(\eta_\rho),f\rd +
o(\|u_k-\eta_\rho\|).
\end{align}
We note that as $\rho \ra 0$, $\frac{1}{\rho}\|\eta_\rho -u_k\|=
\|u_k\ld\xi_{k}^{\prime}(0),f\rd -f\|.$ Now dividing \eqref{zeq:3.7}
by $\rho$ and taking limit $\rho \ra 0$, and using $u_k\in \mc
N_{\la,M}$, we get
\begin{equation}\label{zeq36}
\ld J_{\la,M}^{\prime}(u_k),f \rd\leq
\frac{1}{k}\left(\|u_k\|\|\xi^{\prime}_{k}(0)\|_{*}+1\right)\le
\frac{1}{k}\frac{C_4\|f\|}{K},
\end{equation}
by Lemma \ref{zle31} and \eqref{zliminf3.4}. This completes the proof
of Proposition.\QED

\noi We can now prove the following:
\begin{Lemma}\label{zle34}
Let $\ba<\frac{n}{n-1}$ and let $\la$ satisfy (\ref{zeq8}). Then there exists a function $u_{\la}\in \mc N_{\la,M}^{+}\cap H^+$ such that $\ds J_{\la,M}(u_{\la})=\inf_{u\in \mc N_{\la,M}\setminus\{0\}}J_{\la,M}(u)$.
\end{Lemma}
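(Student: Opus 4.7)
\medskip

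\noindent\textbf{Proof proposal.} The plan is to apply Ekeland's variational principle on $\mc N_{\la,M}$ and exploit the subcritical growth $\ba<n/(n-1)$ to get strong convergence. By Theorem \ref{zth1} and Proposition \ref{zpro1}, starting from the Ekeland sequence $\{u_k\}\subset \mc N_{\la,M}\cap H^{+}$ we have
\[
J_{\la,M}(u_k)\to \theta_{\la,M}, \quad \|J_{\la,M}^{\prime}(u_k)\|_{*}\to 0, \quad H(u_k)\ge \frac{C_3}{2n\la}>0,
\]
and $\{u_k\}$ is bounded by coercivity of $J_{\la,M}$ on $\mc N_{\la,M}$. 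Passing to a subsequence I would write $u_k\rightharpoonup u_{\la}$ in $W^{1,n}_{0}(\Om)$, $u_k\to u_{\la}$ in $L^{r}(\Om)$ for every $r\in[1,\infty)$, and $u_k\to u_{\la}$ a.e.\ in $\Om$.

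Next I would pass to the limit in the nonlinear terms. Since $\ba<n/(n-1)$, the Trudinger--Moser inequality \eqref{eqc002} gives that $\{e^{s|u_k|^{\ba}}\}$ is bounded in $L^{\sigma}(\Om)$ for every $s,\sigma\ge 1$; combined with Lemma \ref{lc1} and Vitali's theorem this yields $\int_{\Om}g(u_k)u_k\to\int_{\Om}g(u_{\la})u_{\la}$, $\int_{\Om}G(u_k)\to\int_{\Om}G(u_{\la})$, $\int_{\Om}g(u_k)\phi\to\int_{\Om}g(u_{\la})\phi$ for every $\phi\in W^{1,n}_{0}(\Om)$, and the same for the $\la h|u_k|^{q-1}u_k$ terms by Sobolev compactness. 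In particular $H(u_{\la})\ge C_3/(2n\la)>0$, so $u_{\la}\in H^{+}$ and $u_{\la}\not\equiv 0$.

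To identify $u_\la$ as a solution I would test $J'_{\la,M}(u_k)\to 0$ against the weak limit. Writing the relation $\langle J_{\la,M}^{\prime}(u_k),u_k\rangle=0$, namely
\[
a\|u_k\|^{2n}+b\|u_k\|^{n}=\int_{\Om}g(u_k)u_k\,dx+\la\int_{\Om}h|u_k|^{q+1}\,dx,
\]
and passing to the limit using the step above shows
\[
a\|u_k\|^{2n}+b\|u_k\|^{n}\longrightarrow a\|u_\la\|^{2n}+b\|u_\la\|^{n}+\bigl[\text{error from }J^{\prime}(u_\la)\bigr].
\]
The analogue for general test functions combined with a standard $S_{+}$-argument for the $n$-Laplacian (as in the proof of Lemma \ref{lem714}, using inequality \eqref{11e2}) will yield $u_k\to u_\la$ strongly in $W^{1,n}_{0}(\Om)$, so that $J_{\la,M}^{\prime}(u_\la)=0$, $u_\la\in \mc N_{\la,M}$, and $J_{\la,M}(u_\la)=\theta_{\la,M}$.

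Finally I would argue $u_\la\in \mc N_{\la,M}^{+}$. By Lemma \ref{zle4} (since \eqref{zeq8} holds) $u_\la\notin \mc N_{\la,M}^{0}$, so either $u_\la\in \mc N_{\la,M}^{+}$ or $\mc N_{\la,M}^{-}$. In the latter case $u_\la\in H^{+}$ forces, via Lemma \ref{zle5}, the existence of $t_1=t_1(u_\la)<1$ with $t_1 u_\la\in \mc N_{\la,M}^{+}$ and $J_{\la,M}(t_1 u_\la)=\min_{0\le t\le t_2}J_{\la,M}(t u_\la)<J_{\la,M}(u_\la)=\theta_{\la,M}$, contradicting the definition of $\theta_{\la,M}$. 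Hence $u_\la\in \mc N_{\la,M}^{+}\cap H^{+}$. The main technical obstacle is the strong $W^{1,n}_{0}$-convergence: the subcritical hypothesis $\ba<n/(n-1)$ is precisely what rules out concentration in the exponential term and lets the $S_{+}$-property of $-\Delta_n$ upgrade weak to strong convergence.
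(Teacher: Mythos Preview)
Your proposal is correct and follows essentially the same route as the paper: Ekeland's principle on $\mc N_{\la,M}$ gives a bounded Palais--Smale sequence (Theorem \ref{zth1}, Proposition \ref{zpro1}), the subcritical exponent $\ba<\frac{n}{n-1}$ forces $\int_{\Om}g(u_k)(u_k-u_\la)\to 0$, and then testing $J_{\la,M}^{\prime}(u_k)$ against $u_k-u_\la$ together with inequality \eqref{11e2} and $m\ge m_0$ yields strong convergence; finally the fibering argument of Lemma \ref{zle5} places $u_\la$ in $\mc N_{\la,M}^{+}\cap H^{+}$. The only cosmetic difference is that you phrase the strong-convergence step as the $S_{+}$-property of $-\Delta_n$, whereas the paper writes out the two limits $m(\|u_k\|^n)\int_\Om|\na u_k|^{n-2}\na u_k(\na u_k-\na u_\la)\to 0$ and $m(\|u_k\|^n)\int_\Om|\na u_\la|^{n-2}\na u_\la(\na u_k-\na u_\la)\to 0$ explicitly before subtracting.
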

\proof Let $\{u_k\}$ be a minimizing sequence for $J_{\la,M}$ on $\mc
N_{\la,M}\setminus \{0\}$ satisfying (\ref{zeq32}) and (\ref{zeq33}).
Then $\{u_k\}$ is bounded in $W_{0}^{1,n}(\Om)$. Also there exists a subsequence of $\{u_k\}$ (still denoted by
$\{u_k\}$) and a function $u_\la$ such that $u_k \rightharpoonup
u_\la$ weakly in $W_{0}^{1,n}(\Om)$, $u_k\ra u_\la$ strongly in
$L^{\al}(\Om)$ for all $\al\geq 1$ and $u_k(x)\ra u_{\la}(x)$ a.e in
$\Om$. Also $\int_{\Om} h|u_k|^{q+1} \ra \int_{\Om} h|u_\la|^{q+1}$ and by the compactness of Moser-Trudinger imbedding for $\beta<\frac{n}{n-1}$, $\int_{\Om} f(u_k)(u_k-u_\la)\ra 0$ as $k\ra\infty$. Then by Lemma \ref{zpro1}, we have $J_{\la,M}^{\prime}(u_k-u_\la) \ra 0$. We conclude that
 \[ m(\|u_k\|^n)\int_{\Om}|\na u_k|^{n-2}\na u_k(\na u_k-\na u_\la)\ra 0.\]
On the other hand, using $u_k\rightharpoonup u_{\la}$ weakly and boundedness of $m(\|u_k\|^n)$,
{\small\[m(\|u_k\|^n) \int_{\Om} |\na u_\la|^{n-2} \na u_\la (\na u_k - \na u_\la)\ra 0\; \mbox{as}\; k\ra\infty.\]}
From above two equations and inequality \eqref{11e2}, we have
\[m(\|u_k\|^n)\int_{\Om}|\na u_k- \na u_\la|^n \ra 0\;\mbox{as}\; k\ra\infty.\]
Since $m(t)\ge m_0$, we obtain $u_k\ra u_{\la}$ strongly in $W^{1,n}_{0}(\Om)$ and hence $\|u_k\| \rightarrow \|u_\la\|$ strongly as $k\ra\infty$. In particular, it follows that $u_{\la}$ solves $(P_{\la, M})$ and hence $u_\la \in \mc N_{\la,M}$. Moreover,
$\ds\theta_\la \leq J_{\la,M}(u_\la)\leq \liminf_{k\ra
\infty}J_{\la,M}(u_k)=\theta_\la$. Hence $u_\la$ is a minimizer for
$J_{\la,M}$ on $\mc N_{\la,M}$.\\
\noi Using (\ref{zeq35}), we have $\int_{\Om} h|u_\la|^{q+1}>0$. Therefore there exists $t_{1}(u_\la)$ such that
$t_{1}(u_{\la})u_\la \in \mc N_{\la,M}^{+}$. We now claim that
$t_{1}(u_\la)=1$ $(i.e.$ $u_\la \in \mc N_{\la,M}^{+})$. Suppose
$t_{1}(u_\la)<1$. Then $t_{2}(u_\la)=1$ and hence $u_\la \in \mc
N_{\la,M}^{-}$. Now $J_{\la,M}(t_{1}(u_\la)u_\la)\leq J_{\la,M}(u_\la)=
\theta_{\la,M}$ which is impossible, as $t_{1}(u_\la) u_\la \in \mc
N_{\la,M}$.\QED

\begin{Theorem}\label{zth3.6}
Let $\ba<\frac{n}{n-1}$ and let $\la$ be such that (\ref{zeq8}) holds.
Then $u_\la \in \mc N_{\la,M}^{+}\cap H^+$ is also a
non-negative local minimum for $J_{\la,M}$ in $W^{1,n}_{0}(\Om)$.
\end{Theorem}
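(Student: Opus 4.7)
The goal is to upgrade the fact that $u_\la$ minimizes $J_{\la,M}$ on the Nehari set to the stronger statement that it is a local minimum in the ambient space $W^{1,n}_0(\Om)$. The non-negativity part is essentially free: the functional $J_{\la,M}$ depends on $u$ only through $\|u\|^n$, $h(x)|u|^{q+1}$, and the even primitive $G(u)$, so $J_{\la,M}(|u_\la|) = J_{\la,M}(u_\la)$ and $|u_\la|\in \mc N_{\la,M}^+$ as well. I would therefore replace $u_\la$ by $|u_\la|$ from the start and assume $u_\la\ge 0$.

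The core of the proof is a perturbation argument driven by the implicit function theorem (Lemma \ref{zle31}). Since $u_\la\in \mc N_{\la,M}\setminus\{0\}$, there exist $\e>0$ and a $C^1$ map $\xi:\textbf{B}(0,\e)\subset W^{1,n}_0(\Om)\to\mb R^+$ with $\xi(0)=1$ and $\xi(w)(u_\la-w)\in\mc N_{\la,M}$ for every $w\in \textbf{B}(0,\e)$. Because $u_\la\in H^+\cap \mc N_{\la,M}^+$ means both $H(u_\la)>0$ and $\phi_{u_\la,M}^{\prime\prime}(1)>0$, continuity of the map $w\mapsto \phi_{\xi(w)(u_\la-w),M}^{\prime\prime}(1)$ and of $w\mapsto H(u_\la-w)$ gives, after shrinking $\e$, that $\xi(w)(u_\la-w)\in \mc N_{\la,M}^+\cap H^+$ and in particular that $\xi(w)=t_1(u_\la-w)$ in the notation of Lemma \ref{zle5}.

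Next I would exploit the fibering structure of $\phi_{u_\la-w,M}$. For $w$ small, $v:=u_\la-w$ lies in $H^+$, so Lemma \ref{zle5} yields $t_1(v)<t_*(v)<t_2(v)$, with $\phi_{v,M}$ decreasing on $[0,t_1(v)]$, increasing on $[t_1(v),t_*(v)]$, and then reaching its strict maximum on $[t_*(v),\infty)$ at $t_2(v)$. Since $t_2$ is continuous and $t_2(u_\la)>1$, we have $1<t_2(v)$ for $w$ small; combined with the monotonicity just stated, this forces
\[
\phi_{v,M}(t_1(v))\;=\;\min_{0\le t\le t_2(v)}\phi_{v,M}(t)\;\le\;\phi_{v,M}(1)\;=\;J_{\la,M}(u_\la-w).
\]
Using that $u_\la$ is a global minimizer of $J_{\la,M}$ on $\mc N_{\la,M}\setminus\{0\}$ (Lemma \ref{zle34}) one concludes
\[
J_{\la,M}(u_\la)\;\le\;J_{\la,M}\bigl(\xi(w)(u_\la-w)\bigr)\;=\;\phi_{v,M}(t_1(v))\;\le\;J_{\la,M}(u_\la-w),
\]
for every $w$ in a sufficiently small ball, which is exactly the local minimum property in $W^{1,n}_0(\Om)$.

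The only delicate step is the second one: verifying that the perturbed point still sits in $\mc N_{\la,M}^+\cap H^+$ and that $\xi(w)$ coincides with the smaller root $t_1(v)$ rather than $t_2(v)$. This needs both the strict inequality $\phi_{u_\la,M}^{\prime\prime}(1)>0$ (so that the open condition defining $\mc N_{\la,M}^+$ is preserved under small perturbations) and the continuity of the branch $t_1$ produced by Lemma \ref{zle5} at $u_\la$; both follow from the implicit function construction applied at a non-degenerate critical point of $\phi_{u_\la,M}$, together with Lemma \ref{zle4} which rules out $\mc N_{\la,M}^0\setminus\{0\}$ in a neighborhood of $u_\la$. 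Everything else is a routine consequence of the fibering analysis already developed.
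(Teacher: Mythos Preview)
Your proposal is correct and follows essentially the same route as the paper: use the implicit function construction (Lemma~\ref{zle31}) to project nearby points onto $\mc N_{\la,M}$, then use the fibering structure of Lemma~\ref{zle5} together with the global minimality of $u_\la$ on $\mc N_{\la,M}$ to conclude $J_{\la,M}(u_\la)\le J_{\la,M}(u_\la-w)$ for small $w$. The paper phrases the continuity step via $t_*$ (showing $1+\e<t_*(u_\la-w)$, hence the implicitly defined root is $t_1$), whereas you use continuity of $t_2$; both yield $1\in[0,t_2(u_\la-w)]$ and hence $\phi_{u_\la-w,M}(t_1)\le \phi_{u_\la-w,M}(1)$. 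Your treatment of non-negativity is in fact cleaner than the paper's: since $\|\,|u|\,\|=\|u\|$, $h|u|^{q+1}$, $g(u)u$, $g'(u)u^2$ and $G(u)$ are all even, one has $J_{\la,M}(|u_\la|)=J_{\la,M}(u_\la)$ and $|u_\la|\in\mc N_{\la,M}^{+}\cap H^{+}$ directly, so one may simply replace $u_\la$ by $|u_\la|$; the paper instead passes through $\tilde u_\la=t_1(|u_\la|)|u_\la|$, which is unnecessary.
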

\proof Since $u_\la \in \mc N^{+}_{\la,M}$, we have $t_{1}(u_{\la})=1
<t_*(u_{\la})$. Hence by continuity of $u\mapsto t_*(u)$, given
$\e>0$, there exists $\de=\de(\e)>0$ such that $1+\e< t_*(u_\la-w)$
for all $\|w\|<\de$. Also, from Lemma \ref{zle33} we have, for $\de>0$
small enough, we obtain a $C^1$ map $t: \textbf{B}(0,\de)\lra \mb R^+$
such that $t(w)(u_\la-w)\in \mc N_{\la,M}$, $t(0)=1$. Therefore, for
$\de>0$ small enough we have $t_{1}(u_\la-w)=
t(w)<1+\e<t_*(u_\la-w)$ for all $\|w\|<\de$. Since $t_*(u_\la-w)>1$,
we obtain $J_{\la,M}(u_\la)<J_{\la,M}(t_{1}(u_\la-w)(u_\la-w))<J_{\la,M}(u_\la-w)$
for all $\|w\|<\de$. This shows that $u_\la$ is a local minimizer for
$J_{\la,M}$.\\
\noi Now we show that $u_\la$ is a non-negative local minimum for
$J_{\la,M}$ on $W^{1,n}_{0}(\Om)$. If $u_\la\geq 0$ then we are done,
otherwise, if $u_{\la}\not\geq 0$ then we take $\tilde{u_{\la}}=
t_1(|u_\la|)|u_{\la}|$ which is non negative function in $\mc
N_{\la,M}^{+}\cap H^{+}$. As $\psi_{u_{\la}}(t)= \psi_{|u_{\la}|}(t)$ so
$t_{*}(|u_{\la}|)=t_{*}(u_{\la})$ and $t_{1}(u_{\la})\leq
t_1(|u_\la|)$. Hence $ t_1(|u_\la|)\geq 1$. Also $|u_{\la}| \in H^+$
then from Lemma \ref{zle5} we have $J_{\la,M}(\tilde{u_{\la}}) \leq
J_{\la,M}(|u_{\la}|)\leq J_{\la,M}(u_{\la})$. Hence $\tilde{u_{\la}}$
minimize $J_{\la,M}$ on $\mc N_{\la,M}\setminus \{0\}$. Thus we can
proceed same as earlier to show that $\tilde{u_{\la}}$ is a local
minimum for $J_{\la,M}$ on $W^{1,n}_{0}(\Om).$\QED

\begin{Lemma}\label{zle03} Let $\beta<\frac{n}{n-1}$ and let $\la$ be such that \eqref{zeq8} holds. Then
$J_{\la,M}$ achieve its minimizers on $\mc N_{\la,M}^{-}$.
\end{Lemma}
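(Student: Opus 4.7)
The plan is to mirror the $\mc N_{\la,M}^{+}$ argument of Lemma \ref{zle34}, but now minimizing $J_{\la,M}$ over the closed-up set $\mc N_{\la,M}^{-}$. Set $\theta^{-}_{\la,M}:=\inf\{J_{\la,M}(u): u\in \mc N_{\la,M}^{-}\}$, which is finite since $J_{\la,M}$ is bounded below on all of $\mc N_{\la,M}$ (Theorem \ref{zth1}). I would apply Ekeland's variational principle on $\mc N_{\la,M}^{-}$ to produce a sequence $\{u_k\}\subset \mc N_{\la,M}^{-}$ with $J_{\la,M}(u_k)\to \theta^{-}_{\la,M}$ and $J_{\la,M}(v)\ge J_{\la,M}(u_k)-\frac{1}{k}\|v-u_k\|$ for every $v\in\mc N_{\la,M}^{-}$. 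Using Lemma \ref{zle32} (the $\mc N^{-}$-version of the implicit function construction in Lemma \ref{zle31}) and repeating verbatim the argument of Proposition \ref{zpro1}, I would obtain $\|J_{\la,M}^{\prime}(u_k)\|_{*}\to 0$, the denominator $K$ in \eqref{zliminf3.4} being bounded away from zero now because each $u_k$ lies in $\mc N_{\la,M}^{-}$ and $\mc N_{\la,M}^{0}=\{0\}$ by Lemma \ref{zle4}.

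Next I would establish a uniform lower bound $\|u_k\|\ge C_0>0$. Indeed, $u_k\in\mc N_{\la,M}^{-}$ gives $\phi_{u_k,M}^{\prime\prime}(1)<0$, so
\[
(2n-1-q)a\|u_k\|^{2n}+(n-1-q)b\|u_k\|^{n}< \int_{\Om}g^{\prime}(u_k)u_k^{2}\,dx,
\]
and a Moser--Trudinger estimate on the right-hand side (as in Step~1 of Lemma \ref{zle3}) forces $\|u_k\|$ to stay away from $0$. Coercivity of $J_{\la,M}$ on $\mc N_{\la,M}$ (Theorem \ref{zth1}) yields an upper bound, so up to a subsequence $u_k\rightharpoonup u^{-}_{\la}$ in $W_{0}^{1,n}(\Om)$, $u_k\to u^{-}_{\la}$ in every $L^{\al}(\Om)$, and a.e. in $\Om$. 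Since $\ba<\frac{n}{n-1}$, the Moser--Trudinger embedding into $L^{1}(\Om)$ is compact, whence $\int_{\Om}h|u_k|^{q+1}\to \int_{\Om}h|u^{-}_{\la}|^{q+1}$ and $\int_{\Om}g(u_k)(u_k-u^{-}_{\la})\,dx\to 0$.

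Combining these convergences with $\ld J_{\la,M}^{\prime}(u_k),u_k-u^{-}_{\la}\rd\to 0$ gives
\[
m(\|u_k\|^n)\int_{\Om}|\na u_k|^{n-2}\na u_k(\na u_k-\na u^{-}_{\la})\,dx\to 0,
\]
and subtracting the analogous expression with $|\na u^{-}_{\la}|^{n-2}\na u^{-}_{\la}$ (which vanishes in the limit by weak convergence and the boundedness of $m(\|u_k\|^n)$) and invoking \eqref{11e2} together with $m(t)\ge m_0$ yields $u_k\to u^{-}_{\la}$ strongly in $W_{0}^{1,n}(\Om)$, exactly as in the proofs of Theorem \ref{thm711} and Lemma \ref{zle34}. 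Strong convergence passes $u^{-}_{\la}$ into $\mc N_{\la,M}$ and gives $J_{\la,M}(u^{-}_{\la})=\theta^{-}_{\la,M}$.

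The main obstacle is to rule out that the limit collapses into $\mc N_{\la,M}^{0}$ or slides to $\mc N_{\la,M}^{+}$. The uniform lower bound $\|u_k\|\ge C_0$ forces $u^{-}_{\la}\neq 0$; strong convergence gives $\phi_{u^{-}_{\la},M}^{\prime\prime}(1)=\lim_k\phi_{u_k,M}^{\prime\prime}(1)\le 0$; and Lemma \ref{zle4} ($\mc N_{\la,M}^{0}=\{0\}$) excludes equality. Hence $u^{-}_{\la}\in\mc N_{\la,M}^{-}$, which proves that $J_{\la,M}$ achieves its infimum on $\mc N_{\la,M}^{-}$. A standard replacement by $|u^{-}_{\la}|$ via $t_{2}(|u^{-}_{\la}|)|u^{-}_{\la}|\in \mc N_{\la,M}^{-}$, together with the fact that $\psi_{u^{-}_{\la}}=\psi_{|u^{-}_{\la}|}$, would also yield that the minimizer can be chosen non-negative, giving the second solution of $(P_{\la,M})$ needed for Theorem \ref{zth11}.
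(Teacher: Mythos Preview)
Your proposal is correct and follows essentially the same route as the paper: Ekeland's principle on $\mc N_{\la,M}^{-}$, then Lemma~\ref{zle32} plus the argument of Proposition~\ref{zpro1} to kill $J_{\la,M}^{\prime}(u_k)$, followed by the subcritical compactness/strong-convergence step of Lemma~\ref{zle34}. Your write-up is in fact more explicit than the paper's---where the paper simply asserts that $\mc N_{\la,M}^{-}$ is closed (via continuity of $u\mapsto t^{-}(u)$) and that $v_k\in\La\setminus\{0\}$, you derive the uniform lower bound on $\|u_k\|$ directly from $\phi_{u_k,M}^{\prime\prime}(1)<0$ and Moser--Trudinger, and then use $\mc N_{\la,M}^{0}=\{0\}$ to exclude degeneration of the limit; this is the same mechanism, just unpacked.
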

\proof
We note that $\mc N_{\la,M}^{-}$ is a closed set, as $t^{-}(u)$
is a continuous function of $u$ and  ${J}_{\la,M}$ is bounded
below on $\mc N_{\la,M}^{-}$. Therefore, by Ekeland's Variational
principle, we can find a sequence $\{v_k\}\in \mc N_{\la,M}^{-}$ such
that
{\small\begin{align*}\label{zeq46}
{J}_{\la,M}(v_k)&\leq \inf_{u\in \mc
N_{\la,M}^{-}}{J}_{\la,M}(u) +\frac{1}{k},\;\;
{J}_{\la,M}(v)\geq{J}_{\la,M}(v_k)-\frac{1}{k}\|v-v_k\|
\;\mbox{for all}\; v\in \mc N_{\la,M}^{-}.
\end{align*}}
Then $\{v_k\}$ is a bounded sequence in $W^{1,n}_{0}(\Om)$ and
is easy to see that $v_k\in
\La\setminus \{0\}$. Thus by Lemma \ref{zle32}
and following the proof of Lemma \ref{zpro1}, we get
$\|{J}_{\la,M}^{\prime}(v_k)\|_{*}\ra 0$ as $k\ra \infty$. Thus
following the proof as in Lemma \ref{zle34}, we have $v_\la\in\mc N_{\la, M}^{-}$, weak
limit of sequence $\{v_k\}$, is a solution of $(P_{\la, M})$. And moreover $v_\la \not\equiv 0$, as $\mc
N^{0}_{\la,M}=\{0\}$.\QED

\noi{\bf Proof of Theorem \ref{zth11}: } Now the proof follows from Lemmas \ref{zle34} and \ref{zle03}.\QED

\noi To obtain the existence result in the critical case, we need the following compactness Lemma.
\begin{Lemma}\label{cpt}
Suppose $\{u_k\}$ be a sequence in $W^{1,n}_{0}(\Om)$ such that
\[ J_{\la,M}^{\prime}(u_k) \rightarrow 0\;\; J_{\la,M}(u_k) \rightarrow c < \frac{1}{2n}m_0\al_{n}^{n-1}- C \la^{\frac{p+2+\ba}{p+1-q+\ba}},\] where $C$ is a positive constant depending on $p$, $q$ and $n$.
Then there exists a strongly convergent subsequence.
\end{Lemma}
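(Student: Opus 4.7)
The strategy mirrors the concluding Claim~3 of the proof of Theorem~\ref{thm711} but adapted to the Nehari--manifold setting with the concave term $\la h(x)|u|^{q-1}u$ present and in the critical exponent regime $\ba=\frac{n}{n-1}$. The three main steps are: obtain a bounded subsequence with weak limit $u_\la$ satisfying the Euler equation; quantify the mass defect $\rho_0-\|u_\la\|^n$, where $\rho_0:=\lim_{k\ra\infty}\|u_k\|^n$, using the energy identity and the a priori lower bound $J_{\la,M}\geq -C\la^{k/(k-1)}$ from Theorem~\ref{zth1}; finally, apply Lions' higher integrability Lemma~\ref{plc} and the monotonicity inequality \eqref{11e2} to pass from weak to strong convergence.

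For the first step, boundedness of $\{u_k\}$ follows by taking the combination $J_{\la,M}(u_k)-\tfrac{1}{p+2}\ld J^{\prime}_{\la,M}(u_k),u_k\rd$ and exploiting $m(t)\geq m_0$, the convexity estimate $G(s)\leq\tfrac{1}{p+2}g(s)s$, and $q+1<n$, which yields an inequality $c+o(\|u_k\|)\geq C_1\|u_k\|^{2n}+C_2\|u_k\|^n-C_3\la\|u_k\|^{q+1}$ bounding $\|u_k\|$. Up to a subsequence, $u_k\rightharpoonup u_\la$ weakly in $W^{1,n}_0(\Om)$, $u_k\to u_\la$ strongly in $L^r(\Om)$ for every $r\geq1$ and a.e.\ in $\Om$, and $\|u_k\|^n\to\rho_0\geq\|u_\la\|^n$. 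Boundedness of $\int_\Om g(u_k)u_k\,dx$ (via $\ld J^{\prime}_{\la,M}(u_k),u_k\rd\to0$) together with Lemma~\ref{lc1} produces $g(u_k)\to g(u_\la)$ and $G(u_k)\to G(u_\la)$ in $L^1(\Om)$. Repeating the concentration--compactness argument of Lemma~\ref{lem714} verbatim gives $|\na u_k|^{n-2}\na u_k\rightharpoonup|\na u_\la|^{n-2}\na u_\la$ weakly in $(L^{n/(n-1)}(\Om))^n$, and passing to the limit in the weak formulation shows that $u_\la$ satisfies
\begin{equation*}
m(\rho_0)\int_\Om|\na u_\la|^{n-2}\na u_\la\cdot\na\phi\,dx=\int_\Om g(u_\la)\phi\,dx+\la\int_\Om h(x)|u_\la|^{q-1}u_\la\phi\,dx
\end{equation*}
for every $\phi\in W^{1,n}_0(\Om)$; hence $u_\la\equiv0$ or (once $m(\rho_0)=m(\|u_\la\|^n)$ is verified by the strong convergence established below) $u_\la\in\mc N_{\la,M}$.

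The core estimate is that the defect is strictly less than $\al_n^{n-1}$. Passing to the limit in $J_{\la,M}(u_k)$ using the $L^1$ convergence of $G(u_k)$ and the strong $L^r$ convergence of $u_k$ yields
\begin{equation*}
c-J_{\la,M}(u_\la)=\tfrac{1}{n}\bigl(M(\rho_0)-M(\|u_\la\|^n)\bigr).
\end{equation*}
Since $u_\la$ either vanishes or lies in $\mc N_{\la,M}$, Theorem~\ref{zth1} yields $J_{\la,M}(u_\la)\geq -C(p,q,n)\la^{k/(k-1)}$; combining with the hypothesis on $c$, assumption $(m1)$ (so that $M(\rho_0)\geq M(\rho_0-\|u_\la\|^n)+M(\|u_\la\|^n)$), and $m\geq m_0$ gives
\begin{equation*}
m_0\bigl(\rho_0-\|u_\la\|^n\bigr)\leq M(\rho_0-\|u_\la\|^n)\leq M(\rho_0)-M(\|u_\la\|^n)<\tfrac{1}{2}m_0\al_n^{n-1},
\end{equation*}
so $\rho_0-\|u_\la\|^n<\tfrac{1}{2}\al_n^{n-1}$. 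Setting $v_k=u_k/\|u_k\|$ and $v=u_\la/\rho_0^{1/n}$, we have $\rho_0(1-\|v\|^n)=\rho_0-\|u_\la\|^n$, whence Lemma~\ref{plc} (applied when $v\neq0$, or the Moser--Trudinger inequality \eqref{eqc002} directly when $v=0$) supplies some $\tau>1$ close to~$1$ with $\sup_k\int_\Om e^{\tau|u_k|^{n/(n-1)}}\,dx<\infty$.

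Finally, a triple H\"{o}lder inequality with this higher integrability, the boundedness of $\{u_k\}$ in $L^r$, and its strong $L^{r}$ convergence to $u_\la$ give $\int_\Om g(u_k)(u_k-u_\la)\,dx\to0$, while the concave term is controlled by $q+1<n$. From $\ld J^{\prime}_{\la,M}(u_k),u_k-u_\la\rd\to0$ we then deduce $m(\|u_k\|^n)\int_\Om|\na u_k|^{n-2}\na u_k\cdot(\na u_k-\na u_\la)\,dx\to0$; weak convergence of $u_k$ and boundedness of $m(\|u_k\|^n)$ provide the analogous statement with $|\na u_\la|^{n-2}\na u_\la$ in place of $|\na u_k|^{n-2}\na u_k$. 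Subtracting and invoking the monotonicity inequality \eqref{11e2} together with $m\geq m_0$ yields $u_k\to u_\la$ strongly in $W^{1,n}_0(\Om)$. The principal obstacle is the defect estimate in the preceding paragraph, where the precise form $\frac{1}{2n}m_0\al_n^{n-1}-C\la^{k/(k-1)}$ of the hypothesized threshold emerges from balancing the Lions/Moser--Trudinger exponent requirement against the Nehari--manifold lower bound on $J_{\la,M}(u_\la)$.
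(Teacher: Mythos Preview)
Your overall strategy---bound the sequence, extract a weak limit, control the mass defect, and then use higher integrability to upgrade to strong convergence---is sound, but it is \emph{not} the route the paper takes, and your execution has a genuine circularity at the key step.

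\textbf{How the paper argues.} The paper does not use Lions' Lemma~\ref{plc} here at all. Instead it runs a measure-theoretic concentration--compactness argument: pass to subsequential limits $|\na u_k|^n\to\mu_1$ and $g(u_k)u_k\to\mu_2$ in the sense of measures, show the singular set $A$ is finite, test $J_{\la,M}'(u_k)$ against localized cut-offs to obtain the dichotomy $\ba_i=0$ or $\ba_i\ge m_0\al_n^{n-1}$ for each atom, and then exclude the second alternative by computing $J_{\la,M}(u_k)-\tfrac12\langle J_{\la,M}'(u_k),u_k\rangle$ in the limit and comparing with the threshold $\tfrac{1}{2n}m_0\al_n^{n-1}-C\la^{k/(k-1)}$. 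Once all atoms vanish, strong convergence follows.

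\textbf{The gap in your argument.} You invoke Theorem~\ref{zth1} to get $J_{\la,M}(u_\la)\ge -C\la^{k/(k-1)}$, but that theorem applies only to points of $\mc N_{\la,M}$. You yourself flag this: ``once $m(\rho_0)=m(\|u_\la\|^n)$ is verified by the strong convergence established below''. That is precisely the conclusion you are trying to prove, so the appeal to Theorem~\ref{zth1} at this stage is circular. Without it, the chain $m_0(\rho_0-\|u_\la\|^n)\le M(\rho_0)-M(\|u_\la\|^n)<\tfrac12 m_0\al_n^{n-1}$ is unjustified.

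\textbf{How to repair it.} Do not estimate $J_{\la,M}(u_\la)$; instead pass to the limit in
\[
J_{\la,M}(u_k)-\tfrac{1}{2n}\langle J_{\la,M}'(u_k),u_k\rangle
=\tfrac{b}{2n}\|u_k\|^n+\int_\Om\Bigl(\tfrac{1}{2n}g(u_k)u_k-G(u_k)\Bigr)-\la\,\tfrac{2n-q-1}{2n(q+1)}\int_\Om h|u_k|^{q+1}
\]
(here one uses $m(t)=at+b$). The right-hand side converges, and the last two terms are bounded below by $-C\la^{k/(k-1)}$ via the \emph{pointwise} estimates \eqref{1m} and the H\"older inequality exactly as in the \emph{proof} (not the statement) of Theorem~\ref{zth1}---these do not require membership in $\mc N_{\la,M}$. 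This yields $c\ge\tfrac{b}{2n}\rho_0-C\la^{k/(k-1)}$, hence $\rho_0<\al_n^{n-1}$ since $m_0=b$, and then ordinary Trudinger--Moser (or Lions' lemma if you prefer) gives the uniform higher integrability you need. From there your final paragraph goes through unchanged.
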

\proof
By Lemma \ref{lem714}, there exists a subsequence $\{u_k\}$ of $\{u_k\}$ such that $u_k \rightarrow u$ in $L^\al(\Om)$ for all $\al$, $u_k(x) \rightarrow u(x)$ a.e. in $\Om$, $\na u_k(x) \rightarrow \na u(x)$ a.e. in $\Om$ and $|\na u_k|^{n-2}\na u_k \rightharpoonup |\na u|^{n-2} \na u$ weakly in $W^{1,n}_{0}(\Om)$. Now by concentration compactness lemma, $|\na u_k|^n \rightarrow \mu_1$, $g(u_k)u_k \rightarrow \mu_2$ in measure.

\noi Let $B=\{ x\in \overline{\Om}: \; \exists\; r=r(x),\;
\mu_1(\textbf{B}_r\cap \Omega) < (\al_n)^{n-1}\}$ and let
$A=\overline{\Om}\backslash B.$ Then as in Lemma \ref{lem714}, we
can show that $A$ is finite set say $\{x_1, x_2,...x_m\}.$ Since
$J_{\la,M}^{\prime} (u_k)\rightarrow 0$, we have
{\small\begin{align}
0=&\lim_{k\ra\infty} \langle J^{\prime}_{\la,M}(u_k), \phi\rangle = \lim_{k\ra\infty} m (\|u_k\|^n) \int_\Om |\na u_k|^{n-2} \na u_k \na \phi -\la \int_\Om |u_k|^{q-1} u_k \phi- \int_\Om g(u_k) \phi \label{a1}\\
0=&\lim_{k\ra\infty}\langle J^{\prime}_{\la,M}(u_k), u_k \phi\rangle = \lim_{k\ra\infty}m(\|u_k\|^n) \left(\int_\Om |\na u_k|^{n-2} \na u_k \na \phi u_k + \int_\Om |\na u_k|^n  \phi \right) \notag\\ &\hspace*{4cm}-\la\int_\Om |u|^{q+1}\phi- \lim_{k\ra\infty}\int_\Om g(u_k) u_k \phi \label{a2}\\
0=&\lim_{k\ra\infty} \langle J^{\prime}_{\la,M} (u_k), u\phi\rangle= \lim_{k\ra\infty} m(\|u_k\|^n)\left(\int_\Om |\na u_k|^{n-2} \na u_k \na u\phi+\int_\Om |\na u_k|^{n-2}\na u_k \na \phi u\right)\notag\\
&\hspace*{4cm}-\la \int_\Om |u|^{q+1} \phi -\int_{\Om} g(u) u \phi
\label{a3}
\end{align}}
Substituting \eqref{a3} in \eqref{a2}, we have
{\small \begin{equation}\label{a4}
\int_\Om g(u_k) u_k \phi = m(\|u_k\|^n)\int_\Om( |\na u_k|^n -  |\na u_k|^{n-2} \na u_k \na u) \phi +\int_\Om g(u) u \phi
\end{equation}}
Now take cut-off function $\psi_\de \in C^{\infty}_{0}(\Om)$ such that $\psi_\de(x)\equiv1$ in $\textbf{B}_\de(x_j),$ and $\psi_\de (x)\equiv0$ in $\textbf{B}_{2\de}^{c}(x_j)$ with $|\psi_\de|\le 1$. Then taking $\phi=\psi_\de$,
{\small\[0\le \left|\int_\Om |\na u_k|^{n-2} \na u_k \na u \phi\right| \le \left(\int_{\Om} |\na u_k|^{n}\right)^{(n-1)/n} \left(\int_{\textbf{B}_{2\de}} |\na u|^n\right)^{1/n} \rightarrow 0 \;\; \text{as}\;\; \de \rightarrow 0.\]}
Hence from \eqref{a4}, we get
\begin{equation}\label{a5}
\int_\Om \phi d \mu_2 \ge m_0 \int_\Om \phi d\mu_1 + \int_\Om g(u)u \phi\;\mbox{as}\; \de\ra 0.
\end{equation}
Now as in Lemma \ref{lem714}, we can show that for any relatively compact set $K\subset \Om_\e$, where $\Om_\e = \Om\backslash \cup_{i=1}^{m} \textbf{B}_\de(x_i)$
\[\lim_{k\rightarrow \infty} \int_K g(u_k)u_k \rightarrow \int_K g(u) u.\]
Also taking  $0<\e<\e_0$ and $\phi\in C^\infty_c(\mb R^n)$ such that
$\phi\equiv1$ in $\textbf{B}_{1/2}(0)$ and $\phi\equiv0$ in $\bar\Om\setminus \textbf{B}_1(0)$.
Take $\psi_\e=1-\ds\sum_{j=1}^m\phi\left(\frac{x-x_j}{\e}\right)$ in \eqref{a5}. Then $0\leq\psi_\e\leq 1$, $\psi_\e\equiv 1$ in
 $\bar\Om_\e=\bar\Om\setminus\cup_{j=1}^m \textbf{B}_\e(x_j)$,$\psi_\e\equiv0$ in $\cup_{j=1}^m \textbf{B}_{\e/2}(x_j)$
{\small\begin{align*}
\int_\Om \psi_\e d \mu_2 & =\lim_{\e\rightarrow 0}\left(\int_{\Om_\e} \psi_\e d \mu_2 + \sum_{i=1}^{m} \int_{\textbf{B}_{\e}\cap \Om} \psi_\e d\mu_2\right)
=\lim_{\e \rightarrow 0}\int_{\Om_\e} g(u) u \psi_\e + \sum_{i=1}^{m} \ba_i \de_{x_i}\\
&=\int_\Om g(u) u +\sum_{i=1}^{m} \ba_i \de_{x_i}.
\end{align*}}
Therefore, from \eqref{a5}, we get
\begin{equation}\label{a6}
m_0 \int_\Om \psi_\e d\mu_1 \le \sum_{i=1}^{m} \ba_i \de_{x_i}.
\end{equation}
Now choosing $\e \rightarrow 0$, we get
\[m_0\mu_1(A) \le \sum_{i=1}^{m} \ba_i .\]
Therefore from the definition of $A$, either $\ba_i=0$ or $\ba_i \ge m_0 (\al_n)^{n-1}$.
Now we will show that $\ba_i=0,$ for all $i$. Suppose not, Now using
$J_{\la,M}(u_k)\rightarrow c$ implies {\small\begin{align*}
nc= &J_{\la,M}(u_k)- \frac{1}{2} \langle J_{\la,M}^{\prime} (u_k)u_k\rangle \\
 = &\left(M(\|u_k\|^n)-\frac12 m(\|u_k\|^n)\|u_k\|^n\right)+ \int_\Om \left(\frac12 g(u_k) u_k -n G(u_k)\right)
 \\
 &+\la\left(\frac{1}{2}-\frac{n}{q+1}\right)\int_\Om h |u|^{q+1}\\
\ge & \frac{ m_0(\al_n)^{n-1}}{2}+\int_\Om \left(\frac{1}{2} g(u)u-nG(u)\right) + \la\left(\frac{1}{2}-\frac{n}{q+1}\right)   \int_\Om h|u|^{q+1}.
\end{align*}}
\noi Then using equation \eqref{1m}, we have
\begin{align*}
c\ge &\frac{1}{2n}m_0(\al_n)^{n-1} + \left(\frac{1}{2n}-\frac{1}{p+2}\right)\int_{\Om} |u|^{p+2+\ba} + \la \left(\frac{1}{2n}-\frac{1}{q+1}\right)\int_\Om h |u|^{q+1}\\
\geq & \frac{1}{2n}m_0(\al_n)^{n-1} +
\left(\frac{1}{2n}-\frac{1}{p+2}\right)\int_{\Om} |u|^{(q+1)k} -
\frac{\la(2n-1-q)l^{\frac{k-1}{k}}}{2n(q+1)}\left(\int_\Om
|u|^{(q+1)k}\right)^{\frac{1}{k}},
\end{align*}
where $k=\frac{p+1+\ba}{q+1}$. Now as in Theorem \ref{zth1}, consider the global minimum of the function $\rho(x): \mb R^+
\lra \mb R$ defines as \\ $$\rho(x)=
\left(\frac{1}{2n}-\frac{1}{p+2}\right)x^k -
\left(\frac{\la(2n-q-1)l^{\frac{k-1}{k}}}{2n(q+1)}\right) x.$$
Then it can be shown that $\rho$ attains its minimum value at $x=\left(\frac{\la(2n-q-1)(p+2)l^{\frac{k-1}{k}}}{k(q+1)(p+2-2n)}\right)^{\frac{1}{k-1}}$
and its minimum value is  {\small$-C(p,q,n)\la^{\frac{k}{k-1}}$, where $C(p,q,n)=
\left(\frac{1}{k^{\frac{1}{k-1}}}-\frac{1}{k^{\frac{k}{k-1}}}\right)
\frac{l(p+2)^{\frac{1}{k-1}}(2n-q-1)^{\frac{k}{k-1}}}{2n(p+2-2n)^{\frac{1}{k-1}}(q+1)^{\frac{k}{k-1}}}>0.$}
Therefore, $c\ge \frac{1}{2n}m_0(\al_n)^{n-1} - C(p,q,n)\la^{\frac{p+2+\ba}{p+1-q+\ba}}$ .\QED


\noi Let $\la_{00}=\max \{ \la: \; \theta_{\la,M} \le \frac{1}{2n}m_0\al_{n}^{n-1}- C \la^{\frac{p+2+\ba}{p+1-q+\ba}}\}$ where $C$ is as in the above Lemma.\\

\noi{\bf Proof of Theorem \ref{zth12}:} Let $\{u_k\}$ be a minimizing sequence for $J_{\la,M}$ on $\mc
N_{\la,M}\setminus \{0\}$ satisfying (\ref{zeq33}).
Then it is easy to see that $\{u_k\}$ is a bounded sequence in $W_{0}^{1,n}(\Om)$.
Also there exists a subsequence of $\{u_k\}$ (still denoted by
$\{u_k\}$) and a function $u_\la$ such that $u_k \rightharpoonup
u_\la$ weakly in $W_{0}^{1,n}(\Om)$, $u_k\ra u_\la$ strongly in
$L^{\al}(\Om)$ for all $\al\geq 1$ and $u_k(x)\ra u_{\la}(x)$ a.e in
$\Om$. Then by Lemma \ref{zpro1}, we have $J_{\la,M}^{\prime}(u_k-u_\la) \ra 0$.

Now by compactness Lemma \ref{cpt}, $u_k \rightarrow u_\la$ strongly in $W^{1,n}_{0}(\Om)$ and hence $\|u_k\|\rightarrow \|u_\la\|$ strongly as $k\rightarrow \infty$. In particular, it follows that $u_\la$ solves $(P_{\la, M})$ and hence $u_\la \in \mc N_{\la, M}.$ Also we can show similarly as in Lemma \ref{zle34} and Theorem \ref{zth3.6} that $u_{\la} \in \mc N_{\la,M}^{+}\cap H^{+}$ is a non-negative local minimizer of $J_{\la,M}$ in $W^{1,n}_{0}(\Om)$.\QED

\noindent{\bf Acknowledgements:} Third author's research is supported by National Board for Higher Mathematics, Govt. of India, grant number: 2/48(12)/2012/NBHM(R.P.)/R$\&$D II/13095.

\end{document}